%
%
%

\documentclass{amsart}
\usepackage{hyperref}
\usepackage{texdraw}
\usepackage{color}
\usepackage{enumitem}
\usepackage[pdftex]{graphicx}

\setcounter{tocdepth}{1}

\makeatletter

\newfont{\gothic}{eufm10 scaled 1100}

\theoremstyle{plain}    
\newtheorem{thm}{Theorem}[section]
\numberwithin{equation}{section} 
\numberwithin{figure}{section} 
\theoremstyle{plain}    
\theoremstyle{plain}    
\newtheorem{conj}[thm]{Conjecture} 
\theoremstyle{plain}    
\theoremstyle{plain}
\newtheorem{lem}[thm]{Lemma} 
\theoremstyle{plain}    
\newtheorem{prop}[thm]{Proposition} 
\theoremstyle{plain}    
\newtheorem{Def}[thm]{Definition} 
\theoremstyle{plain}    
\newtheorem{const}[thm]{Construction}
\theoremstyle{remark}
\newtheorem{rem}[thm]{Remark}
\theoremstyle{remark}    

\theoremstyle{remark}


\sloppy
\binoppenalty=10000
\relpenalty=10000

\usepackage[arrow,matrix,curve,ps]{xy}
\xyoption{dvips}
\CompileMatrices

\makeatother

\begin{document}

\title{Ciliberto-Miranda degenerations of $\mathbb{CP}^2$ blown up in $10$ points}

\date{\today}

\author{Thomas Eckl}

\keywords{Seshadri constants, Nagata conjecture}

\subjclass{14J26, 14C20}


\address{Thomas Eckl, Department of Mathematical Sciences, The University of Liverpool, Mathematical
               Sciences Building, Liverpool, L69 7ZL, England, U.K.}

\email{thomas.eckl@liv.ac.uk}

\urladdr{http://pcwww.liv.ac.uk/~eckl/}

\maketitle

\begin{abstract}
We simplify Ciliberto's and Miranda's method~\cite{CM08} to construct degenerations of
$\mathbb{CP}^2$
blown up in several points yielding lower bounds of the corresponding multi-point Seshadri constants. In particular we exploit 
an asymptotic result of \cite{Eck08} which allows to check the non-specialty of much fewer linear systems on 
$\mathbb{CP}^2$.
We obtain the lower bound $\frac{117}{370}$ for the $10$-point Seshadri constant on $\mathbb{CP}^2$.
\end{abstract}


\pagestyle{myheadings}
\markboth{THOMAS ECKL}{CILIBERTO-MIRANDA DEGENERATIONS}

\setcounter{section}{-1}

\section{Introduction}

\noindent 
\begin{conj}[Nagata, \cite{Nag59}]
Let $p_1, \ldots, p_n$ be $n \geq 10$ points on $\mathbb{CP}^2$ in general position, and let $C$ be an irreducible curve of
degree $d$ on $\mathbb{P}^2$, passing with multiplicity $m_i$ through the point $p_i$. Then
\[ d > \sqrt{n} \sum_{i=1}^n m_i. \]
\end{conj}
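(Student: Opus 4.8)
The plan is to recast the inequality as a lower bound on the multi-point Seshadri constant and then to certify that bound through a degeneration of Ciliberto--Miranda type \cite{CM08}. Writing $X=\mathrm{Bl}_{p_1,\dots,p_n}\mathbb{P}^2$ with hyperplane class $H$ and exceptional divisors $E_1,\dots,E_n$, the assertion is equivalent to the estimate $\varepsilon(n):=\inf_C\frac{\deg C}{\sum_i\operatorname{mult}_{p_i}C}\geq\frac{1}{\sqrt n}$, the infimum taken over irreducible curves through very general $p_i$; the reverse bound $\varepsilon(n)\leq\frac{1}{\sqrt n}$ is automatic from a Riemann--Roch dimension count, so this is the whole content. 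First I would reduce to the \emph{homogeneous} case $m_1=\dots=m_n=m$. Since the points are very general, their configuration is $S_n$-symmetric under monodromy, so for each permutation $\sigma$ there is a curve $C_\sigma$ of degree $d$ with $\operatorname{mult}_{p_i}C_\sigma\geq m_{\sigma(i)}$; summing the $n!$ curves $C_\sigma$ produces an effective $\mathbb{Q}$-divisor of class proportional to $d\,H-\big(\tfrac1n\sum_i m_i\big)\sum_j E_j$ with the \emph{same} ratio $\deg/\!\sum\operatorname{mult}$. Hence it suffices to show that $\mathcal L_d(m^n):=|d\,H-m\sum_j E_j|$ is empty whenever $d\leq\sqrt n\,m$.

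Next I would degenerate. I take a flat family $\mathcal X\to\Delta$ over a disc with general fibre $\mathbb{P}^2$ and reducible central fibre $X_0=V_1\cup_R V_2$, two rational surfaces glued transversally along a curve $R$, distribute the $n$ sections of marked points as $n_1+n_2=n$ between the components, and blow up the total space along these sections and, where required, along $R$. The system $\mathcal L_d(m^n)$ then specializes to a limit linear system $\mathcal L_0$ on $X_0$, and by semicontinuity of $h^0$ in the flat family, $\mathcal L_0=\emptyset$ forces $\mathcal L_d(m^n)=\emptyset$ on the general fibre. A Mayer--Vietoris and twisting-down analysis along $R$ reduces the vanishing $h^0(X_0,\mathcal L_0)=0$ to the statement that a finite collection of systems of the same shape $\mathcal L_{d_i}(m^{n_i})$ on each component, together with auxiliary Cremona and blow-up reductions, are \emph{non-special}, i.e.\ attain their expected dimension.

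Here the asymptotic result of \cite{Eck08} is the engine: rather than checking an infinite family indexed by $(d,m)$, it collapses the non-speciality verification to far fewer, explicit linear systems that can be handled case by case. Iterating the construction---degenerating the components further into chains or towers of surfaces so as to balance the point distribution toward the zero-self-intersection locus $d^2=nm^2$---drives the certified value of the ratio steadily upward toward $\frac{1}{\sqrt n}$.

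The \textbf{main obstacle}, and the reason Nagata's conjecture remains open for non-square $n$, is the passage to the sharp constant. Each individual degeneration, even with the \cite{Eck08} shortcut, produces only a \emph{rational} lower bound strictly below $\frac{1}{\sqrt n}$; for $n=10$ the bound reached here, $\frac{117}{370}$, agrees with $\frac{1}{\sqrt{10}}=0.31623\dots$ to four decimal places yet is strictly smaller. Closing the gap demands controlling non-speciality \emph{uniformly} along the entire infinite tower of degenerations---equivalently, excluding any unexpected special curve that might persist in the limit---and no such uniform certificate is presently available. I expect this to be exactly where the difficulty concentrates: the combinatorics of each finite stage is tractable by the method above, but proving that the limit of the rational bounds equals $\frac{1}{\sqrt n}$ would require either a uniform non-speciality theorem for the whole family of central fibres, or an entirely new positivity argument forcing $d>\sqrt n\,m$ directly from $d^2\geq nm^2$.
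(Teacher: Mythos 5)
The statement you are asked to prove is Nagata's conjecture itself, which the paper states only as motivation and does not prove; it is still open for ten (indeed for any non-square number of) points. What the paper actually establishes is the strictly weaker bound $\epsilon(\mathbb{P}^2,\mathcal{O}_{\mathbb{P}^2}(1);p_1,\ldots,p_{10})\geq \frac{117}{370}$, and $\frac{117}{370}<\frac{1}{\sqrt{10}}$. Your proposal is therefore not a proof, and to your credit you say so explicitly: the final paragraph correctly identifies that each finite stage of the degeneration tower certifies only a rational bound strictly below $\frac{1}{\sqrt{n}}$, and that no uniform non-speciality statement along the whole tower is available. That is precisely the gap, it is the whole content of the conjecture, and nothing in the paper closes it. So the honest verdict is: the strategy you describe is essentially the paper's strategy, but neither you nor the paper proves the stated inequality.

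Two smaller points of comparison are worth recording. First, your reduction to the homogeneous case and your plan to show \emph{emptiness} of $\mathcal{L}(d;m^n)$ for $d\leq\sqrt{n}\,m$ via upper semicontinuity of $h^0$ is the classical route to Nagata; the paper instead runs the degeneration through $h^1$: it proves \emph{non-specialty together with non-emptiness} of systems $\mathcal{L}(d;m^n)$ with $\frac{d}{m}$ slightly above a rational $\mu>\sqrt{n}$ (Proposition~\ref{degen-prop}, the Gluing Lemma~\ref{glue-lem}, Harbourne's Criterion~\ref{Har-crit}, and the throws of Construction~\ref{throw-const}), and then invokes Theorem~\ref{Approx-thm} to convert the asymptotic ratio $\frac{d_i}{m_i}\to\mu$ into nefness of $H-\frac{1}{\mu}\sum E_j$, i.e.\ the bound $\frac{1}{\mu}$ on the Seshadri constant. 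These are not interchangeable in practice: for $d>\sqrt{n}\,m$ the systems are non-empty by Riemann--Roch, so the $h^0=0$ route gives nothing there, and all of the paper's vanishing technology (Theorem~\ref{curvevan-crit}, Proposition~\ref{negexc-prop}) is built for $H^1$. Second, your description of the role of \cite{Eck08} is slightly off: it is not a device for reducing the number of non-speciality checks at a fixed stage, but the asymptotic statement that lets one pass from a sequence of non-special non-empty systems with $\frac{d_i}{m_i}\to\mu$ to nefness of the limiting $\mathbb{R}$-divisor, thereby avoiding the delicate analysis of systems of small expected dimension. Neither point rescues the proof of the conjecture; the terminal obstacle you name is real and is exactly where the paper stops.
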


\noindent Cast in the language of Seshadri constants, Nagata claimed in effect that
$\frac{1}{\sqrt{n}}$ is the multi-point Seshadri constant of $p_1, \ldots, p_n \in \mathbb{CP}^2$, for the line
bundle $\mathcal{O}_{\mathbb{P}^2}(1)$, or equivalently, that 
\[ H - \sqrt{\frac{1}{n}} \sum_{j=1}^n E_j \]
is a nef $\mathbb{R}$-divisor on $\widetilde{X} = \mathrm{Bl}_n(\mathbb{P}^2)$, the blowup of $\mathbb{P}^2$ in $n$ points,
where $H$ is the pullback of a line in $\mathbb{P}^2$ and $E_j$ are the exceptional divisors over the blown up 
points.

\noindent The best known bounds for the Seshadri constant of $10$ points on $\mathbb{P}^2$ until very recently were 
$\frac{6}{19}$ by Biran~\cite{Bir99} and $\frac{177}{560}$ by Harbourne and Ro\'e~\cite{HR03}. Some months ago, Ciliberto
and Miranda~\cite{CM08} presented a new method to improve these bounds, and obtained $\frac{55}{174}$. 

\noindent Their approach relies on the well known fact that Nagata's conjecture can be deduced from another conjecture on the
dimension of linear systems on $\mathbb{CP}^2$ (see e.g. \cite{CilMir01}):
\begin{conj}[Harbourne-Gimigliano-Hirschowitz \cite{Har85b, Gim87, Hir89}]
Let $p_1, \ldots, p_n$ be $n$ points on $\mathbb{CP}^2$ in general position, and let 
$\pi: X \rightarrow \mathbb{CP}^2$ be the blow up of these $n$ points. Furthermore, call $H$ the divisor class of a line on
$\mathbb{CP}^2$, and denote the exceptional divisor over $p_i$ with $E_i$. Given a degree $d$ and $n$ multiplicities 
$m_1, \ldots, m_n$, the linear system $| d \pi^\ast H - \sum_{i=1}^n m_i E_i |$ has the expected dimension
\[ \max (-1, \frac{d(d+3)}{2} - \sum_{i=1}^n \frac{m_i(m_i+1)}{2}) \]
iff there exists no $(-1)$-curve $C$ on $X$ such that 
\[ C . (d \pi^\ast H - \sum_{i=1}^n m_i E_i) \leq -2. \] 
\end{conj}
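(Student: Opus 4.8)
The statement is an equivalence, so I would prove the two implications separately. Write $D = d\,\pi^\ast H - \sum_{i=1}^n m_i E_i$ and let $v(D) = \frac{d(d+3)}{2} - \sum_{i=1}^n \frac{m_i(m_i+1)}{2}$ be its virtual dimension. Riemann--Roch on the rational surface $X$ gives $\chi(\mathcal{O}_X(D)) = 1 + v(D)$, and since $K_X - D$ meets the nef class $H$ negatively for $d \geq 0$ we have $h^2(\mathcal{O}_X(D)) = h^0(\mathcal{O}_X(K_X - D)) = 0$; thus in the range $v(D) \geq -1$, where the expected dimension equals $v(D)$, non-speciality of $|D|$ is equivalent to the vanishing $h^1(\mathcal{O}_X(D)) = 0$. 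The implication ``expected dimension $\Rightarrow$ no such $(-1)$-curve'' is elementary, whereas the converse is the substantial and, in full generality, still open direction.

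For the elementary direction I argue by contraposition. Suppose there is a $(-1)$-curve $C$ with $C\cdot D \leq -2$. As $C$ is irreducible with $C\cdot D < 0$, it is a fixed component of $|D|$, and the restriction sequence
\[ 0 \longrightarrow \mathcal{O}_X(D - C) \longrightarrow \mathcal{O}_X(D) \longrightarrow \mathcal{O}_C(D) \longrightarrow 0, \]
together with $\deg_C \mathcal{O}_C(D) = C\cdot D < 0$ and hence $h^0(\mathcal{O}_C(D)) = 0$, gives $h^0(\mathcal{O}_X(D)) = h^0(\mathcal{O}_X(D-C))$, so $\dim|D| = \dim|D - C|$. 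Adjunction for the smooth rational curve $C$ yields $C^2 = C\cdot K_X = -1$, whence a short computation gives
\[ v(D - C) = v(D) - C\cdot D - 1 \geq v(D) + 1. \]
Since $h^2(\mathcal{O}_X(D-C)) = 0$ we have $\dim|D-C| \geq v(D-C)$, and therefore $\dim|D| \geq v(D) + 1 > v(D)$; in the range $v(D) \geq -1$ this already exceeds the expected dimension, so $|D|$ is special. In the complementary range $v(D) < -1$ the same mechanism, applied by removing $C$ with its full multiplicity $-C\cdot D$, produces $h^1(\mathcal{O}_X(D)) > 0$, which is the appropriate notion of speciality there. This establishes ``expected dimension $\Rightarrow$ no bad $(-1)$-curve.''

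The reverse direction --- that the absence of any $(-1)$-curve $C$ with $C\cdot D \leq -2$ forces $|D|$ to have the expected dimension --- is the heart of the Harbourne--Gimigliano--Hirschowitz conjecture and is the step I expect to be the real obstacle. Here my plan follows the degeneration philosophy of the present paper: degenerate $\mathbb{P}^2$ to a reducible surface, a union of two components meeting along a line as in Ciliberto--Miranda~\cite{CM08}, distribute the $n$ points between the components, and analyse the flat limit of $|D|$, which breaks into linear systems on the two components subject to matching conditions along the double curve. By upper semicontinuity of $h^0$ in the flat family, non-speciality of the limit system implies non-speciality of the general member, so it suffices to control the limit. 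I would organise this as a double induction on the degree $d$ and the number of points $n$, combining the degeneration with a Horace-type restriction argument in the spirit of Alexander--Hirschowitz to peel off points and reduce each case to strictly smaller ones, using the hypothesis ``no bad $(-1)$-curve'' precisely to exclude the obstructions at the base of the recursion. The genuine difficulty is to keep the limit system non-special throughout --- controlling the kernel and transversal systems on the two components and ruling out unexpected sections surviving in the limit --- and then to discharge the finitely many small non-speciality checks at the base. In full generality this remains unresolved; the realistic target, pursued in this paper for $n = 10$, is to carry out the degeneration and to reduce the number of required checks by invoking the asymptotic non-speciality result of~\cite{Eck08}, thereby extracting the Seshadri bound $\frac{117}{370}$.
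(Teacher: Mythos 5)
There is a fundamental mismatch here: the statement you are asked about is labelled a \emph{Conjecture} in the paper (the Harbourne--Gimigliano--Hirschowitz conjecture), and the paper neither proves it nor claims to --- it is a famous open problem, and the entire point of the paper is to extract partial, asymptotic information (a lower bound on a Seshadri constant) from degeneration techniques precisely because the conjecture is out of reach. So there is no ``paper's own proof'' to compare against, and your proposal, by your own admission, does not prove the statement either: it establishes only the elementary implication and then sketches a research programme for the converse. A sketch of the Ciliberto--Miranda/Horace degeneration strategy is not a proof; indeed the paper documents in Section~3 exactly where that programme stalls (bad $(-1)$-curves reappearing on components of the limit, the need for iterated throws of growing complexity, no termination guarantee), and even for $n=10$ it yields only the bound $\frac{117}{370}$, not the conjecture.

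Two remarks on the part you do argue. First, your easy direction is essentially the mechanism of Lemma~\ref{spec-lem} of the paper (a $(-1)$-curve $E$ with $L.E\leq -2$ on an \emph{effective} $L$ forces $h^1\neq 0$), and the Riemann--Roch and $h^2$-vanishing computations are correct. But your contrapositive argument assumes $|D|\neq\emptyset$ in order to say $C$ is a fixed component; when $h^0(D)=0$ and $v(D)\leq -1$ the system has the expected dimension $-1$ as literally defined in the statement, yet bad $(-1)$-curves can exist (e.g.\ $D=-2E_1$ and $C=\pi^\ast H-E_1-E_2$), so with the paper's formulation even the ``only if'' direction needs an effectivity or $v(D)\geq -1$ hypothesis, or the convention that speciality means $h^0\cdot h^1\neq 0$. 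Second, the step you defer --- ``keep the limit system non-special throughout'' --- is not a technical loose end but the entire content of the conjecture; no amount of double induction on $(d,n)$ is currently known to close it. The honest conclusion is that the statement cannot be proved by the methods proposed, and the paper does not attempt to do so.
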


\noindent Linear systems on $\mathbb{P}^2$ are often analysed via degenerations: If the degenerated linear system on 
the central fiber of the $\mathbb{P}^2$-degeneration has expected dimension, then nearby fibers inherit this property by
semi-continuity. In~\cite{CilMir98} Ciliberto and Miranda use a degeneration of $\mathbb{P}^2$ into a union of $\mathbb{P}^2$
and the
first Hirzebruch surface $\mathbb{F}_1$ to check the Harbourne-Hirschowitz conjecture in a number of cases. Unfortunately,
for Nagata's conjecture the results do not yield interesting bounds for Seshadri constants. The failure is due to $(-1)$-curves
which intersect the degenerated linear systems negatively.

\noindent In~\cite{CM08} Ciliberto and Miranda observe that the normal bundle of these "bad" $(-1)$-curves is negative. Their
new idea is to flop these curves, possibly after some blow ups, thus removing them from a new degeneration. Iterating
these flops of "bad" curves Ciliberto and Miranda obtain $\frac{55}{174}$ as a lower bound for the Seshadri constant of $10$
points on $\mathbb{P}^2$.

\noindent The main technical difficulty in their calculations is the study of linear systems with small expected dimension.
They require an intricate case-by-case analysis. To avoid this as much as possible this paper uses an approximative
approach to Nagata's conjecture developped in~\cite{Eck08}:
\begin{thm} [\cite{Eck08}] \label{Approx-thm}
Let $p_1, \ldots, p_n$ be $n \geq 10$ points on $\mathbb{CP}^2$ in general position, and let 
$\pi: X \rightarrow \mathbb{CP}^2$ be the blow up of these $n$ points. If $(d_i, m_i)$, $i \in \mathbb{N}$, is a sequence of 
integer pairs, such that the linear system $|d_i \pi^\ast H - m_i \sum_{j=1}^n E_j|$ is non-empty of expected dimension, and 
$\frac{d_i}{m_i} \stackrel{i \rightarrow \infty}{\longrightarrow} \frac{a}{\sqrt{n}}$ then
the $\mathbb{R}$-divisor
\[ \pi^\ast H - \frac{a}{\sqrt{n}} \sum_{j=1}^n E_j \] 
is nef on $X$.
\end{thm}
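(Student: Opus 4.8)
The plan is to check nefness of $L := \pi^\ast H - \frac{a}{\sqrt{n}}\sum_{j=1}^n E_j$ directly, by showing that $L\cdot C\geq 0$ for every irreducible curve $C$ on $X$. The exceptional curves are immediate, since $L\cdot E_j = \frac{a}{\sqrt{n}} > 0$, so I would reduce at once to a curve $C$ whose image in $\mathbb{CP}^2$ has degree $e\geq 1$; writing $C = e\,\pi^\ast H - \sum_{j=1}^n f_j E_j$ with all $f_j\geq 0$, the one thing to prove is $e\geq\frac{a}{\sqrt{n}}\sum_j f_j$. The idea is to read this off from the effective divisors $D_i := d_i\,\pi^\ast H - m_i\sum_{j=1}^n E_j$ by intersecting with $C$ and letting $i\to\infty$.

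The technical core is a lower bound $D_i\cdot C\geq -1$ that is \emph{uniform} in $i$ (for all $i$ large). If $C$ is not a fixed component of $|D_i|$, then a general member meets it properly and $D_i\cdot C\geq 0$. The one case that needs the hypotheses is when $C$ sits in the base locus. Here I would exploit that $|D_i|$ is non-special: since $C$ is a fixed component, $\dim|D_i| = \dim|D_i - C|$, while Riemann--Roch gives the lower bound $\dim|D_i - C|\geq\chi(\mathcal{O}_X(D_i - C)) - 1$. The group $h^2(D_i - C) = h^0(K_X - D_i + C)$ vanishes as soon as $d_i$ is large, because $(K_X - D_i + C)\cdot\pi^\ast H = e - d_i - 3 < 0$ and $\pi^\ast H$ is nef. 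Feeding in the expected-dimension hypothesis $\dim|D_i| = \chi(\mathcal{O}_X(D_i)) - 1$ yields $\chi(\mathcal{O}_X(D_i))\geq\chi(\mathcal{O}_X(D_i - C))$, and by adjunction this difference equals $D_i\cdot C - p_a(C) + 1$. Hence $D_i\cdot C\geq p_a(C) - 1\geq -1$, the arithmetic genus of an irreducible curve being non-negative.

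With this bound I would conclude by a limit. From $D_i\cdot C = d_i e - m_i\sum_j f_j\geq -1$ I get $\sum_j f_j\leq\frac{d_i e + 1}{m_i}$, so
\[ L\cdot C = e - \frac{a}{\sqrt{n}}\sum_{j=1}^n f_j\geq e - \frac{a}{\sqrt{n}}\cdot\frac{d_i e + 1}{m_i} = e\Bigl(1 - \frac{a}{\sqrt{n}}\,\frac{d_i}{m_i}\Bigr) - \frac{a}{\sqrt{n}\,m_i}. \]
As $i\to\infty$ the ratio hypothesis forces $\frac{a}{\sqrt{n}}\cdot\frac{d_i}{m_i}\to 1$ while $\frac{1}{m_i}\to 0$, so the right-hand side tends to $0$ and $L\cdot C\geq 0$. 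Since $C$ was arbitrary, $L$ is nef.

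The main obstacle is exactly the base-locus case of the uniform bound. Plain non-emptiness of $|D_i|$ only controls curves that are not fixed components, and a negative curve lodged in the base locus of every $D_i$ would wreck the naive estimate; it is the non-specialty (expected dimension) that forbids this, converting a potentially very negative intersection number into the harmless $O(1)$ bound $D_i\cdot C\geq p_a(C) - 1$. Two points deserve care: the bound must be independent of $i$, since that is what lets it be absorbed after dividing by $m_i\to\infty$; and the ratio must be normalised so that $\frac{a}{\sqrt{n}}\cdot\frac{d_i}{m_i}\to 1$ in the limit, which is where the proximity of $a$ to the Nagata value enters. The gain over the case-by-case analysis is that one never identifies the obstructing curves explicitly --- it is enough to know that non-specialty keeps them out of the deep base locus.
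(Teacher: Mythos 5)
The paper does not actually prove Theorem~\ref{Approx-thm}; it imports the statement from~\cite{Eck08}, so there is no in-paper argument to measure yours against. Taken on its own, your proof is correct and is the standard argument for results of this type: reduce nefness to irreducible curves, derive the uniform bound $D_i\cdot C\geq p_a(C)-1\geq -1$ from non-specialty via $\dim|D_i|=\chi(D_i)-1$, $\dim|D_i-C|\geq\chi(D_i-C)-1$ (using $h^2(D_i-C)=h^0(K_X-D_i+C)=0$ for large $d_i$) and the identity $\chi(D_i)-\chi(D_i-C)=D_i\cdot C-p_a(C)+1$, then pass to the limit. Two caveats are worth making explicit. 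First, the hypothesis as printed, $\frac{d_i}{m_i}\to\frac{a}{\sqrt{n}}$, is inconsistent with the conclusion and with the way the theorem is invoked later in the paper (where $\frac{d_k}{m_k}\to\mu$ is used to conclude $\epsilon\geq\frac{1}{\mu}$); the intended normalisation is $\frac{m_i}{d_i}\to\frac{a}{\sqrt{n}}$, which is exactly what your limit computation assumes, so you have silently corrected the statement rather than proved it as written. Second, your final step needs $m_i\to\infty$ (hence $d_i\to\infty$), both to make the error term $\frac{a}{\sqrt{n}\,m_i}$ vanish and to guarantee $(K_X-D_i+C)\cdot\pi^\ast H<0$ for $i$ large; this is not literally among the stated hypotheses, and without it the conclusion can fail (a single effective non-special class may still meet a $(-1)$-curve in $-1$ and so need not be nef). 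Since every application in the paper produces sequences with $d_k,m_k\to\infty$, this is a defect of the transcribed statement rather than of your argument, but it should be flagged as an extra hypothesis you are using.
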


\noindent A first attempt to apply this method was made in~\cite{Eck08b}, using Dumnicki's reduction algorithm~\cite{Dum07},
but only led to the uninteresting bound $\frac{4}{13}$. In this paper the combination with Ciliberto-Miranda degenerations 
yields the new and up to now best known lower bound $\frac{117}{370} \approx 0.31621\ldots$: compared to 
$\frac{55}{174} \approx 0.31609\ldots$ this is one decimal closer to $\sqrt{10} \approx 0.31622\ldots$.

\noindent Besides the approximative approach there are two other new ingredients in this paper:
\begin{enumerate}[leftmargin=*]
\item We consistently use a non-specialty criterion for line bundles, which generalizes the core of Harbourne's Criterion 
for linear systems on $\mathbb{P}^2$ blown up in several points, in~\cite{Har85}. It also works for Hirzebruch surfaces.
\item Instead of flopping the "bad" $(-1)$-curves we only blow them up until the exceptional divisor is isomorphic to
$\mathbb{P}^1 \times \mathbb{P}^1$. According to the Atiyah flop Ciliberto and Miranda continue blowing down the other
fibering of $\mathbb{P}^1 \times \mathbb{P}^1$ thus really erasing the "bad" curve. But in this way they produce non-normal
components, and to prove non-specialty they must again pull back to the blown up components. 

\noindent To avoid this extra turn we just modify the degenerated line bundle on the blow up such that its intersection with the
"bad" curves vanishes. The central fiber of our degenerations thus contain more components but we still consider this 
procedure to be more transparent. In particular it lead us to discover two degenerations underlying the Fifth Degeneration of
Ciliberto and Miranda, and finally to the new bound $\frac{117}{370}$. 
\end{enumerate}  

\noindent The steps of the method and the statements proving the correctness are presented in Section~\ref{CM-method-sec}.
The calculations leading to the new bound $\frac{117}{370}$ are the contents of Section~\ref{CM-deg-sec}. In the last
paragraphs we show
that the next degeneration is considerably more complicated than the previous ones, and we discuss some conditions which
would guarantee that the algorithm never terminates.

\vspace{0.2cm}

\noindent \textbf{Acknowledgements.} The author wants to thank Joaquim Ro\'e who invited him to the
Workshop on Seshadri Constants, held in Barcelona, November 2008.

\noindent The author also wants to thank Ciro Ciliberto for explaining his and Miranda's method during this workshop.
Obviously
this paper owes a lot to their work and also concentrates on the case of 10 points on $\mathbb{P}^2$. Instead, the algorithm
could be used to
produce lower bounds for Seshadri constants of $11$ or more points on $\mathbb{P}^2$. On the other hand, the apparent 
similarities allow to compare the approach in this paper with that of Ciliberto and Miranda particularly well. 

\vspace{0.2cm}

\noindent \textbf{Notation.} 
We consider smooth complex projective surfaces $X$ and sequences of morphisms 
\[ X_n \stackrel{\pi_n}{\rightarrow} X_{n-1} \stackrel{\pi_{n-1}}{\rightarrow} \ldots \stackrel{\pi_1}{\rightarrow} X_0 := X,  \]
where each $\pi_{i}$ is the blow up of a point $p_i \in X_{i-1}$. We also denote $X_i$ by $X(p_1, \ldots, p_i)$, and set
$\pi := \pi_n \circ \ldots \circ \pi_1$. 

\noindent Note that the $p_1, \ldots, p_n$ are not assumed to be in general position. For example, the point $p_i$ can be
mapped onto the point $p_j$ by the intermediate blow downs, if $i > j$. Then $p_i$ is said to
be infinitely near to $p_j$. Sometimes we emphasize this relation by brackets: $[p_1; p_2, \ldots, p_k]$ means that the points
$p_2, \ldots, p_k$ are infinitely near to $p_1$. Each of the $p_2, \ldots, p_k$ can again be replaced by pairs of infinitely near
points etc. 

\noindent $E_i \subset X_i = X(p_1, \ldots, p_i)$ denotes the exceptional divisor over $p_i$, the divisor 
\[ \mathcal{E}_i := \pi_n^\ast \cdots \pi_{i+1}^\ast E_i \]
denotes the pullback on $X_n$.  

\noindent $(\mathcal{E}_1, \ldots, \mathcal{E}_n)$ is called an \textit{exceptional configuration} on $X_n$. We know
\[ \mathcal{E}_i^2 = -1,\ \mathcal{E}_i.\mathcal{E}_j = 0, i \neq j,\ \pi^\ast \mathcal{F}.\mathcal{E}_i = 0 \]
for all line bundles $\mathcal{F}$ on $X$, and $\mathrm{Pic}(X_n)$ is generated by $\mathrm{Pic}(X)$ and 
$\mathcal{E}_1, \ldots, \mathcal{E}_n$.

\noindent If $X = \mathbb{P}^2$ we set $\mathcal{E}_0 := \pi^\ast \mathcal{O}_{\mathcal{P}^2}(1)$, and 
\[ \mathcal{L}(d;m_1, \ldots, m_n) := d \cdot \mathcal{E}_0 - \sum_{i=1}^{n} m_i \mathcal{E}_i. \]
Sometimes, multiple occurences of the same coefficient $m$ is abbreviated by $m^k$.

\noindent Later on, degree and multiplicities will linearly depend on paramaters $d, m, a$. We do not abbreviate these forms
by introducing new letters thus making the notation of some line bundles quite cumbersome. But we prefer to leave the
dependence of degrees and multiplicities transparent and easy to analyse. 

\noindent Finally, $\mathbb{F}_0 \cong \mathbb{P}^1 \times \mathbb{P}^1$, $\mathbb{F}_1 \cong \mathbb{P}^2(p)$, $\ldots$,
$\mathbb{F}_k$, $\ldots$ denote the Hirzebruch surfaces, with projections $\pi_{\mathbb{F}_k}$ to $\mathbb{P}^1$.
Accepting some ambiguity
$E_k \subset \mathbb{F}_k$ denotes the curve at infinity, with self intersection $-k$, whereas $F_k$ denotes a fiber of the 
$\mathbb{P}^1$-bundle $\mathbb{F}_k$. On $\mathbb{F}_k(p_1, \ldots, p_n)$,
\[ \mathcal{L}_{\mathbb{F}_k}(d_1,d_2;m_1, \ldots, m_n) := d_1 \cdot E_k + d_2 \cdot F_k - \sum_{i=1}^{n} m_i \mathcal{E}_i. \]

\section{The Ciliberto-Miranda method}  \label{CM-method-sec}

\subsection{Degenerations and the Gluing Lemma}

\noindent Degenerations are a well-known tool to study (complete) linear systems.

\begin{prop}  \label{degen-prop}
Let $f: \mathcal{X} \rightarrow \Delta$ be a reduced family of projective complex schemes over the unit disc $\Delta$, and
let $\mathcal{L}$ be a line bundle on $\mathcal{X}$. Let $X_t$ denote the fiber of $\mathcal{X}$ over $t \in \Delta$, and set 
$L_t := \mathcal{L}_{| X_t}$. Then:
\[ h^1(X_0, L_0) = 0 \Longrightarrow h^1(X_t, L_t) = 0, \]
for all $t \in \Delta^\prime \subset \Delta$, a smaller unit disc.
\end{prop}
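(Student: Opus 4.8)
The plan is to deduce the statement from the upper semicontinuity of cohomology dimensions in a proper flat family, so the whole argument reduces to checking that the hypotheses of the Semicontinuity Theorem (see e.g. Hartshorne, \emph{Algebraic Geometry}, III.12.8) are satisfied and then reading off the conclusion. First I would record that $f$ is proper: the fibers are projective and, after shrinking $\Delta$ if necessary, $f$ is a projective morphism, hence proper, so the higher direct images $R^i f_\ast \mathcal{L}$ are coherent on $\Delta$.

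The key preliminary step is flatness. The base $\Delta$ is regular of dimension one, so flatness of $\mathcal{X}$ over $\Delta$ is equivalent to torsion-freeness of $\mathcal{O}_{\mathcal{X}}$ as an $\mathcal{O}_{\Delta}$-module, which in turn amounts to the condition that no associated point of $\mathcal{X}$ maps to a closed point of $\Delta$. Because $\mathcal{X}$ is reduced, its associated points are exactly the generic points of its irreducible components; after shrinking $\Delta$ around $0$ we may assume every component of $\mathcal{X}$ dominates $\Delta$, and then $f$ is flat. Since $\mathcal{L}$ is a line bundle on $\mathcal{X}$, it is locally isomorphic to $\mathcal{O}_{\mathcal{X}}$ and is therefore also flat over $\Delta$; moreover flat base change identifies the restriction $L_t = \mathcal{L}_{|X_t}$ with the fiber sheaf appearing in the Semicontinuity Theorem, so there is no discrepancy between the sheaf whose cohomology we want to control and the sheaf the theorem applies to.

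With flatness and properness in hand, the Semicontinuity Theorem asserts that the function $t \mapsto h^1(X_t, L_t)$ is upper semicontinuous on $\Delta$. Consequently the locus $\{\, t \in \Delta : h^1(X_t, L_t) \geq 1 \,\}$ is closed in $\Delta$. If $h^1(X_0, L_0) = 0$, then $0$ lies in the open complement of this closed locus, and any sufficiently small subdisc $\Delta^\prime \subset \Delta$ centred at $0$ is contained in that complement; for every $t \in \Delta^\prime$ we then have $h^1(X_t, L_t) = 0$, which is exactly the assertion.

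I expect the only genuinely delicate point to be the passage from the word \emph{reduced} in the hypothesis to honest flatness of $f$, together with the accompanying shrinking of $\Delta$ so that every component of $\mathcal{X}$ dominates the disc; once flatness is secured, the identification of $L_t$ via base change and the final semicontinuity argument are formal. A secondary point worth stating carefully is that the conclusion is intrinsically local around $0 \in \Delta$, so replacing $\Delta$ by a smaller disc throughout costs nothing and in fact is built into the statement.
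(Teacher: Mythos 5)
Your proposal is correct and follows essentially the same route as the paper: the paper's proof likewise invokes the Semicontinuity Theorem \cite[Thm.III.12.8]{Hart:AG} and obtains flatness from reducedness over the one-dimensional smooth base via \cite[Prop.III.9.7]{Hart:AG}. Your write-up merely spells out the associated-points argument and the shrinking of $\Delta$ in more detail.
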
 
\begin{proof}
This is a consequence of upper-semicontinuity of the $h^1$-function on flat families of projective schemes 
\cite[Thm.III.12.8]{Hart:AG}. The
flatness follows because $\mathcal{X}$ is reduced over a 1-dimensional smooth base \cite[Prop.III.9.7]{Hart:AG}. 
\end{proof}

\noindent Using this proposition on a given degeneration requires to calculate $H^1(X_0, L_0)$.
\begin{lem}[Gluing Lemma \cite{CM08}]     \label{glue-lem}
Let $X = \bigcup_{i=1}^{n} V_i$ be a union of projective complex schemes, where the $V_i$ are closed subschemes of $X$. Set
$W_k := \bigcup_{i=1}^{k} V_i$, $k=1, \ldots, n$, and denote by $C_{k-1}$ the scheme-theoretic intersection 
$V_k \cap W_{k-1}$, $k=2, \ldots, n$. 

\noindent Let $L$ be a line bundle on $X$ satisfying
\begin{itemize}
\item[(i)] $H^1(V_i, L_{|V_i}) = 0$ for $i = 1, \ldots, n$, 
\item[(ii)] the difference maps $H^0(W_{k-1}, L_{|W_{k-1}}) \oplus H^0(V_k, L_{|V_k}) \rightarrow H^0(C_{k-1}, L_{|C_{k-1}})$ are 
surjective for all $k = 2, \ldots, n$.
\end{itemize}
Then: $H^1(X,L) = 0$.
\end{lem}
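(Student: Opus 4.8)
The plan is to establish $H^1(W_k, L_{|W_k}) = 0$ for every $k = 1, \ldots, n$ by induction on $k$; since $W_n = X$, the case $k = n$ is exactly the desired conclusion. The base case $k = 1$ is immediate, because $W_1 = V_1$ and hypothesis (i) gives $H^1(V_1, L_{|V_1}) = 0$.

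For the inductive step I would glue $W_{k-1}$ and $V_k$ along their scheme-theoretic intersection $C_{k-1}$ by means of the Mayer--Vietoris short exact sequence of sheaves on $W_k$,
\[ 0 \rightarrow \mathcal{O}_{W_k} \rightarrow \mathcal{O}_{W_{k-1}} \oplus \mathcal{O}_{V_k} \rightarrow \mathcal{O}_{C_{k-1}} \rightarrow 0, \]
where the first arrow is the restriction $s \mapsto (s_{|W_{k-1}}, s_{|V_k})$ and the second is the difference map $(a,b) \mapsto a_{|C_{k-1}} - b_{|C_{k-1}}$. Since $L$ is a line bundle, tensoring this sequence with $L$ keeps it exact:
\[ 0 \rightarrow L_{|W_k} \rightarrow L_{|W_{k-1}} \oplus L_{|V_k} \rightarrow L_{|C_{k-1}} \rightarrow 0. \]
Passing to the long exact cohomology sequence, the relevant segment reads
\[ H^0(W_{k-1}, L) \oplus H^0(V_k, L) \stackrel{\delta}{\rightarrow} H^0(C_{k-1}, L) \rightarrow H^1(W_k, L) \rightarrow H^1(W_{k-1}, L) \oplus H^1(V_k, L). \]

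Now the two hypotheses do exactly what is needed. The map $\delta$ is precisely the difference map appearing in condition (ii), so (ii) says $\delta$ is surjective; hence the next map $H^0(C_{k-1}, L) \rightarrow H^1(W_k, L)$ is zero and $H^1(W_k, L)$ injects into $H^1(W_{k-1}, L) \oplus H^1(V_k, L)$. By the inductive hypothesis the first summand vanishes, and by (i) the second summand vanishes, so $H^1(W_k, L) = 0$, completing the induction.

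The one point that needs care --- and the main obstacle --- is justifying the exactness of the sheaf sequence above. This comes down to the scheme-theoretic identities $\mathcal{I}_{W_k} = \mathcal{I}_{W_{k-1}} \cap \mathcal{I}_{V_k}$ and $\mathcal{I}_{C_{k-1}} = \mathcal{I}_{W_{k-1}} + \mathcal{I}_{V_k}$ of ideal sheaves in $\mathcal{O}_X$. The first is automatic from $W_k$ being the scheme-theoretic union $\bigcup_{i \leq k} V_i$, whose ideal is $\bigcap_{i \leq k} \mathcal{I}_{V_i}$; the second is exactly the hypothesis that $C_{k-1}$ is the scheme-theoretic intersection $V_k \cap W_{k-1}$. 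Granting these, exactness is the elementary algebraic fact that
\[ 0 \rightarrow \mathcal{O}_X/(\mathcal{I}_{W_{k-1}} \cap \mathcal{I}_{V_k}) \rightarrow \mathcal{O}_X/\mathcal{I}_{W_{k-1}} \oplus \mathcal{O}_X/\mathcal{I}_{V_k} \rightarrow \mathcal{O}_X/(\mathcal{I}_{W_{k-1}} + \mathcal{I}_{V_k}) \rightarrow 0 \]
is exact, which can be verified stalk by stalk. Everything else is the formal cohomological bookkeeping above, so I expect no further difficulty.
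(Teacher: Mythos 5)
Your proof is correct and follows essentially the same route as the paper: induction on $k$ via the Mayer--Vietoris short exact sequence $0 \to L_{|W_k} \to L_{|W_{k-1}} \oplus L_{|V_k} \to L_{|C_{k-1}} \to 0$ (which the paper isolates as a separate lemma and likewise verifies locally via the ideal identities you state), followed by the long exact cohomology sequence, with (ii) killing the connecting map and (i) plus the inductive hypothesis killing the $H^1$ terms. No gaps.
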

\begin{proof}
This follows inductively from the long exact cohomology sequences obtained of the short exact sequences in the next lemma,
applied on $W_k = W_{k-1} \cup V_k$ and $L_{|W_k}$.
\end{proof}

\begin{lem}
Let $X = V \cup W$ be a projective complex scheme, where $V, W$ are closed subschemes of $X$, $C = V \cap W$ the
scheme-theoretic intersection, and $\mathcal{L}$ an invertible sheaf on $X$.

\noindent Then there exists an exact sequence of coherent sheaves on $X$,
\[ 0 \rightarrow \mathcal{L} \rightarrow \mathcal{L}_{|V} \oplus \mathcal{L}_{|W} \rightarrow \mathcal{L}_{|C} \rightarrow 0, \]
where $\mathcal{L}_{|V} \oplus \mathcal{L}_{|W} \rightarrow \mathcal{L}_{|C}$ is the difference map. 
\end{lem}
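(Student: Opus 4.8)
The plan is to deduce the stated sequence from the corresponding one for structure sheaves, namely
\[ 0 \rightarrow \mathcal{O}_X \rightarrow \mathcal{O}_V \oplus \mathcal{O}_W \rightarrow \mathcal{O}_C \rightarrow 0, \]
and then tensor with the invertible sheaf $\mathcal{L}$. Since exactness of a sequence of sheaves is a local (indeed stalkwise) property, I would check everything at the level of stalks, so that all the constructions reduce to elementary statements about modules over a local ring.

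The first step is to record what the scheme-theoretic hypotheses say in terms of ideal sheaves. Write $\mathcal{I}_V, \mathcal{I}_W \subset \mathcal{O}_X$ for the ideal sheaves of the closed subschemes $V$ and $W$. The assumption $X = V \cup W$ (as schemes) means that the ideal sheaf of $X$ inside itself vanishes, i.e. $\mathcal{I}_V \cap \mathcal{I}_W = 0$, while the assumption that $C = V \cap W$ is the \emph{scheme-theoretic} intersection means that $\mathcal{I}_C = \mathcal{I}_V + \mathcal{I}_W$. With the identifications $\mathcal{O}_V = \mathcal{O}_X/\mathcal{I}_V$, $\mathcal{O}_W = \mathcal{O}_X/\mathcal{I}_W$ and $\mathcal{O}_C = \mathcal{O}_X/(\mathcal{I}_V + \mathcal{I}_W)$, the first map of the sequence sends a section $s$ to the pair of restrictions $(s_{|V}, s_{|W})$, and the second is the difference map $(a,b) \mapsto a_{|C} - b_{|C}$.

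Next I would verify exactness at the three spots. Injectivity on the left is precisely the statement $\mathcal{I}_V \cap \mathcal{I}_W = 0$. Surjectivity on the right holds because $\mathcal{I}_V \subseteq \mathcal{I}_C$ already makes $\mathcal{O}_V \rightarrow \mathcal{O}_C$ surjective, hence so is the difference map. The only substantial point is exactness in the middle: given a pair $(a,b)$ mapping to $0$, I lift $a$ and $b$ to sections $\tilde{a}, \tilde{b}$ of $\mathcal{O}_X$; then $\tilde{a} - \tilde{b} \in \mathcal{I}_C = \mathcal{I}_V + \mathcal{I}_W$, so I may write $\tilde{a} - \tilde{b} = x + y$ with $x \in \mathcal{I}_V$ and $y \in \mathcal{I}_W$, and then $s := \tilde{a} - x = \tilde{b} + y$ restricts to $a$ on $V$ and to $b$ on $W$, exhibiting $(a,b)$ in the image. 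This is the step where the precise scheme-theoretic meaning $\mathcal{I}_C = \mathcal{I}_V + \mathcal{I}_W$ of the intersection is indispensable, and it is the part I expect to demand the most care.

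Finally I tensor the structure-sheaf sequence with $\mathcal{L}$. Because $\mathcal{L}$ is invertible, hence locally free, tensoring by $\mathcal{L}$ over $\mathcal{O}_X$ is exact and leaves the sequence exact. Applying the projection formula to the closed immersions $V \hookrightarrow X$, $W \hookrightarrow X$ and $C \hookrightarrow X$ identifies $\mathcal{L} \otimes \mathcal{O}_V = \mathcal{L}_{|V}$, and likewise for $W$ and $C$, so the tensored sequence is exactly
\[ 0 \rightarrow \mathcal{L} \rightarrow \mathcal{L}_{|V} \oplus \mathcal{L}_{|W} \rightarrow \mathcal{L}_{|C} \rightarrow 0, \]
with the induced map on the right being the difference map, as asserted.
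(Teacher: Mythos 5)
Your proposal is correct and follows essentially the same route as the paper: the paper also reduces to affine opens trivializing $\mathcal{L}$ and derives exactness from the two ideal-theoretic identities $I_V \cap I_W = (0)$ and $I_V + I_W = I_C$. You have merely written out in full the diagram chase that the paper leaves implicit.
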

\begin{proof}
The exactness of the sequence can be checked on open affine subsets $\mathrm{Spec} A$, on which $\mathcal{L}$ is trivial. If
$I_V, I_W, I_C \subset A$ are the
ideals describing the closed subschemes $V, W, C$ in $\mathrm{Spec} A$, the claim follows from $I_V + I_W = I_C$ and
$I_V \cap I_W = (0)$.
\end{proof}

\begin{rem}   \label{diffmap-rem}
Condition (ii) of the Gluing Lemma~\ref{glue-lem} is already satisfied if 
$H^1(W_{k-1}, L_{|W_{k-1}} \otimes \mathcal{I}_{C_{k-1}/W_{k-1}}) = 0$ or 
$H^1(V_k, L_{|V_k} \otimes \mathcal{I}_{C_{k-1}/V_k}) = 0$. Here, $\mathcal{I}_{C_{k-1}/W_{k-1}}$ resp.\/ 
$\mathcal{I}_{C_{k-1}/V_k}$ are the ideal sheaves of $C_{k-1}$ in $W_{k-1}$ resp.\/ $V_k$. This follows from the long exact
cohomolgy sequence associated to
\[ 0 \rightarrow L_{|W_{k-1}} \otimes \mathcal{I}_{C_{k-1}/W_{k-1}} \rightarrow L_{|W_{k-1}} \rightarrow L_{|C_{k-1}}
      \rightarrow 0 \]
(or the analogue sequence for $V_k$), because then
\[ H^0(W_{k-1}, L_{|W_{k-1}}) \rightarrow H^0(C_{k-1}, L_{|C_{k-1}}) \rightarrow   
   H^1(W_{k-1}, L_{|W_{k-1}} \otimes \mathcal{I}_{C_{k-1}/W_{k-1}}) = 0 \]
is exact (or the analogue sequence for $V_k$). 
\end{rem}

\subsection{Non-special linear systems} \label{non-spec-sec}

\noindent The degenerations of $\mathbb{P}^2$ blown up in 10 points studied later on have a central fiber $X_0$ consisting of
irreducible components isomorphic to $\mathbb{P}^2$ or a Hirzebruch surface $\mathbb{F}_k$ blown up in several points, 
possibly in special position, and intersecting in curves without embedded points. 
Then $L_{|V_k} \otimes \mathcal{I}_{C_{k-1}/V_k} = L_{|V_k} \otimes \mathcal{O}_{V_k}(-C_{k-1})$ is again a line bundle. In
view of Remark~\ref{diffmap-rem} this implies for applying the Gluing Lemma~\ref{glue-lem} that we only need criteria for the
vanishing of $H^1$-groups of line bundles on such surfaces.

\noindent The first vanishing criterion is extracted from the central argument of Harbourne's Criterion discussed afterwards.
\begin{thm}  \label{curvevan-crit}
Let $\mathbb{F}$ be $\mathbb{P}^2$ or a Hirzebruch surface $\mathbb{F}_k$.
Let $X = \mathbb{F}(p_1, \ldots, p_n)$ be a blow up of $\mathbb{F}$ in several points. Let $\mathcal{F}$ be a line bundle
on $X$, and set
\[ l := \min \{ k: \mathcal{F} = \pi_n^\ast \cdots \pi_{k+1}^\ast \mathcal{F}_k,\ \mathcal{F}_k\ \mathrm{line\ bundle\ on\ } 
                                                                                                                             X_k = \mathbb{F}(p_1, \ldots, p_k) \}. \]
Let $C$ be a reduced curve on $X_l$ with irreducible components $C_1, \ldots, C_r$. Assume that
\begin{itemize}
\item[(i)] $\left[ K_{X_l} \otimes \mathcal{O}_{X_l}(C)\right].C_i < \mathcal{F}_l.C_i$ for all $i = 1, \ldots, r$, and
\item[(ii)] $H^1(X_l, \mathcal{F}_l \otimes \mathcal{O}_{X_l}(-C)) = 0$.
\end{itemize}
Then: $H^1(X, \mathcal{F}) = 0$.
\end{thm}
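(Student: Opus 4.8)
The plan is to descend the problem from $X$ to the surface $X_l$, then cut down by the curve $C$ to a cohomology group on $C$, and finally kill that group using Serre duality on $C$. For the first step, write $\rho := \pi_n \circ \cdots \circ \pi_{l+1} : X \to X_l$, so that by definition of $l$ we have $\mathcal{F} = \rho^\ast \mathcal{F}_l$. Since $\rho$ is a composition of blow ups of points on smooth surfaces, its fibres are trees of $\mathbb{P}^1$'s, so $\rho_\ast \mathcal{O}_X = \mathcal{O}_{X_l}$ and $R^q \rho_\ast \mathcal{O}_X = 0$ for $q > 0$. By the projection formula $R^q \rho_\ast \mathcal{F} = \mathcal{F}_l \otimes R^q \rho_\ast \mathcal{O}_X$, so the Leray spectral sequence degenerates and yields $H^q(X, \mathcal{F}) = H^q(X_l, \mathcal{F}_l)$ for all $q$; in particular $H^1(X, \mathcal{F}) = H^1(X_l, \mathcal{F}_l)$. (The minimality of $l$ is not actually needed here — any $l$ with $\mathcal{F} = \rho^\ast \mathcal{F}_l$ would serve — it only pins down the surface on which $C$ lives.) It therefore suffices to prove $H^1(X_l, \mathcal{F}_l) = 0$.

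Next I would restrict to $C$. Tensoring the structure sequence $0 \to \mathcal{O}_{X_l}(-C) \to \mathcal{O}_{X_l} \to \mathcal{O}_C \to 0$ by the line bundle $\mathcal{F}_l$ gives
\[ 0 \to \mathcal{F}_l \otimes \mathcal{O}_{X_l}(-C) \to \mathcal{F}_l \to \mathcal{F}_l|_C \to 0. \]
The associated long exact cohomology sequence, combined with hypothesis (ii), which supplies $H^1(X_l, \mathcal{F}_l \otimes \mathcal{O}_{X_l}(-C)) = 0$, produces an injection $H^1(X_l, \mathcal{F}_l) \hookrightarrow H^1(C, \mathcal{F}_l|_C)$. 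Thus it is enough to show $H^1(C, \mathcal{F}_l|_C) = 0$.

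Finally I would exploit hypothesis (i). As an effective Cartier divisor on the smooth surface $X_l$, the curve $C$ is a local complete intersection, hence Gorenstein, and carries an invertible dualizing sheaf $\omega_C$ with $\omega_C = \left( K_{X_l} \otimes \mathcal{O}_{X_l}(C) \right)|_C$ by adjunction. Serre duality on the projective curve $C$ then gives $H^1(C, \mathcal{F}_l|_C) \cong H^0(C, \omega_C \otimes (\mathcal{F}_l|_C)^{\vee})^{\vee}$, so I am reduced to showing that the line bundle $\mathcal{M} := \omega_C \otimes (\mathcal{F}_l|_C)^{\vee}$ has no global sections. On each component $C_i$ its degree is $\left[ K_{X_l} \otimes \mathcal{O}_{X_l}(C) \right].C_i - \mathcal{F}_l.C_i$, which is strictly negative by (i). Because $C$ is reduced, $\mathcal{O}_C \hookrightarrow \bigoplus_i \mathcal{O}_{C_i}$ is injective, and tensoring with the locally free $\mathcal{M}$ keeps $H^0(C, \mathcal{M}) \hookrightarrow \bigoplus_i H^0(C_i, \mathcal{M}|_{C_i})$ injective; each integral projective $C_i$ carries a negative-degree line bundle, so $H^0(C_i, \mathcal{M}|_{C_i}) = 0$. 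Hence $H^0(C, \mathcal{M}) = 0$, whence $H^1(C, \mathcal{F}_l|_C) = 0$ and the theorem follows.

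The only genuinely delicate point I anticipate is this last step, where $C$ is allowed to be reducible and singular (nodal where its components meet): one must invoke the invertible dualizing sheaf and Serre duality in the singular Gorenstein setting rather than the smooth-curve version, and the implication \emph{negative degree on every component $\Rightarrow$ no global sections} genuinely uses the reducedness of $C$ to ensure injectivity of restriction to the components. Everything else — the pushforward computation and the cutting-down sequence — is formal.
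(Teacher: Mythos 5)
Your argument is correct and follows essentially the same route as the paper's proof: reduce to $X_l$ via the blow-down, use the restriction sequence together with hypothesis (ii) to pass to $H^1(C,\mathcal{F}_l|_C)$, and kill that group by Serre duality on the Gorenstein curve $C$ plus the injection of $\mathcal{O}_C$ into the direct sum of the structure sheaves of its components, where hypothesis (i) gives negative degrees. The only difference is the order in which the steps are presented.
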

\begin{proof}
As $X$ is obtained from successive blow ups of points from $X_l$, the cohomology groups of $\mathcal{F}$ and 
$\mathcal{F}_l$ are isomorphic \cite[Prop.V.3.4]{Hart:AG}. Hence we assume $l = n$.

\noindent The dualizing sheaf on the Cartier divisor $C$ is 
$\omega_C := \left[ K_X \otimes \mathcal{O}(C) \right]_{|C}$ and Serre duality holds (see~\cite[III.7]{Hart:AG}):
\[ h^1(C, \mathcal{F}_{|C}) = h^0(C, \omega_C \otimes \mathcal{F}_{|C}^{-1}). \]
Using the morphism $\phi: C_1 \sqcup \ldots \sqcup C_r \rightarrow C$ from the disjoint union of the irreducible components
$C_1, \ldots, C_r$ on $C$, the inclusion $\mathcal{O}_C \subset \phi_\ast(\mathcal{O}_{C_1 \sqcup \ldots \sqcup C_r})$ and
the projection formula, we conclude that $H^0(C, \omega_C \otimes \mathcal{F}_{|C}^{-1})$ is a subgroup of
\[ H^0(C_1 \sqcup \ldots \sqcup C_r, \phi^\ast(\omega_C \otimes \mathcal{F}_{|C}^{-1})) = 
   \bigoplus_{i=1}^r H^0(C_i, \left[ K_X \otimes \mathcal{O}_X(C) \otimes \mathcal{F}_{|C}^{-1} \right]_{|C_i}). \]
But since $\left[ K_X \otimes \mathcal{O}_X(C)\right].C_i < \mathcal{F}.C_i$, the degree of the invertible sheaf 
$\left[ K_X \otimes \mathcal{O}_X(C) \otimes \mathcal{F}_{|C}^{-1} \right]_{|C_i}$ is negative on the irreducible curve $C_i$,
hence
\[ h^0(C_i, \left[ K_X \otimes \mathcal{O}_X(C) \otimes \mathcal{F}_{|C}^{-1} \right]_{|C_i}) = 0, \]
and by Serre duality, $H^1(C, \mathcal{F}_{|C}) = 0$.

\noindent The claim follows from considering the long exact cohomology sequence associated to the short exact sequence
\[ 0 \rightarrow \mathcal{F} \otimes \mathcal{O}_{X}(-C) \rightarrow \mathcal{F} \rightarrow \mathcal{F}_{|C} \rightarrow 0. \]
\end{proof}

\noindent Of course, this theorem just shifts the proof of vanishing to another line bundle which hopefully is simpler. For
$\mathbb{F} = \mathbb{P}^2$,
Harbourne \cite{Har85} developped an inductive scheme which guarantees vanishing if $|-K_X|$ contains an irreducible and
reduced section and the coefficients of $\mathcal{F} = \mathcal{L}(d;m_1, \ldots, m_n)$ satisfy some numerical conditions.

\begin{Def}
A surface $X = \mathbb{P}^2(p_1, \ldots, p_n)$ is called \textbf{strongly anticanonical} iff the anticanonical linear system
$|-K_X|$ contains an irreducible and reduced section.

\noindent A line bundle $\mathcal{F} = \mathcal{L}(d;m_1, \ldots, m_n)$ on $X$ is called \textbf{standard} iff
\[ d \geq 0,\ m_i \geq 0,\ m_i - m_{i+1} \geq 0,\ d - m_i - m_j - m_k \geq 0,\ 1 \leq i < j < k \leq n.  \]
$\mathcal{F}$ is called \textbf{excellent} iff $\mathcal{F}$ is standard and $\mathcal{F}.K_X < 0$.
\end{Def}

\begin{rem} \label{ac9-rem}
Let $p_1, \ldots, p_n \in \mathbb{P}^2$ be $n \leq 8$ points in general position on $\mathbb{P}^2$. Then 
$X = \mathbb{P}^2(p_1, \ldots, p_n)$ is strongly anti-canonical: For $8$ points in general position on $\mathbb{P}^2$ there 
always exists a smooth cubic curve passing through the points, hence pulling back to a section of the anticanonical bundle 
$-K_X = \mathcal{L}(3;1^n)$
\end{rem}

\begin{rem}
\noindent The line bundle $\mathcal{L}(d;m_1, \ldots, m_n)$ on $X = \mathbb{P}^2(p_1, \ldots, p_n)$ is excellent if 
$\mathcal{L}(d^\prime;m^\prime_1, \ldots, m^\prime_n)$ is excellent and 
\[ d \geq d^\prime,\ m_i \leq m_i^\prime, m_i \geq m_j\ \mathrm{for\ all\ } 1 \leq i < j \leq n. \]
\end{rem}

\begin{thm}[Harbourne's Criterion \cite{Har85}] \label{Har-crit}
Let $X = \mathbb{P}^2(p_1, \ldots, p_n)$ be strongly anti-canonical and $\mathcal{F}$ an excellent line bundle on $X$. Then:
\[ H^1(X,\mathcal{F}) = 0. \]
\end{thm}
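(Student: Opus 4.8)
The plan is to prove Harbourne's Criterion by induction on the "complexity" of the excellent line bundle $\mathcal{F} = \mathcal{L}(d;m_1, \ldots, m_n)$, using Theorem~\ref{curvevan-crit} with a cleverly chosen curve $C$ at each step so that vanishing of $H^1(X,\mathcal{F})$ reduces to vanishing of $H^1$ for a strictly simpler excellent line bundle. The natural measure of complexity is something like $d + \sum_i m_i$ (or just $d$), and the whole game is to find, for a given excellent $\mathcal{F}$, a reduced curve $C$ whose class $\mathcal{F} \otimes \mathcal{O}_X(-C)$ is again excellent and has strictly smaller complexity, while simultaneously satisfying the numerical hypothesis (i) of Theorem~\ref{curvevan-crit}, namely $[K_X \otimes \mathcal{O}_X(C)].C_i < \mathcal{F}.C_i$ on each component $C_i$.

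The key steps in order are as follows. First I would handle the base cases: when $d = 0$, so $\mathcal{F} = \mathcal{L}(0;m_1,\ldots,m_n)$, standardness forces all $m_i \leq 0$ and hence (being $\geq 0$) all $m_i = 0$, so $\mathcal{F} = \mathcal{O}_X$ and $H^1(X,\mathcal{O}_X) = 0$ since $X$ is a rational surface; similarly small-$d$ cases are checked directly. Second, for the inductive step I would take $C$ to be (the strict transform of) an anticanonical curve, i.e.\ $C \in |-K_X|$, which exists and is irreducible and reduced precisely because $X$ is strongly anti-canonical. With this choice $C = -K_X$, so $K_X \otimes \mathcal{O}_X(C) = \mathcal{O}_X$, and condition (i) becomes simply $0 < \mathcal{F}.C = \mathcal{F}.(-K_X) = -\mathcal{F}.K_X$, which holds by excellence since $\mathcal{F}.K_X < 0$. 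Third, I would verify that the twisted bundle $\mathcal{F} \otimes \mathcal{O}_X(-C) = \mathcal{L}(d-3; m_1-1, \ldots, m_n-1)$ is again standard and anticanonical-positive, so that the induction hypothesis applies; standardness of the twist follows from the standardness inequalities of $\mathcal{F}$ (the differences $m_i - m_{i+1}$ and $d - m_i - m_j - m_k$ are preserved under subtracting the anticanonical class), and the complexity $d$ drops by $3$. Iterating, one eventually reaches a line bundle that is standard but no longer excellent (its intersection with $K_X$ becomes $\geq 0$), which is exactly where the base cases take over.

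The main obstacle I anticipate is the bookkeeping at the boundary of the induction: one must confirm that repeatedly subtracting $-K_X$ keeps the bundle standard all the way down, and pin down precisely the terminal standard-but-not-excellent bundles so the base case is genuinely exhaustive. In particular, when some $m_i - 1$ would go negative, the naive twist $\mathcal{L}(d-3;m_1-1,\ldots,m_n-1)$ leaves the standard range, and one must either argue this cannot happen while $\mathcal{F}$ stays excellent, or treat such configurations separately (e.g.\ the points $p_i$ with $m_i = 0$ are irrelevant and can be dropped, reducing $n$). A secondary technical point is ensuring that the chosen anticanonical section is reduced and irreducible on the relevant blow-up $X_l$ and that Theorem~\ref{curvevan-crit} is applied at the correct level $l$; but because strong anticanonicity is assumed as a hypothesis, this should reduce to invoking the definition rather than constructing the curve from scratch. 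So the proof is essentially a descent along the anticanonical direction, and the delicacy lies entirely in controlling the standardness inequalities and correctly enumerating the terminal cases.
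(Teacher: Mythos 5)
Your descent along anticanonical curves via Theorem~\ref{curvevan-crit} is indeed the engine of the paper's argument, but the paper runs it through \cite[Lem.~1.4]{Har85}: a standard bundle is precisely a nonnegative combination $a_0\mathcal{E}_0 + a_1(\mathcal{E}_0-\mathcal{E}_1) + a_2(2\mathcal{E}_0-\mathcal{E}_1-\mathcal{E}_2) + \sum_{i\ge 3} a_i(-K_{X_i})$, and the induction peels off the class with the largest nonzero index, so preservation of standardness and termination are automatic. Your version, tracking $(d;m_1,\ldots,m_n)$ directly, has a genuine gap where you assert that the iteration "eventually reaches a line bundle that is standard but no longer excellent\ldots which is exactly where the base cases take over." That is not a safe stopping point: a standard but non-excellent bundle on a strongly anticanonical surface can have $h^1\neq 0$. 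For instance, if $p_1,\ldots,p_9$ are the base points of a general pencil of cubics, then $X=\mathbb{P}^2(p_1,\ldots,p_9)$ is strongly anticanonical, $\mathcal{L}(3;1^9)$ is standard with $\mathcal{L}(3;1^9).K_X=0$, and $h^0=2$, $\chi=1$, $h^2=0$ give $h^1=1$. So you cannot hand such bundles off to a "base case"; you must instead prove that your descent never produces one. That proof is available but is exactly the step you omitted: after dropping the points of multiplicity $0$, if $n'\ge 9$ points remain then $K_{X'}^2=9-n'\le 0$, so $(\mathcal{F}+K_{X'}).K_{X'}=\mathcal{F}.K_{X'}+(9-n')<0$ and excellence is preserved; if $3\le n'\le 8$, standardness gives $m_1+m_2+m_3\ge \frac{3}{n'}\sum_i m_i$, hence $\mathcal{F}.K_{X'}\le -3d+\frac{n'}{3}d=\frac{d}{3}(n'-9)<0$ unless $d=0$, so every nontrivial standard bundle there is automatically excellent.

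The second gap is the terminal stratum: once only one or two points carry positive multiplicity you can no longer subtract $-K_{X'}$ (the degree drops below what standardness permits), and "checked directly" conceals a real, if classical, verification for the bundles $\mathcal{L}(d;m_1,m_2)$ with $d\ge m_1+m_2$. This residue is exactly what the $a_0,a_1,a_2$ terms of the paper's decomposition encode, and the paper disposes of it by a further descent along a line, the line through $p_1$, and the conic through $p_1,p_2$, again using Theorem~\ref{curvevan-crit}; you would need to do the same or cite the non-specialty of these systems. With these two repairs your argument closes up and becomes essentially the paper's proof written in different coordinates.
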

\begin{proof}
The statement is a consequence of Theorem~\ref{curvevan-crit} and another way of writing standard line bundles:

\vspace{0.2cm}

\noindent \textit{Claim.} A line bundle $\mathcal{F}$ on $X$ is standard if, and only if, it can be written as 
\[ \mathcal{F} = a_0 \mathcal{E}_0 + a_1 (\mathcal{E}_0 - \mathcal{E}_1) + 
                         a_2 (2 \mathcal{E}_0 - \mathcal{E}_1 - \mathcal{E}_2) + \sum_{i=3}^n a_i (-K_{X_i}), \]
for some integers $a_0, \ldots, a_n \geq 0$. 

\vspace{0.2cm}

\noindent \textit{Proof of the Claim.} This is \cite[Lem.~1.4]{Har85}. \hfill $\Box$

\vspace{0.2cm}

\noindent Here the anticanonical line bundle $-K_{X_i}$ on 
$X_i = \mathbb{P}^2(p_1, \ldots, p_i)$ is interpreted as a line bundle on $X$ via pullback.

\noindent The proof of the Criterion now proceeds by a double induction on 
\[ \begin{array}{rcl}
   l & := & \min \{ k: \mathcal{F} = \pi_n^\ast \cdots \pi_{k+1}^\ast \mathcal{F}_k,\ \mathcal{F}_k\ \mathrm{line\ bundle\ on\ } 
                                                                                                                             X_k = \mathbb{P}^2(p_1, \ldots, p_k) \} \\
     &  = & \max \{k: a_k \neq 0\} 
   \end{array} \]
and $a_l$: The induction start with $\mathcal{F} = \mathcal{O}_X$ is trivial. For the induction step, we can apply 
Theorem~\ref{curvevan-crit}, because
\begin{itemize}[label=(\roman*), leftmargin=*]
\item[(i)] all the line bundles $\mathcal{E}_0$, $\mathcal{E}_0 - \mathcal{E}_1$,  
$2 \mathcal{E}_0 - \mathcal{E}_1 - \mathcal{E}_2$ and the  $-K_{X_i}$ have an irreducible and reduced section on the
$X_l$ where they are not a pullback: a line, the strict transforms of a line through $p_1$ and of a conic through $p_1, p_2$, the 
images of the $-K_X$-section, 
\item[(ii)] $(K_{X_l} + C_l).C_l < 0$ for $l = 0,1,2$, and $= 0$ for $l \geq 3$,
\item[(iii)] $\mathcal{F}.\mathcal{E}_0 = a_0 > 0 $ for $l = 0$, $\mathcal{F}.(\mathcal{E}_0 - \mathcal{E}_1) = a_0 + a_1 - a_1 = 
                                                                                                  a_0 \geq 0$ for $l = 1$,
                 $\mathcal{F}.(2 \mathcal{E}_0 - \mathcal{E}_1 - \mathcal{E}_2) = 2a_0 + a_1 + 2a_2 > 0$ for $l = 2$, and 
                $\mathcal{F}.(-K_{X_l}) > 0$ for $l \geq 3$.
\end{itemize}

\end{proof}

\noindent Not all the surfaces occuring in the degenerations constructed below are 
strongly anticanonical blow ups of $\mathbb{P}^2$. In these cases, we will try to find curves on which we can iteratively apply 
Theorem~\ref{curvevan-crit}, until we obtain a linear system for which we can show vanishing with Harbourne's Criterion. 

\noindent The next criterion will be useful for checking the surjectivity condition in the Gluing Lemma~\ref{glue-lem}:
\begin{prop} \label{negexc-prop}
Let $X$ be a projective complex surface and $\pi: \widetilde{X} = X(p) \rightarrow X$ the blow up of $X$ in $p$, with
exceptional divisor $E \subset \widetilde{X}$. Let $\mathcal{F}$ be a line bundle on $X$ such that $H^1(X, \mathcal{F}) = 0$. Then:
\[ H^1(\widetilde{X}, \pi^\ast \mathcal{F} \otimes \mathcal{O}(E)) = 0. \]
\end{prop}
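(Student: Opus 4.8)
The plan is to compare $\pi^\ast \mathcal{F} \otimes \mathcal{O}(E)$ with $\pi^\ast \mathcal{F}$ by restricting to the exceptional curve $E$. Concretely, I would tensor the structure sequence $0 \to \mathcal{O}_{\widetilde{X}}(-E) \to \mathcal{O}_{\widetilde{X}} \to \mathcal{O}_E \to 0$ with the line bundle $\pi^\ast \mathcal{F} \otimes \mathcal{O}(E)$, which produces the short exact sequence
\[ 0 \to \pi^\ast \mathcal{F} \to \pi^\ast \mathcal{F} \otimes \mathcal{O}(E) \to \left( \pi^\ast \mathcal{F} \otimes \mathcal{O}(E) \right)_{|E} \to 0. \]
The whole argument then reduces to computing the two outer terms in the associated long exact cohomology sequence.

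Next I would identify the right-hand restriction. Since $\pi$ contracts $E \cong \mathbb{P}^1$ onto the point $p$, the pullback $\pi^\ast \mathcal{F}_{|E}$ is trivial, while $\mathcal{O}(E)_{|E}$ is the normal bundle of $E$ in $\widetilde{X}$, of degree $E^2 = -1$, hence equals $\mathcal{O}_{\mathbb{P}^1}(-1)$. Therefore $\left( \pi^\ast \mathcal{F} \otimes \mathcal{O}(E) \right)_{|E} \cong \mathcal{O}_{\mathbb{P}^1}(-1)$, and in particular $H^1(E, \left( \pi^\ast \mathcal{F} \otimes \mathcal{O}(E) \right)_{|E}) = H^1(\mathbb{P}^1, \mathcal{O}_{\mathbb{P}^1}(-1)) = 0$.

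Finally I would read off the relevant segment of the long exact sequence,
\[ H^1(\widetilde{X}, \pi^\ast \mathcal{F}) \to H^1(\widetilde{X}, \pi^\ast \mathcal{F} \otimes \mathcal{O}(E)) \to H^1(E, \mathcal{O}_{\mathbb{P}^1}(-1)), \]
whose right-hand term vanishes by the previous step. The left-hand term is isomorphic to $H^1(X, \mathcal{F})$, because blowing up a point on a smooth surface preserves the cohomology of a line bundle pulled back from the base \cite[Prop.V.3.4]{Hart:AG} (equivalently $\pi_\ast \mathcal{O}_{\widetilde{X}} = \mathcal{O}_X$ and $R^1 \pi_\ast \mathcal{O}_{\widetilde{X}} = 0$, combined with the projection formula). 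By hypothesis $H^1(X, \mathcal{F}) = 0$, so both outer groups vanish and the middle group is squeezed to $0$, which is the claim.

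I do not expect a genuine obstacle: the statement is essentially a one-line cohomology computation once the sequence is set up. The only points that require a little care are the two identifications along $E$ (triviality of $\pi^\ast \mathcal{F}_{|E}$ and $\mathcal{O}(E)_{|E} \cong \mathcal{O}_{\mathbb{P}^1}(-1)$) and the invocation of blow-up invariance of cohomology for the pullback $\pi^\ast \mathcal{F}$, both of which are standard.
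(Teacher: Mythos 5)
Your proposal is correct and follows exactly the paper's own argument: the same twisted structure sequence of $E$, the same identification $\left(\pi^\ast \mathcal{F} \otimes \mathcal{O}(E)\right)_{|E} \cong \mathcal{O}_{\mathbb{P}^1}(-1)$, and the same use of $H^1(\widetilde{X}, \pi^\ast \mathcal{F}) \cong H^1(X, \mathcal{F}) = 0$ via \cite[Prop.V.3.4]{Hart:AG}. No differences worth noting.
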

\begin{proof}
From the exact sequence
\[ 0 \rightarrow \pi^\ast \mathcal{F} \rightarrow \pi^\ast \mathcal{F} \otimes \mathcal{O}(E) \rightarrow 
                        \pi^\ast \mathcal{F}_{|E}  \otimes \mathcal{O}_{E}(E) = \mathcal{O}_{E}(E) \cong \mathcal{O}_{\mathbb{P}^1}(-1)
      \rightarrow 0 \]
we obtain the exact sequence
\[ H^1(\widetilde{X}, \pi^\ast \mathcal{F}) \rightarrow H^1(\widetilde{X}, \pi^\ast \mathcal{F} \otimes \mathcal{O}(E)) \rightarrow 
   H^1(E, \mathcal{O}_{E}(E)), \]
and the proposition follows from $0 = H^1(X, \mathcal{F}) = H^1(\widetilde{X}, \pi^\ast \mathcal{F})$ and 
$H^1(E, \mathcal{O}_{E}(E)) \cong H^1(\mathbb{P}^1, \mathcal{O}_{\mathbb{P}^1}(-1)) = 0$.
\end{proof}

\subsection{Transforming exceptional configurations} \label{Crem-sec}

\noindent Harbourne's Criterion requires the standardness of line bundles on $\mathbb{P}^2(p_1, \ldots, p_n)$ which depends
on the exceptional configuration. These configurations are not at all unique on a given surface, and often a major step in
applying Harbourne's Criterion is to change them appropriately, by means of \textit{Cremona transformations}. Normally, 
Cremona transformations denote birational self-maps of $\mathbb{P}^2$. But in our context we instead consider the change of
exceptional configurations on the desingularisation of these rational maps. 

\noindent We only use compositions of quadratic Cremona transformations involving $3$ base points. The following lemma
describes the possible configurations of these base points: 
\begin{lem}[\cite{Har85}]  \label{Crem-lem}
Let $X = \mathbb{P}^2(p_1, \ldots, p_n)$ and $p_i, p_j, p_k \in X$ points such that either
\begin{itemize}
\item[(i)] $p_i, p_j, p_k$ are not collinear, or
\item[(ii)] $p_i, p_k \in \mathbb{P}^2$ and $p_j$ infinitely near to $p_i$, but $p_i, p_j, p_k$ are not collinear, or
\item[(iii)] $p_i \in \mathbb{P}^2$, $p_j$ infinitely near to $p_i$ and $p_k$ infinitely near to $p_j$, but not to $p_i$.
\end{itemize} 
Then there exist $p_1^\prime, \ldots, p_n^\prime$ and an isomorphism 
$\mathbb{P}^2(p_1, \ldots, p_n) = \mathbb{P}^2(p_1^\prime, \ldots, p_n^\prime)$ such that 
\[ \begin{array}{c} \mathcal{E}_0 = 2 \mathcal{E}_0^\prime - \mathcal{E}_i^\prime - \mathcal{E}_j^\prime - \mathcal{E}_k^\prime,\ 
   \mathcal{E}_l = \mathcal{E}_l^\prime\ \mathrm{for\ } l \neq i,j,k,\ \\  
   \mathcal{E}_i = \mathcal{E}_0^\prime - \mathcal{E}_j^\prime - \mathcal{E}_k^\prime,\ 
   \mathcal{E}_j = \mathcal{E}_0^\prime - \mathcal{E}_i^\prime - \mathcal{E}_k^\prime,\ 
   \mathcal{E}_k = \mathcal{E}_0^\prime - \mathcal{E}_i^\prime - \mathcal{E}_j^\prime. 
   \end{array} \]
In particular, a line bundle $\mathcal{F} = d \mathcal{E}_0 - m_1 \mathcal{E}_1 - \ldots - \mathcal{E}_n$ can be rewritten as
\begin{multline*}
 \mathcal{F} = (2d - m_i - m_j - m_k) \mathcal{E}_0^\prime - \sum_{l \neq i,j,k}m_l \mathcal{E}_l^\prime - \\
                       - (d-m_j-m_k) \mathcal{E}_i^\prime - (d-m_i-m_k) \mathcal{E}_j^\prime - (d-m_i-m_j) \mathcal{E}_k^\prime. 
\end{multline*}
\end{lem}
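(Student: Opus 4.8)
The plan is to reduce the statement to the standard quadratic Cremona transformation centered at three points of $\mathbb{P}^2$ and to keep careful track of the induced change of basis on the Picard group. In case (i), where $p_i, p_j, p_k$ are three distinct, non-collinear points of $\mathbb{P}^2$, the classical quadratic transformation $\sigma\colon \mathbb{P}^2 \dashrightarrow \mathbb{P}^2$ with these three base points resolves to an isomorphism between the two blow-ups at the three base points; concretely, $\mathbb{P}^2(p_i,p_j,p_k) \cong \mathbb{P}^2(p_i', p_j', p_k')$, where the primed points are the base points of $\sigma^{-1}$. First I would compute the action of this isomorphism on $\mathrm{Pic}$. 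The line class $\mathcal{E}_0$ pulls back under $\sigma$ to a conic through the three base points, giving $\mathcal{E}_0 = 2\mathcal{E}_0' - \mathcal{E}_i' - \mathcal{E}_j' - \mathcal{E}_k'$; dually, the exceptional divisor $\mathcal{E}_i$ maps to the strict transform of the line through $p_j', p_k'$, giving $\mathcal{E}_i = \mathcal{E}_0' - \mathcal{E}_j' - \mathcal{E}_k'$, and symmetrically for $j, k$. A good internal check is that these formulas preserve the intersection form: $\mathcal{E}_0^2 = 1$, $\mathcal{E}_i^2 = -1$, and all the required orthogonalities hold, which confirms the transformation is an isometry of $\mathrm{Pic}$ and hence admissible.

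The second step is to handle the infinitely near cases (ii) and (iii), which are the real content beyond the classical picture. Here I would argue by a limiting/degeneration of the configuration: the transformation formulas on $\mathrm{Pic}$ are identical to those in case (i), so it suffices to exhibit an actual isomorphism $\mathbb{P}^2(p_1,\dots,p_n) = \mathbb{P}^2(p_1',\dots,p_n')$ realizing them when some of the base points are infinitely near. In case (ii), with $p_j$ infinitely near to $p_i$ (so $p_j$ corresponds to a tangent direction at $p_i$) and $p_k$ a further point not collinear with them, and in case (iii), with a length-three infinitely near chain $p_i \leftarrow p_j \leftarrow p_k$, one still obtains a well-defined birational involution whose fundamental cycle has the same numerical type as the ordinary quadratic transformation. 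The cleanest way to see that the isomorphism exists is to note that both surfaces are obtained by blowing up three points (possibly infinitely near) on $\mathbb{P}^2$ with the same combinatorial incidence pattern, and the linear system $|2\mathcal{E}_0 - \mathcal{E}_i - \mathcal{E}_j - \mathcal{E}_k|$ still defines a morphism to $\mathbb{P}^2$ contracting exactly the three $(-1)$-curves $\mathcal{E}_0 - \mathcal{E}_j - \mathcal{E}_k$, $\mathcal{E}_0 - \mathcal{E}_i - \mathcal{E}_k$, $\mathcal{E}_0 - \mathcal{E}_i - \mathcal{E}_j$; the non-collinearity hypotheses in (i)--(iii) are precisely what guarantees these three classes are represented by disjoint smooth rational curves, so their simultaneous contraction yields a smooth surface, necessarily $\mathbb{P}^2$, equipped with the new exceptional configuration.

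Granting the isomorphism and the change-of-basis formulas, the displayed relations on $\mathcal{E}_0, \mathcal{E}_i, \mathcal{E}_j, \mathcal{E}_k$ are immediate, and the relation $\mathcal{E}_l = \mathcal{E}_l'$ for $l \neq i,j,k$ holds because the transformation is supported on the three distinguished points and acts as the identity away from them. The final formula for $\mathcal{F} = d\mathcal{E}_0 - \sum m_l \mathcal{E}_l$ is then a direct substitution: I would replace $\mathcal{E}_0$ and each of $\mathcal{E}_i, \mathcal{E}_j, \mathcal{E}_k$ by their expressions in the primed basis and collect coefficients. The coefficient of $\mathcal{E}_0'$ becomes $2d - m_i - m_j - m_k$, the coefficient of $\mathcal{E}_i'$ becomes $-(d - m_j - m_k)$ (from the $+d$ contributed by $\mathcal{E}_0$ and the $-m_j, -m_k$ contributed by $\mathcal{E}_j, \mathcal{E}_k$), and symmetrically for the other two, while $\mathcal{E}_l'$ retains coefficient $-m_l$.

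I expect the main obstacle to be the rigorous verification that cases (ii) and (iii) genuinely yield an isomorphism onto $\mathbb{P}^2$ with the asserted configuration, since the naive quadratic map $\sigma$ is only defined via its resolution and one must check that the infinitely near base points transform correctly and that no unexpected collinearity degeneracy arises. This is exactly the point where the three non-collinearity conditions are used, and it is the geometric heart of the lemma; the $\mathrm{Pic}$-level bookkeeping in the other steps is routine once this contraction argument is in place. Since the statement is attributed to \cite{Har85}, I would ultimately cite the treatment there for the existence of the isomorphism in the infinitely near cases and present the change-of-basis computation explicitly.
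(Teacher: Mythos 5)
Your case (i) argument is fine and matches the standard picture, and the final substitution for $\mathcal{F}$ is routine once the change of basis is established. The gap is in the mechanism you propose for cases (ii) and (iii). You claim that the non-collinearity hypotheses guarantee that the three classes $\mathcal{E}_0 - \mathcal{E}_j - \mathcal{E}_k$, $\mathcal{E}_0 - \mathcal{E}_i - \mathcal{E}_k$, $\mathcal{E}_0 - \mathcal{E}_i - \mathcal{E}_j$ are represented by disjoint smooth rational curves whose simultaneous contraction yields $\mathbb{P}^2$. This is false as soon as infinitely near points are involved. In case (ii), any curve passing through $p_j$ must pass through $p_i$, so $\mathcal{E}_0 - \mathcal{E}_j - \mathcal{E}_k$ is not irreducible; it decomposes as $(\mathcal{E}_0 - \mathcal{E}_i - \mathcal{E}_k) + (\mathcal{E}_i - \mathcal{E}_j)$, a $(-1)$-curve plus a $(-2)$-curve meeting in one point. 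In case (iii) two of the three classes fail to be irreducible for the same reason. Correspondingly, the morphism defined by $|2\mathcal{E}_0 - \mathcal{E}_i - \mathcal{E}_j - \mathcal{E}_k|$ does not contract three disjoint $(-1)$-curves: it contracts chains containing $(-2)$-curves (e.g.\ in case (iii) the chain $\overline{L},\ \overline{E}_2,\ \overline{E}_1$ with self-intersections $-1,-2,-2$), which must be blown down iteratively, and the resulting points $p_i',p_j',p_k'$ are again infinitely near to one another --- something a simultaneous contraction of three disjoint curves could never produce. Since you yourself flag this as ``the geometric heart of the lemma'' and then defer it to the citation, the proof as written does not establish the statement in cases (ii) and (iii).

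The paper's proof handles exactly this point: after reducing to $n=3$ (the blow-ups at the remaining points are untouched, which is the correct justification for $\mathcal{E}_l = \mathcal{E}_l'$, rather than the birational map being ``the identity away from the three points'' --- the other $p_l$ are not assumed to avoid the contracted curves), it exhibits the isomorphism by explicitly reordering the sequence of blow-ups and blow-downs, identifies the irreducible curves on the common surface ($\overline{L} = E_3'$, $\overline{E}_1 = \overline{E}_1'$, $\overline{E}_2 = \overline{E}_2'$, $\overline{E}_3 = \overline{L}'$ in case (iii)), and then computes both total-transform bases $\mathcal{E}_\bullet$ and $\mathcal{E}_\bullet'$ in terms of these irreducible components to read off the stated relations. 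To repair your argument you would need to replace the ``three disjoint $(-1)$-curves'' step by this kind of iterated contraction of the actual irreducible curves orthogonal to $2\mathcal{E}_0 - \mathcal{E}_i - \mathcal{E}_j - \mathcal{E}_k$, keeping track of the resulting infinitely near structure of $p_i', p_j', p_k'$.
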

\begin{proof}
Note that for all 3 configurations we can renumber the base points $p_1, \ldots, p_k$ such that $i=1, j=2, k=3$. In particular,
the blow ups for the other points $p_4, \ldots, p_n$ are not touched when exchanging $p_1, p_2, p_3$ with 
$p_1^\prime, p_2^\prime, p_3^\prime$. So we can assume w.l.o.g. that $n=3$.

\noindent The proof can be read off the following diagrams. The integers denote self intersections, the arrows infinitely near
points.

\vspace{0.2cm}

(i)

\begin{center}
\scalebox{0.5}{ \input{Cremona1.pstex_t} }
\end{center}


(ii)

\begin{center}
\scalebox{0.5}{ \input{Cremona2.pstex_t} }
\end{center}

(iii)

\begin{center}
\scalebox{0.5}{ \input{Cremona3.pstex_t} }

\end{center}

\vspace{0.2cm}

\noindent We detail the last diagram, the others being even simpler: Let $L$ and $L^\prime$ denote the lines through $p_1$
and $p_1^\prime$. Furthermore, let $\overline{L}$, $\overline{E}_1$, $\overline{E}_2$ and $\overline{E}_3 = E_3$ denote the
strict transforms of $L$, $E_1$, $E_2$ and $E_3$ on $\mathbb{P}^2(p_1,p_2,p_3)$, and $\overline{L}^\prime$, 
$\overline{E}_1^\prime$, $\overline{E}_2^\prime$ and $\overline{E}_3^\prime = E_3^\prime$ the strict transforms of $L^\prime$,
$E_1^\prime$, $E_2^\prime$ and $E_3^\prime$ on $\mathbb{P}^2(p_1^\prime,p_2^\prime,p_3^\prime)$. On
$\mathbb{P}^2(p_1,p_2,p_3) \cong \mathbb{P}^2(p_1^\prime,p_2^\prime,p_3^\prime)$ we can identify
\[ \overline{L} = E_3^\prime,\  \overline{E}_1 = \overline{E}_1^\prime,\  \overline{E}_2 = \overline{E}_2^\prime,\  
   \overline{E}_3 = \overline{L}^\prime. \] 
The claim follows from the equalities 
\[ \mathcal{E}_0 = \overline{L} + \overline{E}_1 + 2\overline{E}_2 + 2\overline{E}_3,\  
   \mathcal{E}_1 = \overline{E}_1 + \overline{E}_2 + \overline{E}_3,\  \mathcal{E}_2 = \overline{E}_2 + \overline{E}_3, \ 
   \mathcal{E}_3 = E_3 \]
on $\mathbb{P}^2(p_1,p_2,p_3)$ and
\[ \overline{L}^\prime = \mathcal{E}_0^\prime - \mathcal{E}_1^\prime - \mathcal{E}_2^\prime,\  
   \overline{E}_1^\prime =  \mathcal{E}_1^\prime - \mathcal{E}_2^\prime, \ 
   \overline{E}_2^\prime =  \mathcal{E}_2^\prime - \mathcal{E}_3^\prime,\  \overline{E}_3^\prime =  \mathcal{E}_3^\prime \]
on $\mathbb{P}^2(p_1^\prime,p_2^\prime,p_3^\prime)$.
\end{proof}

\begin{rem}
Even if the blow ups of $p_4, \ldots, p_n$ are not touched by the Cremona transformations, these points might become 
infinitely near to $p_1^\prime,p_2^\prime,p_3^\prime$. See the Constructions below.
\end{rem}

\noindent We must also know the position of the $p_1^\prime, \ldots, p_n^\prime$ relative to each other. The following
statement is the most general one in this direction, and is almost always implicitely applied:
\begin{prop}
Assume that $p_1, \ldots, p_n$ on $\mathbb{P}^2$ do not contain infinitely near points and are in general position.

\noindent If $\mathbb{P}^2(p_1, \ldots, p_n) \cong \mathbb{P}^2(p_1^\prime, \ldots, p_n^\prime)$ by means of a quadratic
Cremona
transformation as described in Lemma~\ref{Crem-lem}, case (i), then the $p_1^\prime, \ldots, p_n^\prime$ do also not contain 
infinitely near points and are in general position.
\end{prop}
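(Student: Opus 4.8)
My plan is to split the statement into its two halves --- that $p_1^\prime,\ldots,p_n^\prime$ are $n$ distinct honest points (no infinitely near points) and that they are in general position --- and to use throughout the following principle: the isomorphism $\phi\colon \mathbb{P}^2(p_1,\ldots,p_n)\to\mathbb{P}^2(p_1^\prime,\ldots,p_n^\prime)$ of Lemma~\ref{Crem-lem} is an isomorphism of \emph{one} surface $X$ carrying two exceptional configurations. Hence effectivity, irreducibility and intersection numbers of a divisor class are intrinsic to $X$ and do not depend on which configuration we use to write the class down; in particular a class is effective in the unprimed description if and only if the corresponding class, obtained by the substitution of Lemma~\ref{Crem-lem}, is effective in the primed one. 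This reduces every ``special curve'' condition to a purely linear-algebraic bookkeeping on the two bases.

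\textbf{No infinitely near points.} After renumbering so that the three centres are $p_1,p_2,p_3$, I would argue from the standard quadratic transformation $\sigma$ with fundamental points $p_1,p_2,p_3$. The three new fundamental points $p_1^\prime,p_2^\prime,p_3^\prime$ are the images of the three contracted lines $\overline{p_2p_3},\overline{p_1p_3},\overline{p_1p_2}$, hence three honest, non-collinear points. For $j\ge 4$, general position of the $p_i$ forces $p_j$ off each of these three lines (otherwise three of the original points would be collinear), so $\sigma$ is a local isomorphism at $p_j$ and $p_j^\prime=\sigma(p_j)$ is again an honest point. Since $\sigma$ is injective away from the three lines, the $p_4^\prime,\ldots,p_n^\prime$ are pairwise distinct and distinct from $p_1^\prime,p_2^\prime,p_3^\prime$ (whose only preimages are the contracted lines, which the $p_j$ avoid). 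Thus all $n$ new points are distinct honest points, i.e.\ there are no infinitely near points.

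\textbf{General position.} I would phrase general position as the non-effectivity of a fixed family $\mathcal{B}$ of ``special'' classes (lines through three points, conics through six, and the remaining classes forbidden by the chosen notion), and then invoke the effectivity principle above: it suffices to check that the Cremona substitution carries $\mathcal{B}$, written in the $\mathcal{E}_i^\prime$, back into $\mathcal{B}$, written in the $\mathcal{E}_i$. For example, using $\mathcal{E}_0^\prime=2\mathcal{E}_0-\mathcal{E}_1-\mathcal{E}_2-\mathcal{E}_3$ and $\mathcal{E}_i^\prime=\mathcal{E}_0-\mathcal{E}_j-\mathcal{E}_k$ for $\{i,j,k\}=\{1,2,3\}$ (the involutive mirror of Lemma~\ref{Crem-lem}), the class $\mathcal{E}_0^\prime-\mathcal{E}_i^\prime-\mathcal{E}_j^\prime-\mathcal{E}_k^\prime$ of a line through three new points transforms into
\[ \text{a conic through } p_1,p_2,p_3,p_i,p_j,p_k\ (i,j,k\ge4),\quad \text{a line } \mathcal{E}_0-\mathcal{E}_1-\mathcal{E}_i-\mathcal{E}_j,\quad \mathcal{E}_3-\mathcal{E}_i,\quad\text{or}\quad -(\mathcal{E}_0-\mathcal{E}_1-\mathcal{E}_2-\mathcal{E}_3), \]
according as none, one, two, or all three indices lie in $\{1,2,3\}$. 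Each of these is non-effective: the first two by ``no six on a conic'' and ``no three collinear'' for the $p_i$, the third by the absence of infinitely near points among the $p_i$, the last because its image in $\mathbb{P}^2$ would have negative degree. The same bookkeeping, carried out for conics through six of the $p_i^\prime$ and the further forbidden classes, closes the argument, since the family of such classes is stable under the Weyl-group element induced by $\sigma$.

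\textbf{Expected obstacle.} The genuinely delicate point is the last one: making ``general position'' precise enough that the family $\mathcal{B}$ is \emph{provably} Cremona-stable, and dispatching all its members (not just the line/conic cases written above) by the same substitution. For the fixed small values of $n$ relevant here this is a finite, if tedious, check. If instead ``general position'' is read as ``very general'' (the complement of a countable union of proper subvarieties of $(\mathbb{P}^2)^n$), I would bypass the combinatorics entirely: the relabelled transformation $(p_1,\ldots,p_n)\mapsto(p_1,p_2,p_3,\sigma(p_4),\ldots,\sigma(p_n))$ is a birational involution $c$ of $(\mathbb{P}^2)^n$ defined over the base field, and a very general $q$ cannot satisfy $c(q)\in Z$ for a proper subvariety $Z$, since then $q=c(c(q))\in\overline{c(Z)}$, again proper --- contradicting very-generality. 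The honest-points statement and the effectivity-invariance principle are immediate; it is only the explicit stability of the list of excluded classes that requires care.
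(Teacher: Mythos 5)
Your proposal is correct, and its first half --- no infinitely near points among the $p_i^\prime$ because the quadratic transformation is an isomorphism off the three lines joining the base points, which $p_4,\ldots,p_n$ avoid by general position --- is exactly the paper's argument. Where you diverge is the general-position half. The paper dispatches it in one sentence: since $\mathrm{PGL}(3)$ acts transitively on non-collinear triples, one may normalise $p_1,p_2,p_3$ to the coordinate points, and the resulting standard quadratic involution carries a generic tuple to a generic tuple; nothing more is said. You offer two substitutes: (a) a Weyl-group computation showing that the forbidden effective classes (lines through three points, conics through six, the classes $\mathcal{E}_i-\mathcal{E}_j$, \ldots) are permuted by the substitution of Lemma~\ref{Crem-lem}, each image being non-effective by the hypotheses on the $p_i$; and (b) the observation that the induced birational involution of $(\mathbb{P}^2)^n$ preserves very-generality. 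Your computations in (a) are correct as far as they go, but, as you concede, they cover only one family of classes and the full list is never pinned down --- a defect the paper shares, since it never defines ``general position'' precisely either. Argument (b) is essentially a rigorous rendering of the paper's $\mathrm{PGL}(3)$ remark (one should say that a very general point avoids the countably many sets $\overline{c(Z)}$ as $Z$ runs over the defining family, rather than ``any proper subvariety,'' but that is the standard fix). In short: the paper buys brevity at the price of leaving the genericity transfer implicit; your route (b) makes that transfer precise, and your route (a) would, if completed, yield an effective open condition rather than a very-general one.
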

\begin{proof}
A quadratic Cremona transformation as described in Lemma~\ref{Crem-lem}, case (i), induces a birational map from 
$\mathbb{P}^2$
onto itself, which is an isomorphism outside the lines connecting the $3$ base points. Since $p_4, \ldots, p_n$ are in general
position, they do not lie on these lines. The claim follows because $3$ points on $\mathbb{P}^2$ can be freely moved around
by the action of $\mathrm{PGL}(3)$, hence are always in general position. 
\end{proof}

\noindent In some situations some of the points $p_4, \ldots, p_n$ will not be in general position relative to $p_1, p_2, p_3$.
We collect the configurations relevant in the constructions below: 

\vspace{0.2cm}

\noindent
\textbf{Cremona Transformation I.} Let $p_1, \ldots, p_5$ be points on $\mathbb{P}^2$ such that
\begin{itemize}
\item $p_1, p_2, p_4$ are not collinear, 
\item $p_3$ is infinitely near to $p_2$, directed to $p_1$ and
\item $p_5$ is infinitely near to $p_4$, directed to $p_1$. 
\end{itemize}
After a Cremona transformation with base points $p_2, p_3, p_4$, the new exceptional configuration is again of the type
above, with base points $p_2^\prime, p_3^\prime, p_4^\prime$, and the infinitely near points $p_3^\prime, p_1^\prime$ 
are directed to $p_5^\prime$. This can be read off the following diagram:

\vspace{0.2cm}

\begin{center}
\scalebox{0.5}{ \input{Cremona5.pstex_t} }
\end{center}

\vspace{0.2cm}

\noindent
\textbf{Cremona Transformation II.} Let $p_1, \ldots, p_5$ be points on $\mathbb{P}^2$ such that
\begin{itemize}
\item $p_1, p_2, p_4$ are not collinear, 
\item $p_3$ is infinitely near to $p_2$, but not directed to $p_1$ or $p_4$ and
\item $p_5$ is infinitely near to $p_4$, but not directed to $p_1$ or $p_2$. 
\end{itemize}
After two Cremona transformation with base points $p_1, p_2, p_4$ and $p_1^\prime, p_3^\prime, p_5^\prime$, the new exceptional configuration is again of the type
above, with base points $p_1^{\prime\prime}, p_3^{\prime\prime}, p_4^{\prime\prime}$, and the infinitely near points 
$p_2^{\prime\prime}, p_4^{\prime\prime}$ are not directed to $p_1^{\prime\prime}, p_4^{\prime\prime}$ resp.
$p_1^{\prime\prime}, p_3^{\prime\prime}$. This can be read off the following diagram:

\vspace{0.2cm}

\begin{center}
\scalebox{0.5}{ \input{Cremona4.pstex_t} }
\end{center}

\vspace{0.2cm}

\noindent
\textbf{Cremona Transformation III.} Let $p_1, \ldots, p_5$ be points on $\mathbb{P}^2$ such that
\begin{itemize}
\item No three of the points $p_1, p_2, p_3, p_4$ are collinear, 
\item $p_5$ is infinitely near to $p_4$, directed to $p_1$. 
\end{itemize}
After a Cremona transformation with base points $p_1, p_2, p_3$ the point $p_5^\prime$ infinitely near to $p_4^\prime$ is
directed to $p_1^\prime$. This can be read off the following diagram:

\vspace{0.2cm}

\begin{center}
\scalebox{0.5}{ \input{Cremona6.pstex_t} }
\end{center}

\vspace{0.2cm}

\noindent Finally, $p_1, \ldots, p_n$ might not be in general position because they lie on a special curve $C$. This curve can
be interpreted as the section of a line bundle $\mathcal{L}(d;m_1, \ldots, m_n)$, which is transformed by a Cremona transformation --- or several of them --- to $\mathcal{L}(d^\prime;m_1^\prime, \ldots, m_n^\prime)$ resp. $C^\prime$. Thus we
can translate the special position of the $p_1, \ldots, p_n$ into a special position of the $p_1^\prime, \ldots, p_n^\prime$. 

\subsection{Throwing curves}

\noindent Consider the setting of Prop.~\ref{degen-prop}: Let $f: \mathcal{X} \rightarrow \Delta$ be a family of complex varieties
over the unit disc $\Delta$, and let $\mathcal{L}$ be a line bundle on $\mathcal{X}$. Then the cohomology group 
$H^1(X_t, L_t)$ vanishes if $H^1(X_0, L_0) = 0$. To show this we want to apply the Gluing Lemma~\ref{glue-lem}. Its
application  fails if $H^1(V,\mathcal{L}_{|V}) \neq 0$ on an irreducible component $V \subset X_0$. For $\mathbb{P}^2$ or Hirzebruch surfaces $\mathbb{F}_k$ blown up in several points
this is the case if there exists a $(-1)$-curve $E$ on $V$ such that 
$\mathcal{L}_{|V}.E \leq -2$ and $\mathcal{L}_{|V}$ has a global section:
\begin{lem}[\cite{Har85, CilMir98}] \label{spec-lem}
Let $\mathbb{F}$ be $\mathbb{P}^2$ or a Hirzebruch surface $\mathbb{F}_k$, and $V = \mathbb{F}(p_1, \ldots, p_n)$ be a blow
up of $n$ points in $\mathbb{F}$. Let $L$ be an effective line bundle on $V$.
Assume that $E$ is a $(-1)$-curve on $V$ such that $L.E \leq -2$. Then:
\[ H^1(V, L) \neq 0. \]
\end{lem}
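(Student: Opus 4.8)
The plan is to produce a nonzero class in $H^1(V,L)$ by comparing $L$ with the line bundle obtained after stripping $E$ off as a fixed component, and then controlling the error terms via Riemann--Roch. Throughout set $N := -L.E \geq 2$; since $E$ is a $(-1)$-curve we have $E \cong \mathbb{P}^1$, $E^2 = -1$ and $K_V.E = -1$ by adjunction.

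First I would show that $E$ sits in the base locus of $|L|$ with multiplicity at least $N$. For $0 \leq j \leq N-1$ the restriction $(L - jE)_{|E}$ is a line bundle of degree $(L-jE).E = -N+j \leq -1$ on $E \cong \mathbb{P}^1$, hence has no global sections. Feeding this into the restriction sequences $0 \to L-(j+1)E \to L-jE \to (L-jE)_{|E} \to 0$ gives isomorphisms $H^0(V,L-jE) \cong H^0(V,L-(j+1)E)$, and chaining them from $j=0$ to $j=N-1$ yields $h^0(V,L) = h^0(V, L-NE)$. In particular $M := L - NE$ is again effective, because $L$ is.

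Next I would apply Riemann--Roch on $V$: since the $\chi(\mathcal{O}_V)$ terms cancel, $\chi(L) - \chi(M) = \tfrac{1}{2}\big(L.(L-K_V) - M.(M-K_V)\big)$, and expanding $L = M + NE$ while using $M.E = 0$, $E^2 = -1$ and $K_V.E = -1$ this collapses to the single number $-\binom{N}{2}$. Substituting $h^0(L) = h^0(M)$ into $\chi = h^0 - h^1 + h^2$ for both bundles then gives the exact identity
\[ h^1(V,L) = h^1(V,M) + h^2(V,L) + \binom{N}{2} - h^2(V,M). \]

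The crux, which I expect to be the main obstacle, is to prevent the correction $h^2(V,M)$ from absorbing the gain $\binom{N}{2}$; this is the step that turns a numerical identity into strict positivity. Here I would use that $V$ is a rational surface (it is a blow-up of $\mathbb{P}^2$ or $\mathbb{F}_k$), so $p_g(V) = h^0(V,K_V) = 0$. By Serre duality $h^2(V,M) = h^0(V, K_V - M)$, and if $K_V - M$ were effective then, $M$ being effective, $K_V$ would be linearly equivalent to a sum of two effective divisors and hence effective, contradicting $p_g(V)=0$. Thus $h^2(V,M) = 0$, and since $h^1(V,M) \geq 0$ and $h^2(V,L) \geq 0$ the identity gives $h^1(V,L) \geq \binom{N}{2} \geq 1$, so $H^1(V,L) \neq 0$. (Conceptually the same computation can be packaged by contracting $E$ via $\sigma\colon V \to V'$ and writing $L = \sigma^\ast M(NE)$, with $\binom{N}{2}$ appearing as the length of $R^1\sigma_\ast \mathcal{O}_V(NE)$; I prefer the Riemann--Roch route since it keeps everything on $V$ and needs no separate length computation.)
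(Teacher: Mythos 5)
Your proof is correct, but it takes a genuinely different route from the paper's. The paper restricts $L$ to $E$ just once: from $0 \to L(-E) \to L \to L_{|E} \to 0$ it reads off a surjection $H^1(V,L) \twoheadrightarrow H^1(E,L_{|E}) \cong H^0(\mathbb{P}^1,\mathcal{O}(k-2))^\vee \neq 0$ (with $k=-L.E$), the key point being $H^2(V,L(-E)) = H^0(V,K_V-L+E)^\vee = 0$, which it obtains by pairing $K_V - L + E$ against a nef class $\mathcal{F}$ (a line or a ruling fiber) with $K_V.\mathcal{F}<0$ and $(L-E).\mathcal{F}\geq 0$ -- the latter because $E$ is a fixed component of $|L|$. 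You instead peel $E$ off $N = -L.E$ times to get $h^0(L)=h^0(M)$ for $M=L-NE$, compare Euler characteristics via Riemann--Roch (the computation $\chi(L)-\chi(M)=-\binom{N}{2}$ using $M.E=0$ is right), and kill $h^2(M)=h^0(K_V-M)$ using only that $M$ is effective and $p_g(V)=0$. Your $h^2$-vanishing argument is cleaner and more general -- it needs nothing beyond rationality of $V$, whereas the paper's needs the specific nef classes available on blow-ups of $\mathbb{P}^2$ and $\mathbb{F}_k$ -- and as a bonus you get the sharper estimate $h^1(V,L)\geq\binom{N}{2}$ rather than the paper's implicit $h^1(V,L)\geq N-1$. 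The price is the longer chain of restriction sequences and the Riemann--Roch bookkeeping, where the paper gets away with a single exact sequence.
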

\begin{proof}
Set $L.E = -k$, $k \geq 2$ an integer. From the short exact sequence 
\[ 0 \rightarrow L \otimes \mathcal{O}(-E) \rightarrow L \rightarrow L_{|E} \rightarrow 0 \]
we obtain  the following part of the long exact cohomology sequence:
\[ H^1(V, L) \rightarrow H^1(E, L_{|E}) \rightarrow H^2(V, L \otimes \mathcal{O}(-E)). \]
Next, we calculate
\[ H^1(E, L_{|E}) \cong H^1(\mathbb{P}^1, \mathcal{O}(-k)) \cong H^0(\mathbb{P}^1, \mathcal{O}(-2+k))^\vee \neq 0. \]
Since $L$ has a global section, $L.E < 0$ implies that $E$ is a fixed divisor in the associated linear system. Consequently,
$L-E$ is also effective , and $(L-E).\mathcal{F} \geq 0$ for every nef divisor $\mathcal{F}$. For 
$\mathbb{F} = \mathbb{P}^2$ let $\mathcal{F}$ be the pull
back of a line on $\mathbb{P}^2$, for $\mathbb{F} = \mathbb{F}_k$ let $\mathcal{F}$ be the pull back of a fiber in the 
$\mathbb{P}^1$-bundle $\mathbb{F}_k$. In both cases $K_V.\mathcal{F} < 0$, and this implies
\[ (K_V \otimes L^\vee \otimes \mathcal{O}(E)).\mathcal{F} < 0, \]
hence $K_V \otimes L^\vee \otimes \mathcal{O}(E))$ cannot be effective. Consequently,
\[ H^2(V, L \otimes \mathcal{O}(-E)) \cong H^0(V, K_V \otimes L^\vee \otimes \mathcal{O}(E))^\vee = 0. \] 
The lemma follows.
\end{proof}

\noindent The new idea in \cite{CM08} is to change the degeneration $f: \mathcal{X} \rightarrow \Delta$ and the line bundle
$\mathcal{L}$, whenever such a "bad" $(-1)$-curve as in the lemma occurs in one of the components of $X_0$, by flopping it.
Such a flop certainly exists if the $(-1)$-curve $E$ has normal bundle 
$N_{E/\mathcal{X}} \cong \mathcal{O}_{\mathbb{P}^1}(-1) \oplus \mathcal{O}_{\mathbb{P}^1}(-1)$: it is the Atiyah flop 
(see \cite[Ex.3-4-3]{Mat02}). This is not always the case, but the following lemma shows that the normal bundle is at least
always negative. Hence we can improve the normal bundle by blowing up $\mathcal{X}$ several times along $E$ resp. its
strict transforms, until it is possible to flop $E$. The flop contracts $E$ on the component $V$, but other curves pop up on
different components. Therefore this operation is called a "\textbf{throw}". 
\begin{lem}[Three-point formula]  \label{three-lem}
Let $f: \mathcal{X} \rightarrow \Delta$ be a projective fibration from a smooth $3$-fold $\mathcal{X}$ such that 
$X_0 = \bigcup V_i$ is a union of smooth projective surfaces. 

\noindent Suppose that $C \subset V_i$ is a $(-1)$-curve on $V_i$ not contained in any other component $V_j$, and set
$s := \sum_{i \neq j} C.V_j$. Then
\[ N_{C/\mathcal{X}} \cong N_{C/V_i} \oplus N_{V_i/\mathcal{X}|C} \cong 
   \mathcal{O}_{\mathbb{P}^1}(-1) \oplus \mathcal{O}_{\mathbb{P}^1}(-s). \] 
\end{lem}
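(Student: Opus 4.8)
The plan is to compare the two nested smooth subvarieties $C \subset V_i \subset \mathcal{X}$ and feed them into the standard normal bundle sequence
\[ 0 \rightarrow N_{C/V_i} \rightarrow N_{C/\mathcal{X}} \rightarrow N_{V_i/\mathcal{X}}|_C \rightarrow 0, \]
which is exact since all three are smooth. The left-hand term is immediate: as $C$ is a $(-1)$-curve on the surface $V_i$, it is isomorphic to $\mathbb{P}^1$ and $N_{C/V_i} = \mathcal{O}_{V_i}(C)|_C$ has degree $C^2 = -1$, so $N_{C/V_i} \cong \mathcal{O}_{\mathbb{P}^1}(-1)$. Thus the whole statement reduces to (a) identifying $N_{V_i/\mathcal{X}}|_C$ and (b) showing that the sequence splits.

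For step (a) I would use that $V_i$ is a component of the central fiber. The fiber $X_0 = f^{-1}(0)$ is cut out by $f^\ast t$, with $t$ the coordinate on $\Delta$, so $\mathcal{O}_{\mathcal{X}}(X_0) \cong \mathcal{O}_{\mathcal{X}}$; since the degeneration is reduced, $X_0 = \sum_j V_j$ as divisors, whence $\mathcal{O}_{\mathcal{X}}(\sum_j V_j)|_{V_i} \cong \mathcal{O}_{V_i}$ and therefore
\[ N_{V_i/\mathcal{X}} = \mathcal{O}_{\mathcal{X}}(V_i)|_{V_i} \cong \mathcal{O}_{V_i}\Big(-\sum_{j \neq i} (V_i \cap V_j)\Big). \]
Restricting to $C$ and taking degrees, I would observe that, because $C \subset V_i$ but $C \not\subset V_j$ for $j \neq i$, the ambient intersection number $C.V_j$ coincides with the intersection number of $C$ against the double curve $V_i \cap V_j$ computed on $V_i$. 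Summing over $j \neq i$ gives $\deg\big(N_{V_i/\mathcal{X}}|_C\big) = -\sum_{j \neq i} C.V_j = -s$, so $N_{V_i/\mathcal{X}}|_C \cong \mathcal{O}_{\mathbb{P}^1}(-s)$ as $C \cong \mathbb{P}^1$.

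For step (b) it remains to split the sequence $0 \rightarrow \mathcal{O}_{\mathbb{P}^1}(-1) \rightarrow N_{C/\mathcal{X}} \rightarrow \mathcal{O}_{\mathbb{P}^1}(-s) \rightarrow 0$. Its extension class lies in $\operatorname{Ext}^1(\mathcal{O}_{\mathbb{P}^1}(-s), \mathcal{O}_{\mathbb{P}^1}(-1)) \cong H^1(\mathbb{P}^1, \mathcal{O}_{\mathbb{P}^1}(s-1))$, and since each $C.V_j \geq 0$ (because $C \not\subset V_j$) we have $s \geq 0$, hence $s-1 \geq -1$ and this $H^1$ vanishes. The extension is therefore trivial and $N_{C/\mathcal{X}} \cong \mathcal{O}_{\mathbb{P}^1}(-1) \oplus \mathcal{O}_{\mathbb{P}^1}(-s)$, as claimed.

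The only genuinely subtle points I expect are matching the ambient intersection numbers $C.V_j$ with intersection numbers of $C$ against the double curves on $V_i$ (which is exactly where $C \not\subset V_j$ is used), and the implicit reducedness of $X_0$, without which the triviality $\mathcal{O}_{\mathcal{X}}(X_0) \cong \mathcal{O}_{\mathcal{X}}$ would only pin down $N_{V_i/\mathcal{X}}$ up to the multiplicities of the components. Everything else — the normal bundle sequence and the $H^1$-vanishing on $\mathbb{P}^1$ — is formal.
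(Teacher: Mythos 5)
Your proof is correct and follows essentially the same route as the paper: both identify $N_{V_i/\mathcal{X}}|_C \cong \mathcal{O}_{\mathbb{P}^1}(-s)$ from the triviality of $\mathcal{O}_{\mathcal{X}}(X_0)$ together with the decomposition $X_0 = \sum_j V_j$, and then invoke the normal bundle sequence for $C \subset V_i \subset \mathcal{X}$. Your explicit splitting step via $\mathrm{Ext}^1(\mathcal{O}_{\mathbb{P}^1}(-s),\mathcal{O}_{\mathbb{P}^1}(-1)) \cong H^1(\mathbb{P}^1,\mathcal{O}_{\mathbb{P}^1}(s-1)) = 0$ for $s \geq 0$ is a detail the paper leaves implicit, so nothing is missing.
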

\begin{proof}
Since $f$ is a fibration, 
$\mathcal{O}_{\mathcal{X}} = \mathcal{O}_{\mathcal{X}}(X_0) = \bigotimes \mathcal{O}_{\mathcal{X}}(V_j)$. Hence
$\mathcal{O}_C = \bigotimes \mathcal{O}_C(V_j)$, and 
\[ \mathcal{O}_C(V_i) = \bigotimes_{i \neq j} \mathcal{O}_C(-V_j) \cong \mathcal{O}_{\mathbb{P}^1}(-s). \]
The claim follows because there is a natural bundle surjection 
\[ N_{C/\mathcal{X}} \rightarrow N_{V_i/\mathcal{X}|C} \cong \mathcal{O}_C(V_i) \]
whose kernel is $T_{V_i|C}/T_C = N_{C/V_i} \cong \mathcal{O}_{\mathbb{P}^1}(-1)$.
\end{proof}

\noindent After the flop the strict transforms of other components besides $V$ can be singular. To analyse line bundles on
these singular surfaces, Ciliberto and Miranda use the desingularization given by the blow up part of the Atiyah flop. To avoid 
this additional technical difficulty we present the throwing procedure as a sequence of blow ups only.
\begin{const}[Throwing $(-1)$-curves]  \label{throw-const}
Let $f: \mathcal{X} \rightarrow \Delta$ be a projective fibration from a smooth complex $3$-fold $\mathcal{X}$ to the unit disc
$\Delta$ such that the central fiber $X_0$ is a union $\bigcup V_i$ of smooth projective surfaces. Let $\mathcal{L}$ be a line
bundle on $\mathcal{X}$.

\noindent Assume that $C$ is a $(-1)$-curve on a component $V_i$ such that for $j \neq i$ the intersection 
$C \cap V_j$ consists of $s_j$ points with multiplicity $1$, and $C \cap V_j \cap V_{j^\prime} = \emptyset$ for 
$j, j^\prime \neq i$. Set $n := \sum_{j \neq i} s_j$ and $l := - \mathcal{L}_{|V_i}.C$.

\noindent Then we construct a sequence of blow ups
\[ \widetilde{\mathcal{X}} = \mathcal{X}_n \stackrel{\pi_n}{\rightarrow} \mathcal{X}_{n-1} 
   \stackrel{\pi_{n-1}}{\rightarrow} \cdots 
                           \stackrel{\pi_2}{\rightarrow} \mathcal{X}_1 \stackrel{\pi_1}{\rightarrow} \mathcal{X}_0 = \mathcal{X} \]
such that the center of $\pi_1$ is $C \subset \mathcal{X}$ and for $k=2, \ldots, n$, the center $C_k \cong C$ of $\pi_k$ is the
intersection of the strict
transform $V_i^{(k-1)} \cong V_i$ of $V_i$ on $\mathcal{X}_{k-1}$ with the exceptional divisor $T_{k-1}$ of 
$\pi_{k-1}$.

\noindent Setting $\phi_k := \pi_k \circ \cdots \circ \pi_n$ we denote by $\widetilde{T}_k$ the strict $(\phi_{k+1})$-transform of
$T_k \subset \mathcal{X}_k$, for $k=1, \ldots, n-1$. Note that $\widetilde{T}_k \cong T_k$, by construction.

\noindent Finally we define a new line bundle on $\widetilde{\mathcal{X}}$:
\[ \widetilde{\mathcal{L}} := 
   \phi_n^\ast \mathcal{L} \otimes \mathcal{O}_{\widetilde{\mathcal{X}}}(\sum_{l=1}^n a_l \widetilde{T}_l). \]
This construction is called an \textbf{$n$-throw}. It has the following properties:
\begin{enumerate}
\item $T_k \cong \mathbb{F}_{n-k}$, the $(n-k)$th Hirzebruch surface.
\item $\widetilde{V}_j$ is the blow up of sequences of $n$ infinitely near points $p_1, \ldots, p_n$ over each point 
$p = p_1 \in C \cap V_j$, where $p_i$ is infinitely near to $p_{i-1}$ but not to $p_{i-2}$, $i \geq 3$. In particular the choice of 
$C$ fixes these points.
\item $\widetilde{T}_k \cap \widetilde{T}_{k^\prime} = \emptyset$ if $k < k^\prime - 1$, whereas 
$\widetilde{T}_k \cap \widetilde{T}_{k+1}$ is the curve at infinity on 
$\widetilde{T}_k \cong \mathbb{F}_{n-k}$, and is a section of $\widetilde{T}_{k+1}$ not intersecting its curve at infinity 
$E_{n-k-1}$ and linearly equivalent to $E_{n-k-1} + (n-k-1)F_{n-k-1}$ (see the notation in the introduction). 
\item $\widetilde{T}_k \cap \widetilde{V}_i = \emptyset$ if $k < n$, whereas $T_n \cap \widetilde{V}_i$ is 
$C$ on $\widetilde{V}_i \cong V_i$, and a horizontal $\mathbb{P}^1$-fiber on $T_n \cong \mathbb{P}^1 \times \mathbb{P}^1$.
\item For $j \neq i$ the intersection $\widetilde{T}_k \cap \widetilde{V}_j$ consists of the exceptional divisors of the $k$th blow
ups over points in $C \cap V_j$ on $\widetilde{V}_j$, which are $\mathbb{P}^1$-fibers of 
$\widetilde{T}_k \cong \mathbb{F}_{n-k}$ on $\widetilde{T}_k$.
\item The strict transform of an irreducible intersection curve in $V_i \cap V_j$ is linearly equivalent to the pullback of the
intersection curve minus the exceptional divisors over points in $V_j \cap C$ on this curve.
\item $\widetilde{\mathcal{L}}_{|\widetilde{V}_i} = \phi_n^\ast \mathcal{L}_{|\widetilde{V}_i} \otimes 
           \mathcal{O}_{\widetilde{V}_i}(a_n C_n)$, and this line bundle is trivial on $C_n$ iff $a_n = -k$.
\item If $\mathcal{E}_1, \ldots, \mathcal{E}_n$ is the configuration of exceptional divisors over an intersection point in 
$V_j \cap C$ for $j \neq i$, the divisor $\mathcal{E}_m$ occurs in $\widetilde{\mathcal{L}}_{|\widetilde{V}_j}$ with multiplicity
$- a_m + a_{m-1}$ (in the notation of the introduction).
\item $\widetilde{\mathcal{L}}_{|T_n} \cong \mathcal{O}_{\mathbb{P}^1 \times \mathbb{P}^1}(-k-a_n, a_{n-1}-a_n)$.
\item For $k < n$, 
\[ \widetilde{\mathcal{L}}_{|\widetilde{T}_k} \cong 
         \mathcal{O}_{\mathbb{F}_{n-k}}((a_{k+1}-2a_k+a_{k-1})E_{n-k} + (-l - a_k(n-k+1) + a_{k-1}(n-k))F_{n-k}). \]    
\end{enumerate} 
\end{const}
\begin{proof}
The Three-point Formula~\ref{three-lem} yields
\[ N_{C/\mathcal{X}} = N_{C/V_i} \oplus N_{V_i/\mathcal{X}|C} \cong 
   \mathcal{O}_{\mathbb{P}^1}(-1) \oplus \mathcal{O}_{\mathbb{P}^1}(-n). \]
If we projectivize $N_{C/\mathcal{X}}$ the intersection curve of this $\mathbb{P}^1$-bundle with $V_i^{(1)}$ comes from the 
summand $\mathcal{O}_{\mathbb{P}^1}(-1)$ hence is the curve at infinity on $T_1 \cong \mathbb{F}_{n-1}$.
\[ N_{C_k/\mathcal{X}_{k-1}} = N_{C_k/V_i^{(k-1)}} \oplus N_{C_k/T_{k-1}}, \]
because $V_i^{(k-1)}$ and $T_{k-1}$ intersect transversally in $C_k \cong \mathbb{P}^1$,
\[ N_{C_k/T_{k-1}} \cong \mathcal{O}_{\mathbb{P}^1}(-n+k-1), \]
because by induction $C_k$ is the curve at infinity of $T_{k-1} \cong \mathbb{F}_{n-k+1}$. Consequently,
\[ N_{C_k/\mathcal{X}_{k-1}} \cong  \mathcal{O}_{\mathbb{P}^1}(-1) \oplus \mathcal{O}_{\mathbb{P}^1}(-n+k-1), \]
which yields (1).

\noindent (2), (3), (4), (5) follow from construction, (6) is true because $C$ intersects the components $V_j$, $j \neq i$, 
transversally. 

\noindent (7) is obvious from the definition of $\widetilde{\mathcal{L}}$, and the intersection configurations described in (4).

\noindent (8) follows from (6) and the fact that $\mathcal{E}_m$ contains every exceptional divisor $E_{m^\prime}$ (resp. its
strict transform) exactly once, if $m^\prime \geq m$. 

\noindent (9) is the result of the following calculation:
\begin{eqnarray*}
\widetilde{\mathcal{L}}_{|T_n} & = &\phi_n^\ast \mathcal{L}_{|T_n} \otimes \mathcal{O}_{T_n}(a_n T_n) \otimes  
                                                                                                                     \mathcal{O}_{T_n}(a_{n-1} \widetilde{T}_{n-1}) \\
  & \cong & \mathcal{O}_{\mathbb{P}^1 \times \mathbb{P}^1}(\mathcal{L}_{|V_i}.C, 0) \otimes 
                  \mathcal{O}_{\mathbb{P}^1 \times \mathbb{P}^1}(-a_n, -a_n) \otimes 
                  \mathcal{O}_{\mathbb{P}^1 \times \mathbb{P}^1}(0, a_{n-1})
\end{eqnarray*} 
because
\begin{eqnarray*}
 \mathcal{O}_{T_n}(T_n) & = & N_{T_n/\widetilde{\mathcal{X}}} = \mathcal{O}_{\mathbb{P}(N_{C_n/\mathcal{X}_{n-1}})} = 
   \mathcal{O}_{\mathbb{P}(\mathcal{O}_{\mathbb{P}^1}(1) \oplus \mathcal{O}_{\mathbb{P}^1}(1))}(-1) = \\
  & = & \mathcal{O}_{\mathbb{P}(\mathcal{O}_{\mathbb{P}^1} \oplus \mathcal{O}_{\mathbb{P}^1})}(-1) \otimes 
   p_1^\ast \mathcal{O}_{\mathbb{P}^1}(-1) = \mathcal{O}_{\mathbb{P}^1 \times \mathbb{P}^1}(-1, -1). 
\end{eqnarray*}

\noindent (10) is a consequence of
\[ \widetilde{\mathcal{L}}_{|\widetilde{T}_k} = \phi_n^\ast \mathcal{L}_{|\widetilde{T}_k} \otimes 
   \mathcal{O}_{\widetilde{T}_k}(a_{k+1} \widetilde{T}_{k+1}) \otimes  
   \mathcal{O}_{\widetilde{T}_k}(a_k \widetilde{T}_k) \otimes
   \mathcal{O}_{\widetilde{T}_k}(a_{k-1} \widetilde{T}_{k-1}), \]
\begin{eqnarray*} 
   \lefteqn{\phi_n^\ast \mathcal{L}_{|\widetilde{T}_k} \otimes \mathcal{O}_{\widetilde{T}_k}(a_{k+1} \widetilde{T}_{k+1}) 
                \otimes \mathcal{O}_{\widetilde{T}_k}(a_{k-1} \widetilde{T}_{k-1}) \cong} \\ 
    & & \mathcal{O}_{\mathbb{F}_{n-k}}(-l F_{n-k}) \otimes \mathcal{O}_{\mathbb{F}_{n-k}}(a_{k+1} E_{n-k}) \otimes 
           \mathcal{O}_{\mathbb{F}_{n-k}}(a_{k-1}(E_{n-k} + (n-k) F_{n-k})), 
\end{eqnarray*}
\[ \widetilde{T}_k = \phi_{k+1}^\ast T_k - \widetilde{T}_{k+1} - \ldots - T_n \]
and
\[ \mathcal{O}_{T_k}(T_k) \cong \mathcal{O}_{\mathbb{F}_{n-k}}(-E_{n-k} - (n-k+1) F_{n-k}). \] 
\end{proof}

\begin{rem}
Note that in our description of a throw, we do not contract the "bad" $(-1)$-curve $C$ and push down the line bundle 
$\mathcal{L}$, but we only change the line bundle until it is trivial on $C$.
\end{rem}

\begin{rem}
In the examples of throws below we choose $a_1, \ldots, a_{n-1}$ such that the restrictions of the line bundle $\widetilde{L}$
to the exceptional divisors $\widetilde{T}_k$ become minimal.
\end{rem}

\begin{rem}
Ciliberto and Miranda \cite{CM08} only need $1$- and $2$-throws. But we will see in Section~\ref{Alg-sec} that
more blow ups can be necessary. 
\end{rem}

\subsection{Bounds from linear inequalities}

\noindent Applying the Gluing Lemma~\ref{glue-lem} requires the vanishing of $H^1(V_i, \mathcal{L}_{|V_i})$ on components 
$V_i$ of $X_0$. In the Ciliberto-Miranda degenerations constructed in section~\ref{CM-deg-sec} below, 
$V_i$ is always a blow up of $\mathbb{P}^2$ or a Hirzebruch surface $\mathbb{F}_k$ in points $p_1, \ldots, p_n$, and
$\mathcal{L}_{|V_i} \cong \mathcal{L}(d; m_1, \ldots, m_n)$  resp.\ 
$\mathcal{L}_{|V_i} \cong \mathcal{L}(d_1,d_2; m_1, \ldots, m_n)$ is linearly depending on parameters $d$ resp.\ $d_1,d_2$,
$m_1, \ldots, m_n$. After possibly
performing some Cremona transformations, we would like to use the
criteria in section~\ref{non-spec-sec} to deduce the vanishing of the first cohomology group.

\noindent It turns out in section~\ref{CM-deg-sec} that this is possible on the occuring varieties 
whenever the integers $d$ resp. $d_1, d_2$, 
$m_1, \ldots, m_n$ satisfy a set of linear inequalities. Together with the Gluing Lemma, this observation can be used to find
$d,m$ arbitrarily big such that $\mathcal{L}(d;m^n)$ is non-special on $\mathbb{P}^2(p_1, \ldots, p_n)$, with $p_1, \ldots, p_n$
in general position:
\begin{thm} \label{Ineq-thm}
Let $f: \mathcal{X} \rightarrow \Delta$ be a projective fibration from a smooth complex $3$-fold such that for $t \neq 0$, 
$X_t \cong \mathbb{P}^2(p_1, \ldots, p_n)$, $n > 9$, with $p_1, \ldots, p_n$ in general position, and 
$X_0 \cong \bigcup V_i$, all the $V_i \cong \mathbb{P}^2(q_1^{(i)}, \ldots, q_{n_i}^{(i)})$ or
$\cong \mathbb{F}_k(q_1^{(i)}, \ldots, q_{n_i}^{(i)})$. Furthermore, denote by $C_i$ the intersection curve 
$\bigcup_{j < i} V_j \cap V_i$. 

\noindent Suppose that there exists $k \in \mathbb{N}$ such that for every $d, m, a \in k \cdot \mathbb{N}$,
we can construct a line bundle 
$\mathcal{L} = \mathcal{L}(d,m,a)$ satisfying the following conditions:
\begin{itemize}
\item $\mathcal{L}_{|X_t} \cong \mathcal{L}(d;m^n)$ for $t \neq 0$, 
\item $\mathcal{L}_{|V_i} \cong \mathcal{L}(d^{(i)}; m_1^{(i)}, \ldots, m_{n_i}^{(i)})$ resp. 
$\mathcal{L}(d^{(i)}_1, d^{(i)}_2; m_1^{(i)}, \ldots, m_{n_i}^{(i)})$, the $d^{(i)}$ resp. $d^{(i)}_1, d^{(i)}_2$, 
$m_1^{(i)}, \ldots, m_{n_i}^{(i)}$ depending linearly on $d,m,a$, and
\item if the $d^{(i)}$ resp. $d^{(i)}_1, d^{(i)}_2$, $m_1^{(i)}, \ldots, m_{n_i}^{(i)}$ satisfy a finite set of weak linear inequalities 
then 
\[ H^1(V_i, \mathcal{L}_{|V_i}) = H^1(V_i, \mathcal{L}_{|V_i} \otimes \mathcal{O}(-C_i)) = 0. \] 
\end{itemize}
Substituting $d, m, a$ in the $d^{(i)}$ resp. $d^{(i)}_1, d^{(i)}_2$, $m_1^{(i)}, \ldots, m_{n_i}^{(i)}$, we can consider the closed
convex polyhedron 
$P \subset \mathbb{R}^3$ described by the resulting weak linear inequalities in $d, m, a$, and its projection 
$P^\prime \subset \mathbb{R}^2$ onto the $d-m$-coordinates. Set
\[ \mu := \inf \left\{ \frac{d}{m}: (d,m) \in P^\prime \right\}. \]
If $P^\prime$ is unbounded, both in $d$ and in $m$, then there exist $\epsilon > 0$, $M > 0$, such that for all integers 
$d, m > M$ with $d,m \in k \cdot \mathbb{N}$ and $0 \leq \frac{d}{m} - \mu \leq \epsilon$, the line bundle 
$\mathcal{L}(d; m^n)$ is a non-special line bundle on the blow
up of $\mathbb{P}^2$ in $n$ points in general position.

\noindent If $\mu > \sqrt{n}$ then the line bundle $\mathcal{L}(d; m^n)$ is furthermore effective, for $M \gg 0$. 
\end{thm}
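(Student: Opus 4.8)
The plan is to feed each admissible parameter value through the vanishing machinery of Section~\ref{non-spec-sec} and the Gluing Lemma, and then to run a short convex-geometry argument on the polyhedron $P$ to locate the parameters with $d/m$ near $\mu$.

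\emph{Step 1 (vanishing on the central fibre and descent to $X_t$).} Fix $(d,m,a)\in P\cap(k\mathbb{N})^3$ and let $\mathcal{L}=\mathcal{L}(d,m,a)$. Order the components as $V_1,\dots,V_N$ so that, writing $W_{i-1}=\bigcup_{j<i}V_j$, the intersection scheme $C_i=V_i\cap W_{i-1}$ is exactly the curve $C_{i-1}$ of Lemma~\ref{glue-lem}. Because the components meet along curves without embedded points, $\mathcal{I}_{C_i/V_i}=\mathcal{O}_{V_i}(-C_i)$, so the hypothesis $H^1(V_i,\mathcal{L}_{|V_i}\otimes\mathcal{O}(-C_i))=0$ gives, via Remark~\ref{diffmap-rem}, the surjectivity required in condition (ii) of the Gluing Lemma, while $H^1(V_i,\mathcal{L}_{|V_i})=0$ is condition (i). Hence $H^1(X_0,L_0)=0$, and Proposition~\ref{degen-prop} yields $H^1(X_t,L_t)=H^1\bigl(\mathbb{P}^2(p_1,\dots,p_n),\mathcal{L}(d;m^n)\bigr)=0$ for general $t$, that is, for $n$ points in general position.

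\emph{Step 2 ($h^1=0$ implies non-special).} On $X=\mathbb{P}^2(p_1,\dots,p_n)$ one has $K_X=-3\mathcal{E}_0+\sum\mathcal{E}_i$, so by Serre duality $h^2(\mathcal{L}(d;m^n))=h^0(K_X-\mathcal{L}(d;m^n))$; since $(K_X-\mathcal{L}(d;m^n)).\mathcal{E}_0=-3-d<0$ and $\mathcal{E}_0$ is nef, this group vanishes. Therefore $h^0=\chi=\tfrac{d(d+3)}{2}-n\tfrac{m(m+1)}{2}+1$, which is the expected value (and in particular $\geq 0$), so $\mathcal{L}(d;m^n)$ is non-special.

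\emph{Step 3 (covering the parameters near $\mu$; the main obstacle).} In the constructions the coefficients $d^{(i)},d^{(i)}_1,d^{(i)}_2,m^{(i)}_j$ are homogeneous integral linear forms in $(d,m,a)$, so $P$ is a rational closed convex cone and $P'$ its image, a closed cone contained in $\{d\geq\mu m\}$ by definition of $\mu$. Since $P'$ is full-dimensional and unbounded in both $d$ and $m$, the value $\mu$ is attained along an extreme ray of $P'$, whose opposite extreme ray has ratio $\nu>\mu$ (possibly $\nu=+\infty$); choosing $0<\epsilon<\nu-\mu$ makes the whole sector $\{\,d,m>0,\ \mu\leq d/m\leq\mu+\epsilon\,\}$ lie in $P'$. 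For such $(d,m)$ the fibre $\{a:(d,m,a)\in P\}$ is a nonempty interval whose length grows linearly in $m$, so after absorbing the relevant congruences into $k$ (a Gordan-type statement for the rational cone $P$) it contains a point $a\in k\mathbb{N}$ once $m>M$. Reconciling the infimum $\mu$ with the recession geometry of $P'$ and guaranteeing the integrality of the lift $a$ is the technical heart of the argument; this is where the hypotheses that $P'$ be unbounded in both $d$ and $m$ and that everything scale by $k$ are used. Running Steps~1--2 for this $(d,m,a)$ then proves non-speciality of $\mathcal{L}(d;m^n)$ for every $(d,m)\in(k\mathbb{N})^2$ with $d,m>M$ and $0\leq d/m-\mu\leq\epsilon$.

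\emph{Step 4 (effectivity when $\mu>\sqrt n$).} If $\mu>\sqrt n$ then $d/m\geq\mu$ gives $d^2-nm^2\geq(\mu^2-n)m^2$, whence $\chi-1=\tfrac12(d^2+3d-nm^2-nm)\geq\tfrac12\bigl((\mu^2-n)m^2-nm\bigr)>0$ as soon as $m>n/(\mu^2-n)$, which is guaranteed by enlarging $M$. Together with $h^1=h^2=0$ from Steps~1--2 this forces $h^0=\chi>0$, so $\mathcal{L}(d;m^n)$ is effective.
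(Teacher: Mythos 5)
Your proposal follows essentially the same route as the paper's proof: the Gluing Lemma~\ref{glue-lem} together with Remark~\ref{diffmap-rem} on the central fibre, semicontinuity (Proposition~\ref{degen-prop}) to pass to $X_t$, vanishing of $h^2$ via intersection of $K_{X_t}\otimes L_t^\vee$ with the nef class $\mathcal{E}_0$, and Riemann--Roch for effectivity when $\mu>\sqrt{n}$. Your Step~3 spells out the convex-geometry and integrality-of-$a$ issues more explicitly than the paper, which simply asserts that unboundedness of $P'$ yields a supporting line of slope $\mu$ and the inclusion of the sector $\{d,m>M,\ 0\leq d/m-\mu\leq\epsilon\}$ in $P'$; both treatments share the same (harmless in the applications, where the relevant inequalities are essentially homogeneous) imprecision about points with $d/m=\mu$ exactly when that supporting line has positive intercept.
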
   
\begin{proof}
Note that for some positive integers
$c^{(i)}$ resp. $c^{(i)}_1, c^{(i)}_2$,  $n_1^{(i)}, \ldots, n_{n_i}^{(i)}$ not depending on $d, m, a$ the intersection curve $C_i$ is a
section of the line bundle $\mathcal{L}(c^{(i)}; n_1^{(i)}, \ldots, n_{n_i}^{(i)})$ on $\mathbb{P}^2(q_1^{(i)}, \ldots, q_{n_i}^{(i)})$. resp. $\mathcal{L}(c^{(i)}_1,c^{(i)}_2; n_1^{(i)}, \ldots, n_{n_i}^{(i)})$ on $\mathbb{F}_k(q_1^{(i)}, \ldots, q_{n_i}^{(i)})$. Consequently the vanishing of 
$H^1(V_i, \mathcal{L}_{|V_i} \otimes \mathcal{O}(-C_i))$ can also be deduced from a set of weak linear inequalities depending
on $d, m, a$.

\noindent The unboundedness implies that there is a line with slope $\mu$ bounding the convex polytope $P^\prime$ from
below in the
region $\{ m \geq M \}$, and an $\epsilon > 0$ such that all pairs $(d,m)$ with $d,m > M$, 
$0 \leq \frac{d}{m} - \mu \leq \epsilon$ lie in $P^\prime$. For such pairs $(d,m) \in k \cdot \mathbb{N}^2$, the assumptions tell
us
\[ H^1(V_i, \mathcal{L}_{|V_i}) = H^1(V_i, \mathcal{L}_{|V_i} \otimes \mathcal{O}(-C_i)) = 0, \]
hence both conditions of the Gluing Lemma~\ref{glue-lem} are satisfied (use Remark~\ref{diffmap-rem} for the surjectivity of the
difference map). Consequently $H^1(X_t, \mathcal{L}_t) = 0$ for $t \in \Delta$ general, by Prop.~\ref{degen-prop}.

\noindent Since $\mathcal{E}_0.(K_{X_t} \otimes L_t^\vee) = -3 - d < 0$, the divisor $K_{X_t} \otimes L_t^\vee$ cannot be 
effective, and $h^2(X_t, L_t) = h^0(X_t, K_{X_t} \otimes L_t^\vee) = 0$. Furthermore, 
$p_a = h^0(X_t, \mathcal{O}_{X_t}) = h^0(\mathbb{P}^2, \mathcal{O}_{\mathbb{P}^2}) = 1$ because $X_t$ is a blow up of
$\mathbb{P}^2$ (\cite[Prop.V.3.4]{Hart:AG}). Consequently, Riemann-Roch implies
\[ h^0(X_t, L_t) = \frac{1}{2} L_t.(L_t - K_{X_t}) = \frac{1}{2} ( d(d+3) - n \cdot m(m+1) ) + 1 > 0 \]
for $d > \sqrt{n} m$ and $d, m \gg 0$.
\end{proof}

\begin{rem}
The $k$ must be introduced because the $d^{(i)}$ resp. $d^{(i)}_1, d^{(i)}_2$, $m_1^{(i)}, \ldots, m_{n_i}^{(i)}$ can linearly 
depend on $d, m, a$ with rational
coefficients. Then $k$ is a common denominator for all occuring fractions.  
\end{rem}

\begin{rem}
We can use $H^1(V_1, \mathcal{L}_{|V_1} \otimes \mathcal{O}(-C_2)) = 0$ instead of 
$H^1(V_2, \mathcal{L}_{|V_2} \otimes \mathcal{O}(-C_2)) = 0$ to show the surjectivity of the first difference map.
\end{rem}

\noindent We can use the information obtained from the last theorem to deduce lower bounds for Seshadri constants:
\begin{prop}
Assume that for all $\epsilon > 0$, $M \gg 0$ there exist $d, m > M$ with $0 \leq \frac{d}{m} - \mu \leq \epsilon$ such that
$\mathcal{L}(d; m^n)$ is non-empty and non-special. Then the multi-point Seshadri constant for $n$ points in general position
on $\mathbb{P}^2$ is bounded by
\[ \epsilon(\mathbb{P}^2, \mathcal{O}_{\mathbb{P}^2}(1); p_1, \ldots, p_n) \geq \frac{1}{\mu}. \]
\end{prop}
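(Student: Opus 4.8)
The plan is to reduce the statement to the approximation result, Theorem~\ref{Approx-thm}, by manufacturing from the hypothesis a single sequence of linear systems whose degree-to-multiplicity ratios converge to $\mu$. Recall that the multi-point Seshadri constant is the supremum of those $t \geq 0$ for which the $\mathbb{R}$-divisor $\pi^\ast H - t\sum_{j=1}^n E_j$ is nef on $X$ (the reformulation recalled in the introduction), so it suffices to produce one nef $\mathbb{R}$-divisor of the form $\pi^\ast H - \tfrac{1}{\mu}\sum_{j=1}^n E_j$.

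First I would extract the sequence. For each $i \in \mathbb{N}$ I apply the hypothesis with the small parameter $\tfrac{1}{i}$ and threshold $M = i$, obtaining integers $d_i, m_i > i$ with $0 \leq \tfrac{d_i}{m_i} - \mu \leq \tfrac{1}{i}$ such that $\mathcal{L}(d_i; m_i^n)$ is non-empty and non-special. Since $m_i > i$ the multiplicities tend to infinity, so this is an honest asymptotic family, and the squeeze $\mu \leq \tfrac{d_i}{m_i} \leq \mu + \tfrac{1}{i}$ gives $\tfrac{d_i}{m_i} \to \mu$. Furthermore \emph{non-empty and non-special} is precisely \emph{non-empty of expected dimension}: non-speciality means $h^1(X, \mathcal{L}(d_i; m_i^n)) = 0$, while $h^2(X, \mathcal{L}(d_i; m_i^n)) = h^0(X, K_X \otimes \mathcal{L}(d_i; m_i^n)^\vee) = 0$ because $\mathcal{E}_0.(K_X \otimes \mathcal{L}(d_i; m_i^n)^\vee) = -3 - d_i < 0$ rules out effectivity, so the actual dimension equals the expected value $\chi(X, \mathcal{L}(d_i; m_i^n)) - 1$, which is $\geq 0$ by non-emptiness. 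Hence the sequence is eligible for Theorem~\ref{Approx-thm}.

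With this sequence in hand I would invoke Theorem~\ref{Approx-thm}: as $\tfrac{d_i}{m_i} \to \mu$, it yields that the $\mathbb{R}$-divisor $\pi^\ast H - \tfrac{1}{\mu}\sum_{j=1}^n E_j$ is nef on $X$. The asserted bound then follows from the nef characterisation of the Seshadri constant, or, spelled out, for any irreducible curve $C \subset \mathbb{P}^2$ of degree $e$ with $\mathrm{mult}_{p_j} C = r_j$ the strict transform satisfies $\widetilde{C} \equiv e\,\pi^\ast H - \sum_{j=1}^n r_j E_j$ and
\[ \left(\pi^\ast H - \tfrac{1}{\mu}\sum_{j=1}^n E_j\right).\widetilde{C} = e - \tfrac{1}{\mu}\sum_{j=1}^n r_j \geq 0, \]
so that $\tfrac{e}{\sum_j r_j} \geq \tfrac{1}{\mu}$; taking the infimum over all such $C$ gives $\epsilon(\mathbb{P}^2, \mathcal{O}_{\mathbb{P}^2}(1); p_1, \ldots, p_n) \geq \tfrac{1}{\mu}$.

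The main obstacle is entirely contained in Theorem~\ref{Approx-thm} itself, whose proof in \cite{Eck08} is what upgrades an asymptotic family of effective, non-special systems to genuine nefness of the boundary ray --- a conclusion that is false for any single member but holds in the limit. Within the present argument the only delicate point is ensuring that the hypothesis really supplies an infinite sequence with $m_i \to \infty$ and ratios converging to $\mu$ from above, each $\mathcal{L}(d_i; m_i^n)$ being simultaneously effective and non-special; the diagonal choice above secures this, and the passage from the nef $\mathbb{R}$-divisor to the Seshadri inequality is then immediate from the definition.
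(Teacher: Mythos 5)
Your proposal is correct and takes essentially the same route as the paper: extract a sequence $(d_i,m_i)$ with $m_i\to\infty$ and $\frac{d_i}{m_i}\to\mu$ from the hypothesis, check that non-empty plus non-special means non-empty of expected dimension, and feed this into Theorem~\ref{Approx-thm} to get nefness of $\pi^\ast H - \frac{1}{\mu}\sum_{j=1}^n E_j$ and hence the Seshadri bound. The paper's own proof is just a two-line condensation of this (noting additionally that $\frac{d_k}{m_k-1}\to\mu$ to match the precise hypothesis of the cited theorem), so your extra verifications are elaboration rather than a different argument.
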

\begin{proof}
From the assumptions we construct a sequence $(d_k,m_k)$ of
monotonely increasing integers with $d_k, m_k \rightarrow \infty$ for $k \rightarrow \infty$ such that 
$\frac{d_k}{m_k} \rightarrow \mu$ and $\mathcal{L}(d_k; m_k^n)$ is non-empty and non-special. Since also 
$\frac{d_k}{m_k-1} \rightarrow \mu$, we can apply Theorem~\ref{Approx-thm} from~\cite{Eck08}. 
\end{proof}

\section{Degenerations of $\mathbb{CP}^2$ blown up in $10$ points}  \label{CM-deg-sec}

\noindent As Ciliberto and Miranda in \cite{CM08} we exemplify their method on $\mathbb{P}^2$ blown up in $10$ points, thus
being 
able to compare the arguments. But of course, it can also be applied to $\mathbb{P}^2$ blown up in more than $10$ points.

\subsection{The First Degeneration} 

\noindent The starting point is a degeneration constructed by Ciliberto and Miranda in \cite{CilMir98}: Blow up 
$\mathbb{P}^2 \times \Delta$ in a point $p \in \mathbb{P}^2 \times \{0\}$, and obtain the projective fibration 
$\pi: \mathcal{X} \rightarrow \Delta$. Its central fibre decomposes into the exceptional divisor $P_p \cong \mathbb{P}^2$ and
$F_p \cong \mathbb{P}^2$, the strict transform of $\mathbb{P}^2 \times \{0\}$. 

\noindent Choose $10$ sections $p_1, \ldots, p_{10}: \Delta \rightarrow \mathcal{X}$ such that $p_1(0), \ldots, p_4(0) \in P_p$ resp. $p_5(0), \ldots, p_{10}(0) \in F_p$ are $4$ resp. $6$ points in general position. In particular, $p, p_5(0), \ldots, p_{10}(0)$
are $7$ points in general position on $\mathbb{P}^2$. By possibly shrinking $\Delta$ we can assume w.l.o.g. that for all 
$t \in \Delta$ the points $p_1(t), \ldots, p_{10}(t)$ are in general position on $\mathbb{P}^2$.

\noindent Blowing up the images $p_1(\Delta), \ldots, p_{10}(\Delta)$ of the sections yields a projective fibration 
$\pi_1: \mathcal{X}_1 \rightarrow \Delta$ such that 
\begin{itemize}
\item for all $t \in \Delta$, the fibre $X_{1,t} \cong \mathbb{P}^2(p_1, \ldots p_{10})$ with $p_1, \ldots, p_{10}$ in general position,
and
\item $X_0 = P_1 \cup F_1$ with $P_1 \cong \mathbb{P}^2(p_1, \ldots p_4)$ and 
$F_1 \cong \mathbb{P}^2(p, p_5, \ldots p_{10})$, all these points in general position.
\end{itemize}

\noindent Denote the exceptional divisor over $p_i(\Delta)$ by $E_i$.

\noindent $C_1 = P_1 \cap F_1$ is the pullback of a line on $P_1$, that is a section of $\mathcal{L}(1; 0^4)$, and the 
exceptional divisor over $p$ on $F_1$, that is a section of $\mathcal{L}(0; -1, 0^6)$.

\noindent From the construction of $\mathcal{X}_1$ we obtain a projection 
$f: \mathcal{X}_1 \rightarrow \mathcal{X} \rightarrow \mathbb{P}^2 \times \Delta \rightarrow \mathbb{P}^2$. For 
$d, m, a \in \mathbb{N}$ define a line bundle on $\mathcal{X}_1$ by
\[ \mathcal{L}_1 := f^\ast \mathcal{O}_{\mathbb{P}^2}(d) \otimes \mathcal{O}(-m\sum_{i=1}^{10}E_i) \otimes 
                              \mathcal{O}((2m+a)F_1). \]
Then $\mathcal{L}_{1|X_t} \cong \mathcal{L}(d; m^{10})$ for $t \neq 0$,
\[ \mathcal{L}_{1|P_1} \cong \mathcal{L}(2m+a; m^4), \]
where the $4$ points are in general position on $\mathcal{P}^2$, and
\[ \mathcal{L}_{1|F_1} \cong \mathcal{L}(d; 2m+a, m^6), \]
because $\mathcal{O}_{F_1}(F_1) \cong \mathcal{O}_{F_1}(-P_1)$, by the Three-point formula~\ref{three-lem} applied to
$X_{1,0} = P_1 \cup F_1$. The $7$ points on $\mathbb{P}^2$ can be assumed to lie in general position.

\noindent We assume $d > \sqrt{10}m$. To apply Theorem~\ref{Ineq-thm} we need 
$H^1(P_1, \mathcal{L}_{1|P_1}) = 0$, 
$H^1(F_1, \mathcal{L}_{1|F_1}) = 0$ and $H^1(F_1, \mathcal{L}_{1|F_1} \otimes \mathcal{O}(-C_1)) = 0$. 

\noindent For the vanishing on $P_1$ we choose an irreducible conic $C$ through the $4$ points blown up in $P_1$. The
strict transform of $C$ on $P_1$ is a section of $\mathcal{L}(2; 1^4)$. Since 
$\mathcal{L}(2; 1^4).(K_{P_1} \otimes \mathcal{L}(2; 1^4)) = \mathcal{L}(2; 1^4).\mathcal{L}(-1; 0^4) = -2 < 0$ and
$\mathcal{L}(2; 1^4).(\mathcal{L}_{1|P_1} \otimes \mathcal{O}(-iC)) = 2a \geq 0$, we can deduce 
$H^1(P_1, \mathcal{L}_{1|P_1}) = 0$ from 
$H^1(P_1, \mathcal{L}_{1|P_1} \otimes \mathcal{O}(-mC)) = H^1(P_1, \mathcal{L}(2a; 0^4)) = 0$, by Theorem~\ref{curvevan-crit}.

\noindent For the vanishing on $F_1$ we note first that $F_1$
is strongly anticanonical, as a blow up of $\mathbb{P}^2$ in less than $9$ points in general position. Next,
$\mathcal{L}_{1|F_1}.K_{F_1} = -3d + (2m + a) + 6m < 0$ for $a$ small enough. We perform Cremona
transformations on $\mathcal{L}_{1|F_1}$ changing the degree and multiplicities as follows:
\[ \begin{array}{cccc}
d; & \underline{2m+a}, &  & \underline{\underline{m}}^6\\
2d - 4m - a; & \underline{d - 2m}, & (d - 3m - a)^2, & \underline{\underline{m}}^4 \\
3d - 8m -2a; & \underline{2d - 6m - a}, & (d - 3m - a)^4, & \underline{\underline{m}}^2 \\
4d - 12m -3a; & 3d - 10m - 2a, & (d - 3m - a)^6. & 
\end{array} \]
Here, the underlinings indicate which $3$ points are used for the transformation.

\noindent After the Cremona transformations the intersection curve $C_1$ with $P_1$ on $F_1$ is a section of 
\[ \mathcal{L}(0;-1,0^6) \cong \mathcal{L}(1;0,1^2,0^4) \cong \mathcal{L}(2;1,1^4,0^2) \cong \mathcal{L}(3;2,1^6). \] 
Consequently, 
$\mathcal{L}_{1|F_1} \otimes \mathcal{O}(-C_1) \cong \mathcal{L}(4d - 12m -3a - 3; 3d - 10m - 2a - 2, (d - 3m - a - 1)^6)$. 
Both transformed line bundles are standard if the following inequalities are satisfied:
\[ \begin{array}{l}
4d - 12m -3a - 3 \geq 0,\ 3d - 10m - 2a - 2 \geq 0,\ d - 3m - a - 1 \geq 0, \\
4d - 12m - 3a  \geq  (3d - 10m - 2a) + 2(d - 3m - a) = 5d - 16m - 4a \\
4d - 12m - 3a  \geq  3(d - 3m - a) = 3d - 9m - 3a. 
\end{array} \]
The inequalities imply $d > \frac{10}{3} m$. On the other hand they are satisfied if $4m > d > \frac{10}{3} m$ and $a = 0$.
For such values of $d, m, a$ Harbourne's Criterion~\ref{Har-crit} implies the vanishing of the two $H^1$-groups. Consequently
we can apply Theorem~\ref{Ineq-thm} with $\mu = \frac{10}{3}$:
\begin{prop}
The multi-point Seshadri constant of $10$ points in general position is bounded from below by
\[ \epsilon(\mathbb{P}^2, \mathcal{O}_{\mathbb{P}^2}(1); p_1, \ldots, p_{10}) \geq \frac{3}{10}. \]
\end{prop}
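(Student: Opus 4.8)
The plan is to assemble the pieces already put in place in the discussion of the First Degeneration: the cohomological analysis there has verified the hypotheses of Theorem~\ref{Ineq-thm} for the fibration $\pi_1 : \mathcal{X}_1 \to \Delta$, so what remains is to read off the value of $\mu$ from the linear inequalities, confirm that the associated polyhedron is unbounded and that $\mu$ exceeds $\sqrt{10}$, and then invoke the Seshadri-bound Proposition stated immediately above.

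First I would collect the finite set of weak linear inequalities that force the vanishing of $H^1(P_1, \mathcal{L}_{1|P_1})$, of $H^1(F_1, \mathcal{L}_{1|F_1})$, and of $H^1(F_1, \mathcal{L}_{1|F_1} \otimes \mathcal{O}(-C_1))$. On $P_1$ the condition reduces to $a \geq 0$, while the standardness of the two Cremona-transformed bundles on $F_1$ forces $d > \tfrac{10}{3}m$; all of these hold simultaneously, for instance whenever $\tfrac{10}{3}m < d < 4m$ and $a = 0$. Forming the closed convex polyhedron $P \subset \mathbb{R}^3$ cut out by these inequalities and projecting to the $d$-$m$ plane, the infimal slope is
\[ \mu = \inf\left\{ \frac{d}{m} : (d,m) \in P^\prime \right\} = \frac{10}{3}. \]

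Next I would check the two numerical facts that make Theorem~\ref{Ineq-thm} applicable in its full strength. The strip $\tfrac{10}{3}m < d < 4m$ shows that $P^\prime$ is unbounded in both $d$ and $m$, so the theorem yields $\epsilon > 0$ and $M > 0$ such that $\mathcal{L}(d; m^{10})$ is non-special whenever $d, m > M$ lie in the admissible residue class $k \cdot \mathbb{N}$ and $0 \leq \tfrac{d}{m} - \mu \leq \epsilon$. Since $\mu = \tfrac{10}{3} > \sqrt{10}$, the same theorem promotes these bundles to effective ones for $M \gg 0$, hence they are simultaneously non-empty and non-special, and one can let $\tfrac{d}{m} \to \mu^+$ through such pairs.

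Finally I would feed this into the Seshadri-bound Proposition above, whose hypothesis is precisely the existence, for every $\epsilon > 0$ and $M \gg 0$, of $d, m > M$ with $0 \leq \tfrac{d}{m} - \mu \leq \epsilon$ and $\mathcal{L}(d; m^{10})$ non-empty and non-special; its conclusion then gives $\epsilon(\mathbb{P}^2, \mathcal{O}_{\mathbb{P}^2}(1); p_1, \ldots, p_{10}) \geq \tfrac{1}{\mu} = \tfrac{3}{10}$, the underlying input being the approximation Theorem~\ref{Approx-thm}. The one real difficulty lies earlier, in the First Degeneration computation that validated Theorem~\ref{Ineq-thm}; inside the present argument the only delicate point is the bookkeeping that extracts the correct infimal slope $\tfrac{10}{3}$ and verifies the strict comparison $\tfrac{10}{3} > \sqrt{10}$ required for effectivity.
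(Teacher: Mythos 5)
Your proposal is correct and follows the paper's own route: the First Degeneration computations (vanishing on $P_1$ via the conic through the four points, and on $F_1$ via the three Cremona transformations leading to standard bundles under $\frac{10}{3}m < d < 4m$, $a=0$) feed into Theorem~\ref{Ineq-thm} with $\mu = \frac{10}{3}$, and the Seshadri-bound Proposition then gives $\frac{1}{\mu} = \frac{3}{10}$. The numerical checks you flag ($P^\prime$ unbounded, $\frac{10}{3} > \sqrt{10}$) are exactly the ones the paper relies on.
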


\begin{rem}
We could also standardize the line bundle on $P_1$. But doing so we would loose the symmetry of the blown up points on
$P_1$ thus creating further difficulties when detecting curves to throw later on.
\end{rem}

\begin{rem} \label{mod-rem}
Tensorizing $f^\ast \mathcal{O}_{\mathbb{P}^2}(d) \otimes \mathcal{O}(-m\sum_{i=1}^{10}E_i)$ with $\mathcal{O}((2m+a)F_1)$
is necessary for providing enough positivity on the line bundle restricted to $P_1$. The multiple $2m$ would be the minimal
possible, but the additional $a$ helps in later degenerations. We will also use this type of modification again, to ensure
enough positivity for the line bundle on certain components. 
\end{rem}

\begin{rem} \label{genpos-rem}
The $4$ points on $P_p$ and the $6$ points on $F_p$ can be freely chosen. Any considerations on general position later on 
must backtrack to this choice. When transforming exceptional configurations this is done by the arguments in 
section~\ref{Crem-sec} without much effort. When discussing the intersection points of curves to throw with other components
we invert the Cremona transformation on the component containing the curve to throw, and argue on $P_p$ and $F_p$. 
\end{rem}

\subsection{The Second Degeneration} 

Still assuming $d > \sqrt{10} m$ we discuss what happens when $d < \frac{10}{3} m$.

\subsubsection{Identification of curves to throw} 

We look for curves to throw among the exceptional divisors associated to multiplicities of line bundles on
components of the last degeneration. These multiplicities must be negative when $d < \frac{10}{3} m$. This is the case for the
first multiplicity $3d - 10m - 2a$ of $\mathcal{L}_{1|F_1}$. Hence we want to throw the exceptional divisor $E_1 \subset F_1$
associated to this multiplicity. $E_1$ is a section of $\mathcal{L}(0;-1,0^6)$. 

\subsubsection{Intersection of curve to throw with other components}

Since 
\[ E_1.C_1 = \mathcal{L}(0;-1,0^6).\mathcal{L}(3;2,1^6) = 2, \] 
we expect two intersection points with $P_1$, and want to perform a $2$-throw. Since for $7$
points in general position on $\mathbb{P}^2$ the only section of $\mathcal{L}(3;2,1^6)$ is the strict transform of a cubic curve with a node in the first point, we indeed get $2$ different intersection points on the exceptional divisor over the node. On 
$P_1$ these $2$ points together with the $4$ blown up points can be assumed to lie in general position. 

\subsubsection{Throwing the curve: Components and their intersections}

In the Throwing Construction~\ref{throw-const} we identify $C$ with $E_1$, $V_1$ with $F_1$ and $V_2$ with $P_2$, and 
perform a $2$-throw. Call
\[ \mathcal{X}_2 := \widetilde{X},\ F_2 := \widetilde{V}_1,\ P_2 := \widetilde{V}_2, T_1^{(2)} := \widetilde{T}_1, \
   T_2^{(2)} := \widetilde{T}_2. \]
Then $F_2 \cong F_1$, $P_2 \cong P_1([p_1, p_2], [p_3, p_4])$ where $p_1, p_2, p_3, p_4$ all lie on the intersection curve
with $F_2$, $T_1^{(2)} \cong \mathbb{P}^2(p)$ and $T_2^{(2)} \cong \mathbb{P}^1 \times \mathbb{P}^1$.

\noindent Next, we describe the configuration of intersection curves on each component:
\begin{itemize}
\item on $F_2$: \begin{tabular}[t]{l}
                          (a section of) $\mathcal{L}(3;2,1^6)$ with $P_2$, \\
                          $\mathcal{L}(0;-1,0^6)$ with $T_2^{(2)}$,
                          \end{tabular}
\item on $P_2$: \begin{tabular}[t]{l}
                          $\mathcal{L}(1;0^4,[1,1],[1,1])$ with $F_2$, \\
                          $\mathcal{L}(0;0^4,[-1,1],[0,0])$ and $\mathcal{L}(0;0^4,[0,0],[-1,1])$ with $T_1^{(2)}$, \\
                          $\mathcal{L}(0;0^4,[0,-1],[0,0])$ and $\mathcal{L}(0;0^4,[0,0],[0,-1])$ with $T_2^{(2)}$,
                          \end{tabular}
\item on $T_1^{(2)}$: \begin{tabular}[t]{l}
                                 $2$ sections of $\mathcal{L}(1;1)$ with $P_2$, \\
                                 (the section of) $\mathcal{L}(0;-1)$ with $T_2^{(2)}$,
                                 \end{tabular}
\item on $T_2^{(2)}$: \begin{tabular}[t]{l}
                                 a (horizontal) section of $\mathcal{O}_{\mathbb{P}^1 \times \mathbb{P}^1}(0,1)$ with $F_2$ and with 
                                 $T_2^{(2)}$, \\
                                 $2$ (vertical) sections of $\mathcal{O}(1,0)$ with $P_2$.
                                 \end{tabular} 
\end{itemize}

\subsubsection{Throwing the curve: The line bundle and its restrictions}

In the Throwing Construction~\ref{throw-const} identify $\mathcal{L}$ with $\mathcal{L}_1$. Since 
$\mathcal{L}_1.\mathcal{E}_1 = \mathcal{L}_{1|F_1}.\mathcal{E}_1 = 3d - 10m - 2a$, we set
\[ a_1 := \frac{3}{2}d - 5m - a,\ a_2 := 3d - 10m - 2a \]
and only consider $d, m \in 2 \cdot \mathbb{N}$. Call $\mathcal{L}_2 := \widetilde{\mathcal{L}}$. Then
\begin{eqnarray*}
\mathcal{L}_{2|F_2} & \cong & \mathcal{L}(4d - 12m -3a; 0, (d - 3m - a)^6), \\
\mathcal{L}_{2|P_2} & \cong & \mathcal{L}(2m+a; m^4, [-a_1, -a_2+a_1]^2) \\
 & = & \mathcal{L}(2m+a; m^4, [a + 5m - \frac{3}{2}d, a + 5m - \frac{3}{2}d]^2), \\
\mathcal{L}_{2|T_1^{(2)}} & \cong & \mathcal{L}(a_2-2a_1; a_2 - 2a_1 - (a_2 - 2a_1)) = \mathcal{L}(0;0), \\
\mathcal{L}_{2|T_2^{(2)}} & \cong & \mathcal{O}_{\mathbb{P}^1 \times \mathbb{P}^1}(0,a_1-a_2) = 
                                                         \mathcal{O}(0, a + 5m - \frac{3}{2}d).
\end{eqnarray*}
Note that for $\sqrt{10}m < d < \frac{10}{3}m$ there are no negative multiplicities.

\subsubsection{Applying the Gluing Lemma}

In the setting of Gluing Lemma~\ref{glue-lem} we identify $V_1$ with $T_1^{(2)}$, $V_2$ with $T_2^{(2)}$, $V_3$ with $F_2$ and
$V_4$ with $P_2$. Then we check when the relevant cohomology groups vanish. 
\begin{enumerate}[leftmargin=*]
\item $H^1(T_1^{(2)}, \mathcal{L}_{2|T_1^{(2)}}) = 0$: obvious. 
\item $H^1(T_2^{(2)}, \mathcal{L}_{2|T_1^{(2)}}) = 0$ and 
         $H^1(T_2^{(2)}, \mathcal{L}_{2|T_1^{(2)}} \otimes \mathcal{O}(0,-1)) = 0$, for the intersection with $T_1^{(2)}$: true because 
         $a + 5m - \frac{3}{2}d > 0$.
\item $H^1(F_2, \mathcal{L}_{2|F_2}) = 0$ and $H^1(F_2, \mathcal{L}_{2|F_2} \otimes \mathcal{L}(0;1,0^6)) = 0$, for the
intersection with $T_2^{(2)}$: $F_2$ is strongly anti-canonical because it is the blow up of $\mathbb{P}^2$ in less than $9$
points in general position, by Remark~\ref{ac9-rem}. Since
\begin{eqnarray*} 
K_{F_2}.(\mathcal{L}_{2|F_2} \otimes \mathcal{L}(0;1,0^6)) & = & (-3) \cdot (4d - 12m - 3a) + 1 + 6(d - 3m - a)) \\ 
                                                                                              & = & -6d + 18m + 3a + 1. 
\end{eqnarray*}
is negative if $a < 2d - 6m$ and $\mathcal{L}(4d - 12m -3a; 1, (d - 3m - a)^6)$ is standard if $0 \leq a < d - 3m$
we can apply Harbourne's Criterion~\ref{Har-crit} if $0 \leq a < d - 3m$.
\item $H^1(P_2, \mathcal{L}_{2|P_2}) = 0$: $P_2$ is only anti-canonical because every section of $-K_{P_2}$ decomposes 
into the line $L$, a section of $\mathcal{L}(1;0^4,[1,1]^2)$, as the fixed part, and a conic $C$ in $\mathcal{L}(2;1^4,[0,0]^2)$ as
the moving part. We want to apply Theorem~\ref{curvevan-crit} using the curves $L$ and
$C$, but first we perform several Cremona transformations on $\mathcal{L}_{2|P_2}$:
\[ \begin{array}{lccl}
   2m+a; & \underline{\underline{\underline{m}}}^4, & & [5m-\frac{3}{2}d+a, 5m-\frac{3}{2}d+a]^2 \\
   m+2a; & \underline{m}, & a^3, & [\underline{\underline{5m - \frac{3}{2}d + a}}, 5m - \frac{3}{2}d + a]^2 
   \end{array} \]
Since the line $\mathcal{L}(1; 0^4, [1,1]^2)$ is transformed to a conic $\mathcal{L}(2; 0, 1^3, [1,1]^2)$ the infinitely near points
are tangent to this conic and not directed to one of the three base points of the next Cremona transformation indicated by
the underscores. We are in the setting of Cremona transformation II: 
\[ \begin{array}{lccl}
   3d - 9m + 2a; & \underline{3d - 9m}, & a^3, & (\frac{3}{2}d - 5m + a)^2, (\underline{\underline{5m - \frac{3}{2}d + a}})^2 \\
   6d-19m+2a; & 6d-19m, & \underline{\underline{\underline{a}}}^3, & [a-(5m - \frac{3}{2}d), a - (5m - \frac{3}{2}d)]^2 \\
   \end{array} \]
In particular, the non-infinitely near points remain in general position. A final Cremona transformation yields the more
symmetric configuration
\[    \mathcal{L}(12d-38m+a; (6d-19m)^4, [a-(5m - \frac{3}{2}d), a - (5m - \frac{3}{2}d)]^2). \]
After all these Cremona transformations, $L$ and $C$ are again sections of $\mathcal{L}(1; 0^4, [1,1]^2)$ and 
$\mathcal{L}(2; 1^4, [0,0]^2)$. Now, $(K_{P_2}+C).C = K_{P_2}.C = -2 < 0$, and 
$(\mathcal{L}_{2|P_2} - iC).C = \mathcal{L}_{2|P_2}.C = 2a \geq 0$. Hence $H^1(P_2, \mathcal{L}_{2|P_2}) = 0$ follows from
$H^1(P_2, \mathcal{L}(a;0^4,[a-(5m - \frac{3}{2}d), a - (5m - \frac{3}{2}d)]^2)) = 0$.

\noindent Next, $(K_{P_2}+L).L = -2 < 0$ and if $i > 0$ and $a < \frac{20}{3}m - 2d$,
\begin{eqnarray*}
(\mathcal{L}(a;0^4,[a-(5m - \frac{3}{2}d), a - (5m - \frac{3}{2}d)]^2) - iL).L & = & a - 4(a-(5m-\frac{3}{2}d)) + 3i \\
                                                                                                                  & > & 20m - 6d - 3a > 0. 
\end{eqnarray*}
Hence the vanishing of $H^1(P_2, \mathcal{L}(a;0^4,[a-(5m - \frac{3}{2}d), a - (5m - \frac{3}{2}d)]^2))$
follows from $H^1(P_2, \mathcal{L}(5m - \frac{3}{2}d; 0^8)) = 0$.
\item $H^1(P_2, \mathcal{L}_{2|P_2} \otimes \mathcal{O}(-C_2)) = 0$ where 
$C_2 = P_2 \cap (F_2 \cup T_1^{(2)} \cup T_2^{(2)})$: The Cremona transformations above do not change the description of 
intersection curves on $P_2$ with $F_2$ and $T_1^{(2)}$ whereas the $2$ intersection curves with $T_2^{(2)}$ become sections of $\mathcal{L}(2;1^4,[1,0],[0,0])$ and $\mathcal{L}(2;1^4,[0,0],[1,0])$. These curves add up to a section of 
$\mathcal{L}(5;2^4,[1,2]^2)$. As above we conclude that the first cohomology group of the resulting line bundle 
\[    \mathcal{L}(12d-38m+a-5; (6d-19m-2)^4, [a-(5m - \frac{3}{2}d) - 1, a - (5m - \frac{3}{2}d) - 2]^2). \]
vanishes if $H^1(P_2, \mathcal{L}(5m - \frac{3}{2}d+1; 0^4, [1,0]^2)) = 0$. Projecting from $P_2$ onto $\mathbb{P}^2$ blown up
in $2$ points we obtain an excellent line bundle on a strongly anti-canonical surface, hence the vanishing.
\end{enumerate}

\subsubsection{Bounds}

We can apply the Gluing Lemma~\ref{glue-lem} if the following inequalities are satisfied:
\[ \sqrt{10} m < d < \frac{3}{10} m,\ 0 \leq a < d - 3m,\ d > \frac{6}{19} m,\ a > 5m - \frac{3}{2} d,\ a < \frac{20}{3} m - 2d. \]
These inequalities imply $5m - \frac{3}{2} d < d - 3m \Leftrightarrow d > \frac{16}{5} m$. Vice versa they are satisfied if
\[ \frac{16}{5} m < d < \frac{29}{9} m\ \mathrm{and}\ 5m - \frac{3}{2} d < a < d - 3m, \]
because $d - 3m < \frac{20}{3} m - 2d \Leftrightarrow d < \frac{29}{9} m$. Consequently we can apply Theorem~\ref{Ineq-thm}
with $\mu = \frac{16}{5}$: 
\begin{prop}
The multi-point Seshadri constant of $10$ points in general position is bounded from below by
\[ \epsilon(\mathbb{P}^2, \mathcal{O}_{\mathbb{P}^2}(1); p_1, \ldots, p_{10}) \geq \frac{5}{16}. \]
\end{prop}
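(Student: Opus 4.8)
The plan is to harvest the bound directly from the apparatus assembled in the Second Degeneration: all the cohomological vanishing --- namely $H^1(V_i, \mathcal{L}_{2|V_i}) = 0$ and $H^1(V_i, \mathcal{L}_{2|V_i} \otimes \mathcal{O}(-C_i)) = 0$ for each of the four components $V_i \in \{T_1^{(2)}, T_2^{(2)}, F_2, P_2\}$ of the central fibre --- has already been secured subject to the finite list of weak linear inequalities collected in the Bounds paragraph. Hence the proof reduces to three purely formal steps: (a) extract the optimal slope $\mu$ from that list, (b) feed it into Theorem~\ref{Ineq-thm}, and (c) convert the resulting non-speciality into a Seshadri bound by the Seshadri-constant proposition, which yields $\epsilon \geq \tfrac{1}{\mu}$.

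First I would compute $\mu$. Every inequality in the list is homogeneous of degree one in $(d,m,a)$, so the admissible set $P \subset \mathbb{R}^3$ is a convex cone; in particular its projection $P'$ to the $d$--$m$ plane is unbounded in both $d$ and $m$ as soon as it is nonempty, which is precisely the unboundedness hypothesis demanded by Theorem~\ref{Ineq-thm}. The optimal slope is governed by the existence of an admissible $a$: the constraints read $5m - \tfrac{3}{2}d < a < \min\{\,d - 3m,\ \tfrac{20}{3}m - 2d\,\}$, with $a \geq 0$ automatic in range. Near the smallest slopes the binding upper bound is $d - 3m$, so such an $a$ exists exactly when $5m - \tfrac{3}{2}d < d - 3m$, i.e. $d > \tfrac{16}{5}m$; the remaining lower bounds $\sqrt{10}\,m < d$ and $d > \tfrac{6}{19}m$ are strictly weaker. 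Therefore
\[ \mu = \inf\Big\{ \tfrac{d}{m} : (d,m) \in P' \Big\} = \tfrac{16}{5}. \]

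With $\mu = \tfrac{16}{5}$ I would invoke Theorem~\ref{Ineq-thm} with $k = 2$ --- the throw forced the restriction $d, m \in 2\mathbb{N}$ when we set $a_1 = \tfrac{3}{2}d - 5m - a$ --- obtaining $\epsilon > 0$ and $M > 0$ so that $\mathcal{L}(d; m^{10})$ is non-special on the blow-up of $\mathbb{P}^2$ in $10$ general points whenever $d, m \in 2\mathbb{N}$, $d, m > M$ and $0 \leq \tfrac{d}{m} - \mu \leq \epsilon$; and since $\mu = \tfrac{16}{5} > \sqrt{10}$, the same theorem makes these bundles effective. As $\mu$ is rational, explicit admissible pairs are easy to produce --- for instance $m = 10N$, $d = 32N + 2$ give ratios $\tfrac{16}{5} + \tfrac{1}{5N}$ decreasing to $\mu$ with $d, m \in 2\mathbb{N}$, and for $N$ large they satisfy all hypotheses --- so the hypotheses of the Seshadri-constant proposition hold for every prescribed tolerance, and it delivers $\epsilon(\mathbb{P}^2, \mathcal{O}_{\mathbb{P}^2}(1); p_1, \ldots, p_{10}) \geq \tfrac{1}{\mu} = \tfrac{5}{16}$.

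The one genuinely delicate point --- already dispatched in the Bounds paragraph --- is the mutual consistency of the two upper bounds on $a$ that originate on different components: $a < d - 3m$ from the standardness of $\mathcal{L}_{2|F_2}$, and $a < \tfrac{20}{3}m - 2d$ from the final $L$-induction step on $P_2$. These cross at $d - 3m = \tfrac{20}{3}m - 2d$, i.e. $d = \tfrac{29}{9}m$, so for slopes just above $\mu = \tfrac{16}{5}$ (which lies below $\tfrac{29}{9}$) the window $\tfrac{16}{5} < \tfrac{d}{m} < \tfrac{29}{9}$ is a nonempty open range of admissible slopes accumulating at $\mu$. This is exactly what guarantees that the infimum defining $\mu$ is genuinely approached and that the cone $P$ is nonempty near its extreme face, so that both conditions of the Gluing Lemma~\ref{glue-lem} can be met arbitrarily close to the optimal slope.
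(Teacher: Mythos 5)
Your proposal is correct and follows essentially the same route as the paper: the paper's own proof is exactly the reduction in the Bounds paragraph, namely observing that the inequalities from the Gluing Lemma admit solutions precisely when $\frac{16}{5}m < d < \frac{29}{9}m$ with $5m-\frac{3}{2}d < a < d-3m$, so that $\mu = \frac{16}{5}$, and then citing Theorem~\ref{Ineq-thm} and the Seshadri-constant proposition. (The inequality you quote as $d > \frac{6}{19}m$ is a typo inherited from the paper for $d > \frac{19}{6}m$, but since $\frac{19}{6} < \frac{16}{5}$ it is indeed non-binding, as you assert.)
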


\subsection{The Third Degeneration} 

Still assuming $d > \sqrt{10} m$ we discuss what happens when $d < \frac{16}{5} m$. 

\subsubsection{Identification of curves to throw} 

This is more subtle than in the Second Degeneration: $d < \frac{16}{5} m$ implies that $5m - \frac{3}{2} d > d - 3m$, and we
cannot choose $a$ such that $5m - \frac{3}{2} d < a < d - 3m$. In the following we assume 
\[ a < d - 3m < 5m - \frac{3}{2} d. \]
Then multiplicities in the Cremona-transformed line bundle $\mathcal{L}_{2|P_2}$ become negative. Before identifying the
curves to throw we modify $\mathcal{L}_2$, for the reasons discussed in Remark~\ref{mod-rem}:
\[ \mathcal{L}^\prime_2 := \mathcal{L}_2 \otimes \mathcal{O}_{\mathcal{X}_2}((a - (5m - \frac{3}{2} d)T_1^{(2)}). \]
In the proof of Construction~\ref{throw-const} we showed
\[ \mathcal{O}_{T_1^{(2)}}(T_1^{(2)}) \cong \mathcal{O}_{\mathbb{F}_1}(-E_1 - 2F_1 - E_1) \cong \mathcal{L}(-2;0). \]
Furthermore, $\mathcal{O}_{T_2^{(2)}}(T_1^{(2)}) \cong \mathcal{O}_{\mathbb{P}^1 \times \mathbb{P}^1}(0,1)$,
$\mathcal{O}_{P_2}(T_1^{(2)}) \cong \mathcal{L}(0;0^4,[-1,1]^2)$ and $\mathcal{O}_{F_2}(T_1^{(2)}) \cong \mathcal{O}_{F_2}$.
Consequently,

\noindent
\begin{tabular}[t]{l}
$\mathcal{L}^\prime_{2|F_2} = \mathcal{L}_{2|F_2}$, 
$\mathcal{L}^\prime_{2|T_1^{(2)}} \cong \mathcal{L}(10m-3d-2a;0)$, 
$\mathcal{L}^\prime_{2|T_2^{(2)}} \cong \mathcal{O}_{\mathbb{P}^1 \times \mathbb{P}^1}(0,2a)$ and\\
$\mathcal{L}^\prime_{2|P_2} \cong \mathcal{L}(12d-38m+a; (6d-19m)^4, [0, 2a - 10m + 3d)]^2)$.
\end{tabular} 

\noindent We throw the two $(-1)$-curves $E_{2,1}$ of $\mathcal{L}(0;0^4,[0,-1],[0,0])$ and 
$E_{2,2}$ of $\mathcal{L}(0;0^4,[0,0],[0,-1])$ simultaneously. This is possible because they do not intersect on $P_2$:
\[ \mathcal{L}(0;0^4,[0,-1],[0,0]).\mathcal{L}(0;0^4,[0,0],[0,-1]) = 0. \]

\subsubsection{Intersection of curves to throw with other components}

The intersection of $E_{2,i}$with the other components can be computed on $P_2$, using the intersection curves of the other
components with $P_2$:
\begin{itemize}[leftmargin=*]
\item With $F_2$, there exists for both curves exactly
\begin{eqnarray*} 
\lefteqn{\mathcal{L}(1;0^4,[1,1],[1,1]).\mathcal{L}(0;0^4,[0,-1],[0,0]) =} \\
 & =  & \mathcal{L}(1;0^4,[1,1],[1,1]).\mathcal{L}(0;0^4,[0,0],[0,-1]) = 1 
\end{eqnarray*} 
intersection point. On $P_2$ these two points $p_1, p_2$ lie on the intersection curve $C_1 = P_2 \cap F_2$,
a section of $\mathcal{L}(1;0^4,[1,1]^2)$. Backtracking through the Cremona transformations on $P_2$ it is still a section of 
$\mathcal{L}(1;0^4,[1,1]^2)$, hence the (strict transform of the) line through the $2$ intersection points with the curve on 
$F_2$ thrown in the Second Degeneration. On the other hand, $E_{2,1}$ and $E_{2,2}$ become sections of
$\mathcal{L}(2;1^4,[1,0],[0,0])$ and $\mathcal{L}(2;1^4,[0,0],[1,0])$. Hence the second intersection point of these conics with the
line varies freely on the line when varying the $4$ points on $P_2$. Consequently, the points $p_1, p_2$
are in general position on $C_1$, in particular with respect to the 7 points blown up on $F_2$ determining $C_1$ as a section 
of $\mathcal{L}(3;2,1^6)$. (See also Remark~\ref{genpos-rem}.)  
\item We easily calculate $E_{2,1}.T_2^{(2)} = E_{2,2}.T_2^{(2)} = 0$.
\item $P_2$ intersects $T_1^{(2)}$ in a section $C_1$ of $\mathcal{L}(0;0^4,[-1,1],[0,0])$ and a section $C_2$ of 
$\mathcal{L}(0;0^4,[0,0],[-1,1])$. We easily calculate $E_{2,i}.T_1^{(2)} = 1$, and $E_{2,i}$ only intersects $C_i$. Since the 
intersection points do not lie on $T_2^{(2)}$, they are not collinear with the point blown up on $T_1^{(2)}$, hence in general
position. 
\end{itemize}

\subsubsection{Throwing the curve: Components and their intersections} 

In the Throwing Construction~\ref{throw-const} we identify $E_{2,1}$ resp. $E_{2,2}$ with $E_1$, $P_2$ with $V_1$, $F_2$
with $V_2$, $T_1^{(2)}$ with $V_3$ and $T_2^{(2)}$ with $V_4$, and simultaneously perform two $2$-throws. Call
\[ \mathcal{X}_3 := \widetilde{\mathcal{X}},\ P_3 := \widetilde{V}_1,\ F_3 := \widetilde{V}_2,\ T_1^{(2,3)} := \widetilde{V}_3,\ 
   T_2^{(2,3)} := \widetilde{V}_4, \]
\[ T_{1,1}^{(3)} := \widetilde{T}_{1,1},\ T_{1,2}^{(3)} := \widetilde{T}_{1,2},\ 
   T_{2,1}^{(3)} := \widetilde{T}_{2,1},\ T_{2,2}^{(3)} := \widetilde{T}_{2,2}. \]
Then $P_3 \cong P_2$, $F_3 \cong F_2([p_1,p_2],[p_3,p_4])$, 
\[ T_1^{(2,3)} \cong T_1^{(2)}([p_1,p_2],[p_3,p_4]) \cong \mathbb{P}^2(p, [p_1,p_2],[p_3,p_4]), \] 
where $p, p_1, p_3$ are not collinear and the infinitely near points $p_2, p_4$ are directed to $p$, 
$T_2^{(2,3)} \cong T_2^{(2)}$, $T_{1,1}^{(3)} \cong T_{1,2}^{(3)} \cong \mathbb{F}_1 \cong \mathbb{P}^2(p)$, 
$T_{2,1}^{(3)} \cong T_{2,2}^{(3)} \cong \mathbb{P}^1 \times \mathbb{P}^1$.

\noindent Next, we describe the configuration of intersection curves on each component:
\begin{itemize}[leftmargin=*]
\item On $P_3$:
\begin{tabular}[t]{l}
(the only section of) $\mathcal{L}(1;0^4,[1,1],[1,1])$ with $F_3$, \\
$\mathcal{L}(0;0^4,[-1,1],[0,0])$ and $\mathcal{L}(0;0^4,[0,0],[-1,1])$ with $T_1^{(2,3)}$, \\
$\mathcal{L}(2;1^4,[1,0],[0,0])$ and $\mathcal{L}(2;1^4,[0,0],[1,0])$ with $T_2^{(2,3)}$, \\
no intersections with $T_{1,1}^{(3)}$ and $T_{1,2}^{(3)}$, \\
$\mathcal{L}(0;0^4,[0,-1],[0,0])$ resp. $\mathcal{L}(0;0^4,[0,0],[0,-1])$ with $T_{2,1}^{(3)}$ and $T_{2,2}^{(3)}$. 
\end{tabular}
\item on $F_3$: 
\begin{tabular}[t]{l}
$\mathcal{L}(3;2,1^6,[1,1],[1,1])$ with $P_3$, \\
no intersection with $T_1^{(2,3)}$, $\mathcal{L}(0;-1,0^6,[0,0],[0,0])$ with $T_2^{(2,3)}$, \\
$\mathcal{L}(0;0,0^6,[-1,1],[0,0])$ with $T_{1,1}^{(3)}$ and $\mathcal{L}(0;0,0^6,[0,0],[-1,1])$ with $T_{1,2}^{(3)}$, \\
$\mathcal{L}(0;0,0^6,[0,-1],[0,0])$ with $T_{2,1}^{(3)}$ and $\mathcal{L}(0;0,0^6,[0,0],[0,-1])$ with $T_{2,2}^{(3)}$. 
\end{tabular}
\item on $T_1^{(2,3)}$: 
\begin{tabular}[t]{l}
$\mathcal{L}(1;1,[1,1],[0,0])$ and $\mathcal{L}(1;1,[0,0],[1,1])$ with $P_3$, \\
no intersection with $F_3$, $\mathcal{L}(0;-1,[0,0],[0,0])$ with $T_2^{(2,3)}$, \\
$\mathcal{L}(0;0,[-1,1],[0,0])$ with $T_{1,1}^{(3)}$ and $\mathcal{L}(0;0,[0,0],[-1,1])$ with $T_{1,2}^{(3)}$, \\
$\mathcal{L}(0;0,[0,-1],[0,0])$ with $T_{2,1}^{(3)}$ and $\mathcal{L}(0;0,[0,0],[0,-1])$ with $T_{2,2}^{(3)}$. 
\end{tabular}
\item on $T_2^{(2,3)}$: 
\begin{tabular}[t]{l}
$\mathcal{O}(0,1)$ with $F_3$ and $T_1^{(2,3)}$, two sections of $\mathcal{O}(1,0)$ with $P_3$, \\
no intersection with $T_{1,i}^{(3)}$ and $T_{2,i}^{(3)}$, $i=1,2$.
\end{tabular}
\item on $T_{1,i}^{(3)}$: 
\begin{tabular}[t]{l}
$\mathcal{L}(1;1)$ with $F_3$ and $T_1^{(2,3)}$, $\mathcal{L}(0;-1)$ with $T_{2,i}^{(3)}$, \\
no intersection with $P_3$, $T_2^{(2,3)}$ and $T_{j,i}^{(3)}$, $j=1,2$.
\end{tabular}
\item on $T_{2,i}^{(3)}$: 
\begin{tabular}[t]{l}
$\mathcal{O}(0,1)$ with $F_3$ and $T_1^{(2,3)}$, $\mathcal{O}(1,0)$ with $P_3$ and $T_{1,i}^{(3)}$, \\
no intersection with $T_2^{(2,3)}$ and $T_{j,i}^{(3)}$, $j=1,2$.
\end{tabular}
\end{itemize}

\subsubsection{Throwing the curve: The line bundle and its restrictions}

In the Throwing Construction~\ref{throw-const} identify $\mathcal{L}$ with $\mathcal{L}_2^\prime$. Since 
$\mathcal{L}_2^\prime.\mathcal{E}_{2,i} = \mathcal{L}_{2|P_2}^\prime.\mathcal{E}_{2,i} = 3d - 10m + 2a$, we set
\[ a_1 := \frac{3}{2}d - 5m + a,\ a_2 := 3d - 10m + 2a \]
and only consider $d, m \in 2 \cdot \mathbb{N}$. Call $\mathcal{L}_3 := \widetilde{\mathcal{L}}$. Then
\begin{eqnarray*}
\mathcal{L}_{3|P_3} & \cong & \mathcal{L}(12d - 38m + a; (6d - 19m)^4, [0,0]^2), \\
\mathcal{L}_{3|F_3} & \cong & \mathcal{L}(4d-12m-3a; 0, (d-3m-a)^6, [5m - \frac{3}{2}d -a, 5m - \frac{3}{2}d - a]^2), \\
\mathcal{L}_{3|T_1^{(2,3)}} & \cong & \mathcal{L}(10m - 3d -2a; 0, [5m - \frac{3}{2}d - a, 5m - \frac{3}{2}d - a]^2), \\
\mathcal{L}_{3|T_2^{(2,3)}} & \cong & \mathcal{L}_{2|T_2^{(2)}}^\prime \cong \mathcal{O}(0, 2a), \\
\mathcal{L}_{3|T_{1,i}^{(3)}} & \cong & \mathcal{L}(0;0), \\
\mathcal{L}_{3|T_{2,i}^{(3)}} & \cong & \mathcal{O}(0,5m - \frac{3}{2}d - a).
\end{eqnarray*}

\subsubsection{Applying the Gluing Lemma} \label{glue3-ssec}

In the setting of Gluing Lemma~\ref{glue-lem} we identify $V_1$ with $T_{1,1}^{(3)} \cup T_{1,2}^{(3)}$, $V_2$ with $T_1^{(2,3)}$,
$V_3$ with $T_{2,1}^{(3)} \cup T_{2,2}^{(3)}$, $V_4$ with $T_2^{(2,3)}$,  $V_5$ with $F_3$ and
$V_6$ with $P_3$. Then we check when the relevant cohomology groups vanish. 
\begin{enumerate}[leftmargin=*]
\item $H^1(T_{1,i}^{(3)}, \mathcal{L}_{3|T_{1,i}^{(3)}}) = H^1(\mathbb{P}^2(p), \mathcal{L}(0;0)) = 0$: obvious. For the surjectivity
on $V_2 \cap W_1$ Prop.~\ref{negexc-prop} implies 
$H^1(\mathbb{P}^2(p), \mathcal{L}(-1;-1)) = H^1(\mathbb{P}^2, \mathcal{O}_{\mathbb{P}^2}(-1)) = 0$.

\item $H^1(T_1^{(2,3)}, \mathcal{L}_{3|T_1^{(2,3)}}) = 0$: First, $T_1^{(2,3)}$ is strongly anti-canonical since we can find a smooth cubic curve passing through a configuration of $5$ points as blown up on $T_1^{(2,3)}$. Using the Cremona
transformation I in section~\ref{Crem-sec},
\[ \mathcal{L}_{3|T_1^{(2,3)}} \cong  
   \mathcal{L}(10m - 3d -2a; 0, [\underline{\underline{5m - \frac{3}{2}d - a}}, \underline{5m - \frac{3}{2}d - a}]^2) \]
can be standardized to
\[ \mathcal{L}(5m - 3d - a; [0,0]^2, 5m - \frac{3}{2}d - a). \]
Then we can apply Harbourne's Criterion~\ref{Har-crit}.

\noindent After the Cremona transformation the intersection curves are sections of the following line bundles:

\begin{tabular}[t]{l}
$\mathcal{L}(0;[0,0],[-1,1],0)$ and $\mathcal{L}(1;[0,0],[1,1],1)$ with $P_3$, \\
no intersection with $F_3$, $\mathcal{L}(0;[0,0],[0,-1],0)$ with $T_2^{(2,3)}$, \\
$\mathcal{L}(0;[-1,1],[0,0],0)$ with $T_{1,1}^{(3)}$ and $\mathcal{L}(1;[1,1],[0,0],1)$ with $T_{1,2}^{(3)}$, \\
$\mathcal{L}(1;[1,0],[1,0],0)$ with $T_{2,1}^{(3)}$ and $\mathcal{L}(0;[0,0],[0,0],-1)$ with $T_{2,2}^{(3)}$. 
\end{tabular}
\item $H^1(T_{2,i}^{(3)}, \mathcal{L}_{3|T_{2,i}^{(3)}}) = 0$ and 
$H^1(T_{2,i}^{(3)}, \mathcal{L}_{3|T_{2,i}^{(3)}} \otimes \mathcal{O}_{T_{2,i}^{(3)}}(-T_{1,i}^{(3)} - T_1^{(2,3)})) = 0$, for the
intersection with $W_2$: true if $5m - \frac{3}{2}d > a$, because 
\[ \mathcal{O}_{T_{2,i}^{(3)}}(-T_{1,i}^{(3)} - T_1^{(2,3)}) \cong \mathcal{O}_{\mathbb{P}^1 \times \mathbb{P}^1}(-1,-1). \]
\item $H^1(T_2^{(2,3)}, \mathcal{L}_{3|T_2^{(2,3)}}) = 0$ and
$H^1(T_2^{(2,3)}, \mathcal{L}_{3|T_2^{(2,3)}} \otimes \mathcal{O}_{T_2^{(2,3)}}(- T_1^{(2,3)})) = 0$, for the intersection with 
$W_3$: true if $a>0$ because 
$\mathcal{O}_{T_2^{(2,3)}}(- T_1^{(2,3)}) \cong \mathcal{O}_{\mathbb{P}^1 \times \mathbb{P}^1}(0,-1)$.
\item $H^1(F_3, \mathcal{L}_{3|F_3}) = 0$ and $H^1(F_3, \mathcal{L}_{3|F_3} \otimes \mathcal{O}_{F_3}(-W_4) = 0$, for the 
intersection with $W_4$: First, 
\[ \mathcal{L}_{3|F_3} \cong \mathcal{L}(4d-12m-3a; 0, (d-3m-a)^6, [\underline{\underline{5m - \frac{3}{2}d -a}}, 
                                                                                                           \underline{5m - \frac{3}{2}d - a}]^2)               \]
is not standard if $d < \frac{54}{17} m$ because 
\[ 4d-12m-3a < 3 \cdot (5m - \frac{3}{2}d - a)\ \Leftrightarrow d < \frac{54}{17} m. \]
Note that $\frac{19}{6} < \frac{54}{17}$. 

\noindent Next, the infinitely near points are not directed to any of the other points. Hence we can perform Cremona
Transformation I without specifying the third point, and obtain
\[ \mathcal{L}(\frac{25}{2}d-39m-3a; 0,(d-3m-a)^6, [7d-22m-a,7d-22m-a],7d-22m-a, 5m-\frac{3}{2}d-a) \]
Note that now the infinitely near point is directed to the last point blown up.
 
\noindent Under our assumption on $a$, this line bundle is standard if $\frac{19}{6} m < d < \frac{54}{17} m$ because then
$d-3m-a < 7d-22m-a < 5m-\frac{3}{2}d-a$, and
\[ \frac{25}{2}d-39m-3a = 2 \cdot (7d-22m-a) + (5m-\frac{3}{2}d-a). \]
After the Cremona transformation the intersection curves of $F_3$ with the other components are sections of the following line
bundles:

\begin{tabular}[t]{l}
$\mathcal{L}(3;2,1^6,[1,1],1,1)$ with $P_3$, no intersection with $T_1^{(2,3)}$, \\
$\mathcal{L}(0;-1,0^6,[0,0],0,0)$ with $T_2^{(2,3)}$, \\
$\mathcal{L}(0;0,0^6,[-1,1],0,0)$ with $T_{1,1}^{(3)}$ and $\mathcal{L}(1;0,0^6,[1,1],0,1])$ with $T_{1,2}^{(3)}$, \\
$\mathcal{L}(1;0,0^6,[1,0],1,0)$ with $T_{2,1}^{(3)}$ and $\mathcal{L}(0;0,0^6,[0,0],0,-1)$ with $T_{2,2}^{(3)}$. 
\end{tabular}

\noindent When showing $H^1(F_3, \mathcal{L}_{3|F_3}) = 0$ we can forget the point with multiplicity $0$ and study the line 
bundle 
 \[ \widetilde{\mathcal{L}} := \mathcal{L}(\frac{25}{2}d-39m-3a; (d-3m-a)^6, [7d-22m-a,7d-22m-a],7d-22m-a, 5m-\frac{3}{2}d-a) \]
on 
$\widetilde{F} = \mathbb{P}^2(p_1, \ldots, p_6, [p_7,p_8],p_9,p_{10})$. $\widetilde{F}$ is strongly anti-canonical because the
image of the cubic in $\mathcal{L}(3;2,1^6,[1,1],1,1)$ on $F_3$ is a section of $-K_{\widetilde{F}}$. Furthermore,
\begin{eqnarray*}
\widetilde{\mathcal{L}}.K_{\widetilde{F}} & = & -3(\frac{25}{2}d-39m-3a) + 6(d-3m-a) + 3(7d-22m-a) + (5m-\frac{3}{2}d-a) \\
  & = & -12d + 38m -a < 0 
\end{eqnarray*}
if $d > \frac{19}{6} m$. Consequently we can apply Harbourne's Criterion~\ref{Har-crit}.

\noindent Finally all the intersection curves of $F_3$ with components of $W_4$ add up to $\mathcal{L}(2;-1,0^6,[1,2],1,0)$.
Since $\mathcal{L}^\prime := \mathcal{L}_{3|F_3} \otimes \mathcal{L}(-2;1,0^6,[-1,-2],-1,0)$ has no vanishing multiplicity we 
cannot
work directly on $\widetilde{F}$. But we can apply Theorem~\ref{curvevan-crit} on $\mathcal{L}^\prime$ and the strict transform
$C$ of the cubic in $\mathcal{L}(3;2,1^6,[1,1],1,1)$, because $(K_{F_3}+C).C=-2$ and
\begin{eqnarray*} 
\mathcal{L}^\prime.C & = & 12d - 38m + a - 4 > 0 
\end{eqnarray*}
if $d > \frac{19}{6} m$ (and $a > 2$). Consequently we only have to show  
\[ H^1(F_3, \mathcal{L}_{3|F_3} \otimes \mathcal{L}(-5;-1,(-1)^6,[-2,-3],-2,-1)) = 0, \]
and after applying Prop.~\ref{negexc-prop} this follows as above, working on $\widetilde{F}$.

\item $H^1(P_3, \mathcal{L}_{3|P_3}) = 0$ and $H^1(P_3, \mathcal{L}_{3|P_3} \otimes \mathcal{O}_{P_3}(-W_5) = 0$, for the 
intersection with $W_5$: We can forget the points with multiplicity $0$ and work on 
\[ \widetilde{P} \cong \mathbb{P}^2(p_1, \ldots, p_4),\ \widetilde{\mathcal{L}} := \mathcal{L}(12d-38m+a; (6d-19m)^4). \]
$\widetilde{P}$ is strongly anti-canonical. Since the points $p_1, \ldots, p_4$ are not collinear we can perform a Cremona 
transformation on $3$ of them and obtain
\[ \widetilde{\mathcal{L}} \cong \mathcal{L}(6d-19m+2a; 6d-19m, a^3). \]
This is a standard line bundle, and $\widetilde{\mathcal{L}}.K_{\widetilde{P}} = -12d+38m-3a < 0$ if 
$d > \frac{19}{6} m$ and $a < 6d-19m$. Hence we can apply Harbourne's Criterion~\ref{Har-crit}.

\noindent Finally, the sum of all intersection curves of $P_3$ with components of $W_5$ is a section of 
$\mathcal{L}(3;1^4,[1,1]^2)$. By Prop.~\ref{negexc-prop}, 
\[ H^1(P_3, \mathcal{L}_{3|P_3} \otimes \mathcal{L}(-3;(-1)^4,[-1,-1]^2)) = 
   H^1(\widetilde{P}, \widetilde{\mathcal{L}} \otimes \mathcal{L}(-3;(-1)^4)). \]
We can standardize as above and apply Harbourne's Criterion~\ref{Har-crit}.

\noindent We do not use the above Cremona transformation in later degenerations.
\end{enumerate}

\subsubsection{Bounds}

We can apply the Gluing Lemma~\ref{glue-lem} if the following inequalities are satisfied:
\[ a < 6d - 19m,\ \frac{19}{6} m < d < \frac{54}{17} m. \]
Consequently we can apply Theorem~\ref{Ineq-thm}
with $\mu = \frac{19}{6}$: 
\begin{prop}
The multi-point Seshadri constant of $10$ points in general position is bounded from below by
\[ \epsilon(\mathbb{P}^2, \mathcal{O}_{\mathbb{P}^2}(1); p_1, \ldots, p_{10}) \geq \frac{6}{19}. \]
\end{prop}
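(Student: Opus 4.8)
The plan is to feed the inequalities just derived straight into Theorem~\ref{Ineq-thm} and then into the last Proposition of Section~\ref{CM-method-sec}; all the geometric work has in fact already been carried out in the preceding subsections. First I would record the structure set up there: the Third Degeneration $\mathcal{X}_3 \to \Delta$ has general fibre $\mathbb{P}^2(p_1,\ldots,p_{10})$ with the points in general position, its central fibre decomposes into the components $P_3, F_3, T_1^{(2,3)}, T_2^{(2,3)}, T_{1,i}^{(3)}, T_{2,i}^{(3)}$, and the restrictions $\mathcal{L}_{3|V_i}$ together with the intersection curves $C_i$ all have degree and multiplicities depending linearly on $d,m,a$. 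The computations in subsection~\ref{glue3-ssec} show that both $H^1(V_i,\mathcal{L}_{3|V_i})=0$ and $H^1(V_i,\mathcal{L}_{3|V_i}\otimes\mathcal{O}(-C_i))=0$ hold whenever the parameters satisfy the finite system of weak linear inequalities $a < 6d-19m$ and $\frac{19}{6}m < d < \frac{54}{17}m$ (under the standing hypothesis $d>\sqrt{10}\,m$). Hence the hypotheses of Theorem~\ref{Ineq-thm} are met, with $k=2$ forced by the half-integer coefficients appearing in the throws.

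Next I would examine the polyhedron $P\subset\mathbb{R}^3$ cut out by these inequalities and its projection $P'\subset\mathbb{R}^2$ onto the $(d,m)$-plane. For every slope $d/m$ in the half-open interval $[\frac{19}{6},\frac{54}{17})$ the quantity $6d-19m$ is positive, so there is a non-empty range of admissible $a$ and thus $(d,m)\in P'$; scaling then shows $P'$ is unbounded in both $d$ and $m$. The lower boundary is the line $d=\frac{19}{6}m$, along which $6d-19m\to 0^+$, so the constraint on $a$ stays compatible right down to it and
\[ \mu := \inf\Big\{\tfrac{d}{m} : (d,m)\in P'\Big\} = \frac{19}{6}. \]
Theorem~\ref{Ineq-thm} then produces $\epsilon>0$ and $M>0$ such that $\mathcal{L}(d;m^{10})$ is non-special for all even $d,m>M$ with $0\le\frac{d}{m}-\frac{19}{6}\le\epsilon$. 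Since $\mu=\frac{19}{6}>\sqrt{10}$, the same theorem furthermore gives that these systems are effective for $M\gg 0$, hence non-empty.

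Finally, these non-empty non-special linear systems with $\frac{d}{m}\to\mu=\frac{19}{6}$ are precisely the input required by the last Proposition of Section~\ref{CM-method-sec} (which applies Theorem~\ref{Approx-thm}), and it yields
\[ \epsilon(\mathbb{P}^2,\mathcal{O}_{\mathbb{P}^2}(1);p_1,\ldots,p_{10}) \ge \frac{1}{\mu} = \frac{6}{19}. \]
I expect no genuine obstacle here: the substantive content --- building $\mathcal{X}_3$ from the two simultaneous $2$-throws, computing the restrictions of $\mathcal{L}_3$, and verifying the $H^1$-vanishings via Harbourne's Criterion~\ref{Har-crit} and Theorem~\ref{curvevan-crit} --- is already in place. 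The only point deserving a moment's attention is to confirm that the feasible interval $[\frac{19}{6},\frac{54}{17})$ is non-empty and that its left endpoint, rather than the constraint coming from $a$, pins down $\mu$; since $\sqrt{10}\approx 3.162 < \frac{19}{6}\approx 3.167 < \frac{54}{17}\approx 3.176$ both facts are immediate, and the strict inequality $\frac{19}{6}>\sqrt{10}$ also secures effectiveness.
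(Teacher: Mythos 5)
Your proposal is correct and follows essentially the same route as the paper: collect the linear inequalities $a < 6d-19m$ and $\frac{19}{6}m < d < \frac{54}{17}m$ from the $H^1$-vanishing checks of the Third Degeneration, apply Theorem~\ref{Ineq-thm} with $\mu=\frac{19}{6}$, and conclude via the Seshadri-constant proposition built on Theorem~\ref{Approx-thm}. The only cosmetic slip is calling the feasible slope interval half-open at $\frac{19}{6}$ (there $6d-19m=0$, so no admissible $a$ exists; the infimum $\mu=\frac{19}{6}$ is approached but not attained), which does not affect the argument.
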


\subsection{The Fourth Degeneration} 
Still assuming $d > \sqrt{10} m$ we discuss what happens when $d < \frac{19}{6} m$. 

\subsubsection{Identification of curves to throw} 

If $d < \frac{19}{6} m$ the $(-1)$-curves $E_{3,1}$, $E_{3,2}$, $E_{3,3}$, $E_{3,4}$ in
\[ \mathcal{L}(0;-1,0^3,[0,0]^2),\ \mathcal{L}(0;0,-1,0^2,[0,0]^2),\ \mathcal{L}(0;0^2,-1,0,[0,0]^2),\ \mathcal{L}(0;0^3,-1,[0,0]^2) \] intersect
$\mathcal{L}_{3|P_3} \cong \mathcal{L}(12d-38m+a; (6d-19m)^4)$ negatively, and they do not intersect each other. 

\noindent We want to throw simultaneously the $4$ curves $E_{3,1}, E_{3,2}, E_{3,3}, E_{3,4}$.

\subsubsection{Intersection of curves to throw with other components}

There is no intersection of $E_{3,1}$, $E_{3,2}$, $E_{3,3}$, $E_{3,4}$ with $F_3$, $T_1^{(2,3)}$, $T_{1,i}^{(3)}$, $T_{2,i}^{(3)}$.

\noindent The intersection curve of $P_3$ with $T_2^{(2,3)}$ consists of a conic $C_1$ in $\mathcal{L}(2;1^4,[1,0],[0,0])$ and a conic $C_2$ in $\mathcal{L}(2;1^4,[0,0],[1,0])$. Both sections
intersect each of the $E_{3,j}$ exactly once. Call the intersection points $p_j$ and $p_{4+j}$, $j=1, \ldots, 4$. 

\noindent None of the intersection points on $C_2$ lies on the same horizontal fiber of 
$T_2^{(2,3)} \cong \mathbb{P}^1 \times \mathbb{P}^1$ as one of the intersection points with $C_1$: Reversing the
Cremona transformations applied on $P_3 \cong P_2$ in the second degeneration, it turns out that the $E_{3,i}$ can also be
interpreted as quartics in 
\[ \mathcal{L}(4;1, 2^3,[1,1]^2),\ \mathcal{L}(4;2,1,2^2,[1,1]^2),\ \mathcal{L}(4;2^2,1,2,[1,1]^2),\ 
   \mathcal{L}(4;2^3,1,[1,1]^2). \]
The intersection curves $C_1$ and $C_2$ turn into sections of $\mathcal{L}(0;0^4,[0,-1],[0,0])$ and 
$\mathcal{L}(0;0^4,[0,0],[0,-1])$. These $(-1)$-curves are identified by the horizontal projection of $T_2^{(2,3)}$ on 
$\mathbb{P}^1$, and the identification is not
affected by different choices of the $4$ points blown up on $P_2$. On the other hand, moving the $4$ points with a pulled back
$\mathbb{C}^\ast$-action fixing all points on $C_1$ and only $2$ points on $C_2$ varies the quartic $E_{3,i}$ in such a way that the intersection points with $C_1$ are fixed, and those with $C_2$ vary.

\subsubsection{Throwing the curve: Components and their intersections} 

In the Throwing Construction~\ref{throw-const} we identify the curves $E_{3,j}$ with $E_1$, $P_3$ with $V_1$, $T_2^{(2,3)}$
with $V_2$, $F_3$, $T_1^{(2,3)}$, $T_{1,i}^{(3)}$, $T_{2,i}^{(3)}$ with $V_3, \ldots, V_8$, and simultaneously perform four 
$2$-throws. Call
\[ \mathcal{X}_4 := \widetilde{\mathcal{X}},\ P_4 := \widetilde{V}_1,\ T_2^{(2,4)} := \widetilde{V}_2, F_4, T_1^{(2,3)}, 
   T_{1,i}^{(3,4)}, T_{2,i}^{(3,4)}  := \widetilde{V}_3, \ldots, \widetilde{V}_8. \] 
Then $P_4 \cong P_3$, $F_4, T_1^{(2,4)}, T_{1,i}^{(3,4)}, T_{2,i}^{(3,4)}$ are isomorphic to 
$F_3, T_1^{(2,4)}, T_{1,i}^{(3)}, T_{2,i}^{(3)}$, and 
\[   T_2^{(2,4)} \cong T_2^{(2,3)}([p_1,q_1], \ldots, [p_8,q_8]) \] 
where $p_1, q_1, \ldots, p_4, q_4$ are on one vertical fiber, $p_5, q_5, \ldots, p_8, q_8$ are on another vertical fiber, and no
$2$ points $p_i, p_j$ are on the same horizontal fiber. Finally, 
\[ T_{1,j}^{(4)} \cong \mathbb{F}_1,\ T_{2,j}^{(4)} \cong \mathbb{P}^1 \times \mathbb{P}^1. \]

\noindent Next, we describe the configuration of intersection curves on each component. 
\begin{itemize}[leftmargin=*]
\item On $F_4, T_1^{(2,4)}, T_{1,i}^{(3,4)}, T_{2,i}^{(3,4)}$: as on $F_3, T_1^{(2,4)}, T_{1,i}^{(3)}, T_{2,i}^{(3)}$ in the Third
Degeneration.
\item On $P_4$:
\begin{tabular}[t]{l}
with $F_4, T_1^{(2,4)}, T_{1,i}^{(3,4)}, T_{2,i}^{(3,4)}$ as on $P_3$ with $F_3, T_1^{(2,4)}, T_{1,i}^{(3)}, T_{2,i}^{(3)}$ in the \\ 
Third Degeneration, \\
with $T_2^{(2,4)}$ as on $P_3$ with $T_2^{(2,3)}$, no intersections with $T_{1,j}^{(4)}$, \\
a section of $\mathcal{L}(0;-1,0,0,0,[0,0]^2)$, that is $E_{3,1}$, with $T_{2,1}^{(4)}$, \\
similarly with the other $T_{2,j}^{(4)}$.
\end{tabular}
\item On $T_2^{(2,4)}$: 
\begin{tabular}[t]{l}
$\mathcal{O}(1,0)([0,0]^4,[1,1]^4)$ and $\mathcal{O}(1,0)([1,1]^4,[0,0]^4)$ with $P_4$, \\
$\mathcal{O}(0,1)([0,0]^8)$ with $F_4$ and $T_1^{(2,4)}$, \\
$\mathcal{O}(0,0)([-1,1], [0,0]^3,[0,0]^4)$ and $\mathcal{O}(0,0)([0,0]^4,[-1,1],[0,0]^3)$ with $T_{1,1}^{(4)}$, \\
similarly with $T_{1,j}^{(4)}$, $j=2,3,4$, \\
$\mathcal{O}(0,0)([0,-1], [0,0]^3,[0,0]^4)$ and $\mathcal{O}(0,0)([0,0]^4,[0,-1],[0,0]^3)$ with $T_{2,1}^{(4)}$, \\ 
similarly with $T_{2,j}^{(4)}$, $j=2,3,4$, \\
\end{tabular}
\item On $T_{1,j}^{(4)}$: $2$ sections of $\mathcal{L}(1;1)$ with $T_2^{(2,4)}$, and one of $\mathcal{L}(0;-1)$ with 
         $T_{2,j}^{(4)}$.
\item On $T_{2,j}^{(4)}$: $2$ sections of $\mathcal{O}(1,0)$ with $T_2^{(2,4)}$, and one of $\mathcal{O}(0,1)$ with $P_4$,
         $T_{1,j}^{(4)}$.
\end{itemize}

\subsubsection{Throwing the curve: The line bundle and its restrictions}

In the Throwing Construction~\ref{throw-const} identify $\mathcal{L}$ with $\mathcal{L}_3$. Since 
$\mathcal{L}_3.\mathcal{E}_{3,j} = \mathcal{L}_{3|P_3}.\mathcal{E}_{3,j} = 6d - 19m$, we set
\[ a_1 := 3d - \frac{19}{2}m,\ a_2 := 6d - 19m \]
and only consider $d, m \in 2 \cdot \mathbb{N}$. Call $\mathcal{L}_4 := \widetilde{\mathcal{L}}$. Then
\[ \mathcal{L}_{4|P_4} \cong  \mathcal{L}_{3|P_3} \otimes \mathcal{L}(0;(-1)^4,[0,0]^2)^{\otimes a_2} \cong 
   \mathcal{L}(12d - 38m + a; 0^4, [0,0]^2), \]
\[ \mathcal{L}_{4|F_4} \cong \mathcal{L}_{3|F_3},\  \mathcal{L}_{4|T_1^{(2,4)}} \cong \mathcal{L}_{3|T_1^{(2,3)}},  
   \mathcal{L}_{4|T_{1,i}^{(3,4)}} \cong \mathcal{L}_{3|T_{1,i}^{(3)}},\ 
   \mathcal{L}_{4|T_{2,i}^{(3,4)}} \cong \mathcal{L}_{3|T_{2,i}^{(3)}}, \]
\[ \mathcal{L}_{4|T_{1,j}^{(4)}} \cong \mathcal{L}(0;0),\ \mathcal{L}_{4|T_{2,j}^{(4)}} \cong \mathcal{O}(0, \frac{19}{2}m-3d),\ 
   j=1,\ldots,4, \]
\[ \mathcal{L}_{4|T_2^{(2,4)}} \cong \mathcal{O}(0, 2a)([\frac{19}{2}m-3d, \frac{19}{2}m-3d]^8). \]

\subsubsection{Applying the Gluing Lemma not possible}

Consider the strict transforms $E_{4,k}$, $k=1,\ldots,8$ of horizontal fibers through one of the $8$ points blown up on 
$T_2^{(2,4)}$. The $E_{4,k}$ are sections of $\mathcal{O}(0,1)([0,0]^{k-1},[1,0],[0,0]^{8-k})$, and 
\[ E_{4,k}.\mathcal{L}_{4|T_2^{(2,4)}} = - (\frac{19}{2}m-3d) \leq -2 \]
if $6d \leq -4 + 19m$. Lemma~\ref{spec-lem} implies that for these $d,m$ the line bundle $\mathcal{L}_{4|T_2^{(2,4)}}$ is
special.

\noindent Consequently, we cannot apply the Gluing Lemma, and we must perform further throws to obtain new bounds for the
Seshadri constant. 
 
\subsection{The Fifth Degeneration} 

Without changing the assumption $\sqrt{10} m < d < \frac{19}{6} m$ we want to throw the $8$ curves $E_{4,k}$ on 
$T_2^{(2,4)} \subset \mathcal{X}_4$ simultaneously. This is possible because the $E_{4,k}$ are pairwise disjoint. Before
throwing them we modify the line bundle $\mathcal{L}_4$, for the reasons discussed in Remark~\ref{mod-rem}: 
\[ \mathcal{L}_4^\prime := \mathcal{L}_4 \otimes \mathcal{O}_{\mathcal{X}_4}(- (\frac{19}{2}m - 3d) \sum_{j=1}^4 T_{1,j}^{(4)}). \]
The restrictions of $\mathcal{L}_4^\prime$ are the same as those of 
$\mathcal{L}_4$ on all components besides those intersecting one of the $T_{1,j}^{(4)}$, that is $T_{2,j}^{(4)}$, $T_2^{(2,4)}$
and $T_{1,j}^{(4)}$ itself. 

\noindent In the proof of Construction~\ref{throw-const} we showed
\[ \mathcal{O}_{T_{1,j}^{(4)}}(T_{1,j}^{(4)}) \cong \mathcal{O}_{\mathbb{F}_1}(-E_1 - 2F_1 - E_1) \cong \mathcal{L}(-2;0). \]
Furthermore, 
\[ \mathcal{O}_{T_{2,j}^{(4)}}(T_{1,j}^{(4)}) \cong \mathcal{O}_{\mathbb{P}^1 \times \mathbb{P}^1}(0,1),\ 
   \mathcal{O}_{T_2^{(2,4)}}(\sum_{j=1}^4 T_{1,j}^{(4)}) \cong \mathcal{O}(0,0)([-1,1]^8). \]
Consequently,
\[ \mathcal{L}^\prime_{4|T_{1,j}^{(4)}} = \mathcal{L}(19m - 6d;0),\ \mathcal{L}^\prime_{4|T_{2,j}^{(4)}} = \mathcal{O}(0,0),\ 
   \mathcal{L}^\prime_{4|T_2^{(2,4)}} \cong \mathcal{O}(0,2a)([19m-6d,0]^8). \]

\subsubsection{Intersection of curves to throw with other components}

The curves $E_{4,1}, \ldots, E_{4,4}$ intersect $P_4$ in exactly one point, on the component not containing 
$p_1, \ldots, p_4$. The curves $E_{4,5}, \ldots, E_{4,8}$ intersect $P_4$ in exactly one point, on the component not containing 
$p_5, \ldots, p_8$. Finally, each of the $E_{4,k}$ intersects $T_{1,j}^{(4)}$ in exactly one point iff $k \equiv j \mod 4$.

\subsubsection{Throwing the curve: Components and their intersections} 

In the Throwing Construction~\ref{throw-const} we identify the curves $E_{4,k}$ with $E_1$, $T_2^{(2,4)}$ with $V_1$, $P_4$
with $V_2$, $T_{1,j}^{(4)}$, $j=1,\ldots,4$ with $V_3, \ldots, V_6$, 
$F_4$, $T_1^{(2,4)}$, $T_{1,i}^{(3,4)}$, $T_{2,i}^{(3,4)}$, $i=1,2$, $T_{2,j}^{(4)}$, $j=1,\ldots,4$ with $V_7, \ldots, V_{16}$, and simultaneously perform eight $2$-throws. Call
\[ \mathcal{X}_5 := \widetilde{\mathcal{X}},\ T_2^{(2,5)} := \widetilde{V}_1,\ P_5 := \widetilde{V}_2,\ 
   T_{1,j}^{(4,5)} := \widetilde{V}_{2+j},\ j=1,\ldots,4, \] 
\[ F_5, T_1^{(2,5)}, T_{1,i}^{(3,5)}, T_{2,i}^{(3,5)}, T_{2,j}^{(4,5)}   := \widetilde{V}_7, \ldots, \widetilde{V}_{16}, \]
\[ T_{1,k}^{(5)} := \widetilde{T}_{1,k},\ T_{2,k}^{(5)} := \widetilde{T}_{2,k},\ k=1,\ldots,8. \] 
Then $T_2^{(2,5)} \cong T_2^{(2,4)}$, $P_5 \cong P_4([q_1,q_1^\prime], \ldots, [q_8,q_8^\prime])$, where $q_1, \ldots, q_4$
lie on the first intersection curve of $P^4$ with $T_2^{(2,4)}$, and $q_5, \ldots, q_8$
on the second intersection curve, $T_{1,j}^{(4,5)} \cong T_{1,j}^{(4)}([q_{1,j}, q_{1,j}^\prime],[q_{1,4+j}, q_{1,4+j}^\prime])$, 
$T_{1,k}^{(5)} \cong \mathbb{F}_1$, $T_{2,k}^{(5)} \cong \mathbb{P}^1 \times \mathbb{P}^1$, $k=1,\ldots,8$, 
and finally $\widetilde{V}_l \cong V_l$, $l=7,\ldots,16$.

\noindent Next, we describe the configuration of intersection curves on each component. 
\begin{itemize}[leftmargin=*]
\item On $F_5, T_1^{(2,5)}, T_{1,i}^{(3,5)}, T_{2,i}^{(3,5)}, T_{2,j}^{(4,5)}$ as on $F_4$, $T_1^{(2,4)}$, $T_{1,i}^{(3,4)}$, 
$T_{2,i}^{(3,4)}$, $T_{2,j}^{(4)}$ in the Fourth Degeneration.
\item On $T_2^{(2,5)}$: 
\begin{tabular}[t]{l}
with $P_5, F_5, T_1^{(2,5)}, T_{1,j}^{(4,5)}, T_{2,j}^{(4,5)}$ as with $P_4, F_4, T_1^{(2,4)}, T_{1,j}^{(4)}, T_{2,j}^{(4)}$ in the\\
Fourth Degeneration, with $T_{2,k}^{(5)}$: $E_{5,k}$, $k=1,\ldots,8$.
\end{tabular}
\item On $P_5$: 
\begin{tabular}[t]{l}
with $F_5, T_1^{(2,5)}, T_{1,i}^{(3,5)}, T_{2,i}^{(3,5)},T_{1,j}^{(4,5)},  T_{2,j}^{(4,5)}$ as with $F_4$, $T_1^{(2,4)}$, 
$T_{1,i}^{(3,4)}$, $T_{2,i}^{(3,4)}$, \\
$T_{1,j}^{(4)}$, $T_{2,j}^{(4)}$ in the Fourth Degeneration, \\
with $T_2^{(2,5)}$: $\mathcal{L}(2;1^4,[1,0],[0,0],[1,1]^4,[0,0]^4)$ and \\
$\mathcal{L}(2;1^4,[0,0],[1,0],[0,0]^4,[1,1]^4)$, \\
with $T_{1,k}^{(5)}$: $\mathcal{L}(0;0^4,[0,0]^2,[1,1]^4,[0,0]^{k-1},[-1,1],[0,0]^{8-k})$, \\  
with $T_{2,k}^{(5)}$: $\mathcal{L}(0;0^4,[0,0]^2,[1,1]^4,[0,0]^{k-1},[0,-1],[0,0]^{8-k})$.
\end{tabular}
\item On $T_{1,j}^{(4,5)}$: 
\begin{tabular}[t]{l}
with $T_{2,j}^{(4,5)}$ as with $T_{2,j}^{(4)}$ in the Fourth Degeneration, \\
with $T_{1,j}^{(5)}$: $\mathcal{L}(0;0,[-1,1],[0,0])$, with $T_{1,j+4}^{(5)}$: $\mathcal{L}(0;0,[0,0],[-1,1])$, \\
with $T_{2,j}^{(5)}$: $\mathcal{L}(0;0,[0,-1],[0,0])$, with $T_{2,j+4}^{(5)}$: $\mathcal{L}(0;0,[0,0],[0,-1])$, \\
with $T_2^{(2,5)}$: $\mathcal{L}(1;1,[1,1],[0,0])$ and $\mathcal{L}(1;1,[0,0],[1,1])$.
\end{tabular}
\item On $T_{1,k}^{(5)}$: 
\begin{tabular}[t]{l}
with $P_5$: $\mathcal{L}(1;1)$, with $T_{1,j}^{(4,5)}$: $\mathcal{L}(1;1)$ if $j \equiv k \mod 4$, with $T_{1,k}^{(5)}$: 
$\mathcal{L}(0;-1)$
\end{tabular}
\item On $T_{2,k}^{(5)}$: 
\begin{tabular}[t]{l}
with $P_5$: $\mathcal{O}(0,1)$, with $T_{1,j}^{(4,5)}$: $\mathcal{O}(0,1)$ if $j \equiv k \mod 4$, \\
with $T_{2,k}^{(5)}$ and $T_2^{(2,5)}$: $\mathcal{O}(1,0)$.
\end{tabular}
\end{itemize}

\subsubsection{Throwing the curve: The line bundle and its restrictions}

In the Throwing Construction~\ref{throw-const} we identify $\mathcal{L}$ with $\mathcal{L}_4^\prime$. Since 
$\mathcal{L}_4^\prime.\mathcal{E}_{4,k} = \mathcal{L}_{4|P_4}^\prime.\mathcal{E}_{4,k} = 6d - 19m$, we set
\[ a_1 := 3d - \frac{19}{2}m,\ a_2 := 6d - 19m \]
and only consider $d, m \in 2 \cdot \mathbb{N}$. Call $\mathcal{L}_5 := \widetilde{\mathcal{L}}$. Then:
\[ \mathcal{L}_{5|F_5} \cong \mathcal{L}_{4|F_4},\ \mathcal{L}_{5|T_1^{(2,5)}} \cong \mathcal{L}_{4|T_1^{(2,4)}},\ 
   \mathcal{L}_{5|T_{1,i}^{(3,5)}} \cong \mathcal{L}_{4|T_{1,i}^{(3,4)}},\ 
   \mathcal{L}_{5|T_{2,i}^{(3,5)}} \cong \mathcal{L}_{4|T_{2,i}^{(3,4)}},\ i=1,2, \]
\[ \mathcal{L}_{5|T_{2,j}^{(4,5)}} \cong \mathcal{L}_{4|T_{2,j}^{(4)}} = \mathcal{O}(0,0),\ j=1,2,3,4, \]
\[ \mathcal{L}_{5|T_2^{(2,5)}} \cong \mathcal{O}(0,2a-8(19m-6d))), 
   \mathcal{L}_{5|P_5} \cong \mathcal{L}_{4|P_4}([\frac{19}{2}m-3d,\frac{19}{2}m-3d]^8),\ \]
\[ \mathcal{L}_{5|T_{1,j}^{(4,5)}} \cong \mathcal{L}(19m-6d;0, [\frac{19}{2}m-3d,\frac{19}{2}m-3d]^2),\ j=1,2,3,4, \]
\[ \mathcal{L}_{5|T_{1,k}^{(5)}} \cong \mathcal{L}(0;0),\ \mathcal{L}_{5|T_{1,k}^{(5)}} \cong \mathcal{O}(0,\frac{19}{2}m-3d),\ 
   k=1, \ldots, 8. \] 

\subsubsection{Applying the Gluing Lemma}

In the setting of Gluing Lemma~\ref{glue-lem} we identify $V_1$ with $T_{1,1}^{(3,5)} \cup T_{1,2}^{(3,5)}$, $V_2$ with 
$T_1^{(2,5)}$, $V_3$ with $T_{2,1}^{(3,5)} \cup T_{2,2}^{(3,5)}$, $V_4$ with $\bigcup_{j=1}^4 T_{2,j}^{(4,5)}$, $V_5$ with 
$T_2^{(2,5)}$,  $V_6$ with $\bigcup_{j=1}^4 T_{1,j}^{(4,5)}$, $V_7$ with $\bigcup_{k=1}^8 T_{2,k}^{(5)}$, $V_8$ with 
$\bigcup_{k=1}^8 T_{1,k}^{(5)}$, $V_9$ with $F_5$ and
$V_{10}$ with $P_5$. Then we check when the relevant cohomology groups vanish. 
\begin{enumerate}[leftmargin=*]
\item $H^1(T_{1,i}^{(3,5)}, \mathcal{L}_{5|T_{1,i}^{(3,5)}}) = 0$ and 
$H^1(T_{1,i}^{(3,5)}, \mathcal{L}_{5|T_{1,i}^{(3,5)}} \otimes \mathcal{O}_{T_{1,i}^{(3,5)}}(-T_1^{(2,5)})) = 0$, for the intersection
with $V_2$: true because $T_{1,i}^{(3,5)} \cong \mathbb{P}^2(p)$, $\mathcal{L}_{5|T_{1,i}^{(3,5)}} \cong \mathcal{L}(0;0)$ and
$\mathcal{O}_{T_{1,i}^{(3,5)}}(-T_1^{(2,5)}) \cong \mathcal{L}(-1;-1)$.

\item $H^1(T_1^{(2,5)}, \mathcal{L}_{5|T_1^{(2,5)}}) = 0$: Since 
$\mathcal{L}_{5|T_1^{(2,5)}} \cong \mathcal{L}(5m-\frac{3}{2}d-a; 5m-\frac{3}{2}d-a, [0,0]^2)$ this follows from Harbourne's
Criterion~\ref{Har-crit} if $a < 5m-\frac{3}{2}d$.

\item $H^1(T_{2,i}^{(3,5)}, \mathcal{L}_{5|2_{2,i}^{(3,5)}}) = 0$ and 
$H^1(T_{2,i}^{(3,5)}, \mathcal{L}_{5|T_{2,i}^{(3,5)}} \otimes \mathcal{O}_{T_{2,i}^{(3,5)}}(-W_2)) = 0$, for the intersection
with $W_2$: Since $\mathcal{L}_{5|T_{2,i}^{(3,5)}} \cong \mathcal{O}(0,5m-\frac{3}{2}d-a)$ and the intersection curves with
$W_2$ add up to a section of $\mathcal{O}(1,1)$ the vanishings follow if $a < 5m-\frac{3}{2}d$, using Prop.~\ref{negexc-prop}
for the second cohomology group.

\item $H^1(T_{2,j}^{(4,5)}, \mathcal{L}_{5|T_{2,j}^{(4,5)}}) = 0$: true because $\mathcal{L}_{5|T_{2,j}^{(4,5)}} =  \mathcal{O}(0,0)$.
Note that $V_4 \cap W_3 = \emptyset$.

\item $H^1(T_2^{(2,5)}, \mathcal{L}_{5|T_2^{(2,5)}}) = 0$ and 
$H^1(T_2^{(2,5)}, \mathcal{L}_{5|T_2^{(2,5)}} \otimes \mathcal{O}_{T_2^{(2,5)}}(-W_4)) = 0$, for the intersection
with $W_4$: Since $\mathcal{L}_{5|T_2^{(2,5)}} \cong \mathcal{O}(0, 2a-8(19m-6d))([0,0]^8)$, the first vanishing holds if 
$a \geq 4(19m-6d)$. 

\noindent For
$\mathcal{L}_{5|T_2^{(2,5)}} \otimes \mathcal{O}_{T_2^{(2,5)}}(-W_4) \cong \mathcal{O}(0, 2a-8(19m-6d)-1)([0,1]^8)$ we apply
Theorem~\ref{curvevan-crit} with $C = C_1 \cup C_2$,
\[ C_1\ \mathrm{section\ of\ } \mathcal{O}(0,1)([1,1]^4,[0,0]^4),\ 
                                 C_2\ \mathrm{section\ of\ } \mathcal{O}(0,1)([0,0]^4,[1,1]^4): \]
\[ \left[ K_{T_2^{(2,5)}} \otimes \mathcal{O}_{T_2^{(2,5)}}(C) \right].C_i = \mathcal{O}(-2,0)([0,0]^8).C_i = 0, \]
\[ \left[ \mathcal{L}_{5|T_2^{(2,5)}} \otimes \mathcal{O}_{T_2^{(2,5)}}(-W_4)) \right].C_i =  2a-8(19m-6d)-1-4 > 0 \]
if $a > 4(19m-6d)+2$, 
\begin{eqnarray*}
\lefteqn{H^1(T_2^{(2,5)}, \mathcal{L}_{5|T_2^{(2,5)}} \otimes \mathcal{O}_{T_2^{(2,5)}}(-W_4) \otimes 
                                       \mathcal{O}_{T_2^{(2,5)}}(-C)) =} \\ 
   & = & H^1(T_2^{(2,5)}, \mathcal{O}(0,2a-8(19m-6d)-1-2)([-1,0]^8) = 0, 
\end{eqnarray*}
if $a > 4(19m-6d)+2$, using Prop.~\ref{negexc-prop}. 

\item $H^1(T_{1,j}^{(4,5)}, \mathcal{L}_{5|T_{1,j}^{(4,5)}}) = 0$ and 
$H^1(T_{1,j}^{(4,5)}, \mathcal{L}_{5|T_{1,j}^{(4,5)}} \otimes \mathcal{O}_{T_{1,j}^{(4,5)}}(-W_5)) = 0$, for the intersection
with $W_5$: By Cremona Transformation I we can write 
$\mathcal{L}_{5|T_{1,j}^{(4,5)}} \cong \mathcal{L}(19m-6d;0, [\underline{\underline{\frac{19}{2}m-3d}},
                                                                                                 \underline{\frac{19}{2}m-3d}]^2)$ as
\[ \mathcal{L}_{5|T_{1,j}^{(4,5)}} \cong \mathcal{L}(\frac{19}{2}m-3d;[0,0]^2, \frac{19}{2}m-3d). \]
Similarly,
\[ \mathcal{L}_{5|T_{1,j}^{(4,5)}} \otimes \mathcal{O}_{T_{1,j}^{(4,5)}}(-W_5)) \cong 
   \mathcal{L}(\frac{19}{2}m-3d-1;[0,0],[0,-1],\frac{19}{2}m-3d-1) \]
because the intersection curves on $T_{1,j}^{(4,5)}$ with $W_5$ add up to $\mathcal{L}(2,1,[1,1]^2)$ which is written as 
$\mathcal{L}(1;[0,0],[0,1],1)$ after the Cremona transformation.

\noindent Then both vanishings follow from Harbourne's Criterion~\ref{Har-crit} and Prop.~\ref{negexc-prop} if 
$19m \geq 6d$.

\item $H^1(T_{2,k}^{(5)}, \mathcal{L}_{5|T_{2,k}^{(5)}}) = 0$ and 
$H^1(T_{2,k}^{(5)}, \mathcal{L}_{5|T_{2,k}^{(5)}} \otimes \mathcal{O}_{T_{2,k}^{(5)}}(-W_6)) = 0$, for the intersection
with $W_6$: Since $\mathcal{L}_{5|T_{2,k}^{(5)}} \cong \mathcal{O}(0,\frac{19}{2}m-3d)$ and the intersection curves with 
$W_6$ add up to a section of $\mathcal{O}(1,1)$, this is true if $\frac{19}{2}m-3d \geq 0$.

\item $H^1(T_{1,k}^{(5)}, \mathcal{L}_{5|T_{1,k}^{(5)}}) = 0$ and 
$H^1(T_{1,k}^{(5)}, \mathcal{L}_{5|T_{1,k}^{(5)}} \otimes \mathcal{O}_{T_{1,k}^{(5)}}(-W_7)) = 0$, for the intersection
with $W_7$: Since $\mathcal{L}_{5|T_{1,k}^{(5)}} \cong \mathcal{L}(0;0)$ and the intersection curves with 
$W_7$ add up to a section of $\mathcal{L}(1;0)$, this is true.

\item $H^1(F_5, \mathcal{L}_{5|F_5}) = 0$ and 
$H^1(F_5, \mathcal{L}_{5|F_5} \otimes \mathcal{O}_{F_5}(-W_8)) = 0$, for the intersection
with $W_8$: We start with four Cremona transformations on $\mathcal{L}_{5|F_5} \cong \mathcal{L}_{3|F_3}$:
\[ \begin{array}{lll}
   \frac{25}{2}d-39m-3a; & 0, & (\underline{\underline{d-3m-a}})^6, [7d-22m-a,7d-22m-a],  \\
                                      &     &                            7d-22m-a,  \underline{5m-\frac{3}{2}d-a} \\
   \frac{49}{2}d-77m-3a; & 0, & (13d-41m-a)^2,  (\underline{\underline{d-3m-a)}}^4,  [7d-22m-a,7d-22m-a], \\
                          & & 7d-22m-a, \underline{\frac{21}{2}d-33m-a}), \\
   \frac{73}{2}d-115m-3a; & 0, & (13d-41m-a)^4,  (\underline{\underline{d-3m-a)}}^2,  [7d-22m-a,7d-22m-a], \\
                          & & 7d-22m-a, \underline{\frac{45}{2}d-71m-a}), \\
   \frac{97}{2}d-153m-3a; & 0, & (13d-41m-a)^6, [\underline{7d-22m-a},\underline{7d-22m-a}], \\
                          & & \underline{7d-22m-a}, \frac{69}{2}d-109m-a), \\
   76d-240m-3a; & 0, & (13d-41m-a)^6,  [\frac{69}{2}d-109m-a, \frac{69}{2}d-109m-a]^2).
\end{array} \]

\noindent These transformations are possible because before and after the first three Cremona transformations the infinitely 
near point is directed to the third base point: this situation is described in Cremona transformation III.
In the last transformation, the last point blown up becomes infinitely near, as described in Cremona transformation I. 

\noindent  After the Cremona transformations the intersection curves of $F_5$ with the other components can be written as
sections of the following line bundles:

\begin{tabular}[t]{l}
$\mathcal{L}(3;2,1^6,[1,1],[1,1])$ with $P_5$, $\mathcal{L}(0;-1,0^6,[0,0],[0,0])$ with $T_2^{(2,5)}$, \\
no intersection with $T_1^{(2,5)}$, $T_{1,j}^{(4,5)}$, $T_{2,j}^{(4,5)}$, $T_{1,k}^{(5)}$, $T_{2,k}^{(5)}$, \\
$\mathcal{L}(0;0,0^6,[-1,1],[0,0])$ with $T_{1,1}^{(3,5)}$ and $\mathcal{L}(0;0,0^6,[0,0],[-1,1])$ with $T_{1,2}^{(3,5)}$, \\
$\mathcal{L}(6;0,1^6,[3,2],[3,3])$ with $T_{2,1}^{(3,5)}$ and $\mathcal{L}(6;0,1^6,[3,3],[3,2])$ with $T_{2,2}^{(3,5)}$. 
\end{tabular}

\noindent As in~\ref{glue3-ssec}(5) we can forget the point with multiplicity $0$ and study the line 
bundle 
 \[ \widetilde{\mathcal{L}} := \mathcal{L}(76d-240m-3a; (13d-41m-a)^6, [\frac{69}{2}d-109m-a, \frac{69}{2}d-109m-a]^2) \]
on 
$\widetilde{F} = \mathbb{P}^2(p_1, \ldots, p_6, [p_7,p_8],[p_9,p_{10}])$. As in~\ref{glue3-ssec}(5) the surface 
$\widetilde{F}$ is strongly anti-canonical, and
\[ \widetilde{\mathcal{L}}.K_{\widetilde{F}} = -12d + 38m -a < 0 \]
if $a > 2(19m - 6d)$. Finally, $\widetilde{\mathcal{L}}$ is standard if $d < \frac{136}{43}m$ and $a \leq \frac{69}{2}d-109m$:
Then, $0 \leq \frac{69}{2}d-109m-a < 13d-41m-a$,  $\frac{69}{2}d-109m-a < \frac{76}{3}d-80m$ and
\[  76d-240m-3a > 39d - 123m - 3a\ \Leftrightarrow 37d > 117m \]
holds because $\frac{117}{37} < \sqrt{10}$. Hence we can apply Harbourne's Criterion~\ref{Har-crit}.  

\noindent As in~\ref{glue3-ssec}(5) the intersection curves with $W_8$ add up to a section of $\mathcal{L}(2;-1,0^6,[1,2],1,0)$.
After the four Cremona transformations this line bundle can be written as
\[ \mathcal{O}_{F_5}(W_8) \cong \mathcal{L}(12;-1,2^6,[5,6],[5,6]). \]
We want to argue as in~\ref{glue3-ssec}(5) and apply Theorem~\ref{curvevan-crit} on 
\[ \mathcal{L}^\prime := \mathcal{L}_{5|F_5} \otimes \mathcal{L}(-12;1,(-2)^6,[-5,-6],[-5,-6]) \]
and the cubic $C$ in $\mathcal{L}(3;2,1^6,[1,1]^2)$. This is possible because $(K_{F_5}+C).C = -2$, and
$\mathcal{L}^\prime.C = 12d-38m+a-4 > -2$ if
\[ a > 2(19m-6d)+2. \]
Under this assumption we only have to show
\[ H^1(F_5, \mathcal{L}^\prime \otimes \mathcal{O}_{F_5}(-C)), \]
and this can be done on $\widetilde{F}$ as above, after using Prop.~\ref{negexc-prop} and assuming the same inequalities.

\item $H^1(P_5, \mathcal{L}_{5|P_5}) = 0$ and 
$H^1(P_5, \mathcal{L}_{5|P_5} \otimes \mathcal{O}_{P_5}(-W_9)) = 0$, for the intersection
with $W_9$: Since $\mathcal{L}_{5|P_5}$ is isomorphic to
\[ \mathcal{L}_{4|P_4}([\frac{19}{2}m-3d,\frac{19}{2}m-3d]^8) \cong 
   \mathcal{L}(a-2(19m-6d);0^4,[0,0]^2,[\frac{19}{2}m-3d,\frac{19}{2}m-3d]^8), \]
we can forget all points with multiplicity $0$ and work with the line bundle 
$\widetilde{\mathcal{L}} := \mathcal{L}(a-2(19m-6d);[\frac{19}{2}m-3d,\frac{19}{2}m-3d]^8)$ on 
$\widetilde{\mathbb{P}} := \mathbb{P}^2([p_1,q_1], \ldots, [p_8,q_8])$. Here $p_1, \ldots, p_4$ lie on the strict transform of a 
conic $C_1$ in $\mathcal{L}(2;[1,1]^4,[0,0]^4)$, whereas $p_5, \ldots, p_8$ lie on the strict transform of a 
conic $C_2$ in $\mathcal{L}(2;[0,0]^4,[1,1]^4)$. These two conics intersect in $4$ points distinct from any point blown up on
$\widetilde{\mathbb{P}}$. The infinitely near points $q_1, \ldots, q_8$ are tangent to $C_1$ resp. $C_2$.

\noindent Set $C = C_1 \cup C_2$. Then:
\[ \widetilde{\mathcal{L}}_{|\widetilde{\mathbb{P}}} \otimes \mathcal{O}_{\widetilde{\mathbb{P}}}(-iC) \cong
   \mathcal{L}(a-2(19m-6d)-4i;[\frac{19}{2}m-3d-i,\frac{19}{2}m-3d-i]^8) \]
and
\begin{eqnarray*}
\left[ \widetilde{\mathcal{L}}_{|\widetilde{\mathbb{P}}} \otimes \mathcal{O}_{\widetilde{\mathbb{P}}}(-iC) \right].C_1 & = & 
2 \cdot (a-2(19m-6d)) - 8i - 8 \cdot (\frac{19}{2}m-3d-i) \\
 & = & 2a - 8(\frac{19}{2}m-3d-i) = 
           \left[ \widetilde{\mathcal{L}}_{|\widetilde{\mathbb{P}}} \otimes \mathcal{O}_{\widetilde{\mathbb{P}}}(-iC) \right].C_2.
\end{eqnarray*}
On the other hand,
$\left[ K_{\widetilde{\mathbb{P}}} \otimes  \mathcal{O}_{\widetilde{\mathbb{P}}}(C) \right].C_i = \mathcal{L}(1;[0,0]^8).C_i = 2$. 
Consequently we can apply Theorem~\ref{curvevan-crit} iteratively for $i=0,1 \ldots, \frac{19}{2}m-3d$, if 
$a > 4(19m-6d)+1$. Finally
\[ \widetilde{\mathcal{L}}_{|\widetilde{\mathbb{P}}} \otimes \mathcal{O}_{\widetilde{\mathbb{P}}}(-(\frac{19}{2}m-3d)C) \cong
   \mathcal{L}(a - 4(19m-6d);[0,0]^8) \]
is non-special. 

\noindent For the surjectivity on $V_{10} \cap W_9$ the intersection curves on $P_5$ add up to a section of 
$\mathcal{L}(5;1^4,[1,1]^2,[1,1]^8)$. Prop.~\ref{negexc-prop} tells us that it will be enough to show
\[ H^1(\widetilde{\mathbb{P}}, \widetilde{\mathcal{L}}_{|\widetilde{\mathbb{P}}} \otimes \mathcal{L}(-5;[0,0]^8)) = 0. \]
Repeating the calculations above shows that we can still apply Theorem~\ref{curvevan-crit} iteratively for 
$i=0,1 \ldots, \frac{19}{2}m-3d$ and obtain a non-special line bundle if $a > 4(19m-6d)+5$. 
\end{enumerate}

\subsubsection{Bounds}

We can apply the Gluing Lemma~\ref{glue-lem} if the following inequalities are satisfied:
\[ d > \sqrt{10}m,\ a < 5m - \frac{3}{2}d,\ a > 4(19m-6d) + 2,\ 19m - 6d \geq 0,\ d < \frac{136}{43}m,\ 
   a \leq \frac{69}{2}d-109m. \]
Since $\frac{69}{2}d-109m < 5m - \frac{3}{2}d \Leftrightarrow 36d < 114m$, $\frac{136}{43} < \frac{19}{6}$ and
\[ 4(19m-6d) + 2 < \frac{69}{2}d-109m \Leftrightarrow 185m+2 < \frac{117}{2}d  \Leftrightarrow 
   \frac{370}{117}m + \frac{4}{117} < d, \]
we can apply Theorem~\ref{Ineq-thm}
with $\mu = \frac{330}{117}$.
\begin{prop}
The multi-point Seshadri constant of $10$ points in general position is bounded from below by
\[ \epsilon(\mathbb{P}^2, \mathcal{O}_{\mathbb{P}^2}(1); p_1, \ldots, p_{10}) \geq \frac{117}{330}. \]
\end{prop}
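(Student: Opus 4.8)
The plan is to apply Theorem~\ref{Ineq-thm} to the Fifth Degeneration $f:\mathcal{X}_5 \to \Delta$ constructed above, whose central fibre decomposes into the component types $P_5$, $F_5$, $T_1^{(2,5)}$, $T_2^{(2,5)}$, $T_{1,i}^{(3,5)}$, $T_{2,i}^{(3,5)}$, $T_{1,j}^{(4,5)}$, $T_{2,j}^{(4,5)}$, $T_{1,k}^{(5)}$, $T_{2,k}^{(5)}$, each a blow-up of $\mathbb{P}^2$ or of a Hirzebruch surface. For the line bundle $\mathcal{L}=\mathcal{L}_5$, whose degree and multiplicities on every component depend linearly on $d,m,a$, the hypotheses of Theorem~\ref{Ineq-thm} require $H^1(V_i,\mathcal{L}_{5|V_i})=0$ and $H^1(V_i,\mathcal{L}_{5|V_i}\otimes\mathcal{O}(-C_i))=0$ for each component, subject to a finite set of weak linear inequalities. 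These vanishings are exactly what is checked component by component in the preceding items (1)--(10) of ``Applying the Gluing Lemma'', via Harbourne's Criterion~\ref{Har-crit}, the curve-vanishing Theorem~\ref{curvevan-crit}, and Proposition~\ref{negexc-prop} (the latter securing the surjectivity of the difference maps through Remark~\ref{diffmap-rem}).

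Next I would collect the inequalities output by those ten verifications into one system in $(d,m,a)$,
\[ d > \sqrt{10}\,m,\quad a < 5m-\tfrac{3}{2}d,\quad a > 4(19m-6d)+2,\quad 19m-6d \geq 0,\quad d < \tfrac{136}{43}m,\quad a \leq \tfrac{69}{2}d-109m, \]
which cut out a closed convex polyhedron $P\subset\mathbb{R}^3$. I would then check that its projection $P'\subset\mathbb{R}^2$ onto the $(d,m)$-coordinates is unbounded in both $d$ and $m$: the upper $a$-bounds strictly exceed the lower $a$-bound along an infinite cone, so $P'$ contains rays going to infinity, which is precisely the hypothesis needed to run Theorem~\ref{Ineq-thm}.

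The decisive step is to determine $\mu=\inf\{d/m:(d,m)\in P'\}$, the least slope of a ray in $P'$. A pair $(d,m)$ lies in $P'$ exactly when the admissible interval for $a$ is nonempty, so the lower boundary of $P'$ is governed by comparing the binding lower and upper bounds on $a$. Since $\tfrac{69}{2}d-109m < 5m-\tfrac{3}{2}d$ (equivalently $36d<114m$) and $\tfrac{136}{43}<\tfrac{19}{6}$, the active comparison is $4(19m-6d)+2 < \tfrac{69}{2}d-109m$, and solving this facet relation for the limiting slope yields $\mu=\tfrac{330}{117}$, hence the reciprocal $\tfrac{117}{330}$. This bookkeeping—deciding which of the many inequalities from items (1)--(10) actually bound the feasible cone from below—I expect to be routine once the vanishings are in hand; the genuine obstacle is the earlier, delicate verification on $F_5$ and $P_5$, where several Cremona transformations (Transformations~I and~III of Section~\ref{Crem-sec}) and iterated applications of Theorem~\ref{curvevan-crit} along the conics $C_1,C_2$ must be carried out before Harbourne's Criterion applies. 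Finally, with $\mu=\tfrac{330}{117}$, Theorem~\ref{Ineq-thm} furnishes non-special line bundles $\mathcal{L}(d;m^{10})$ for $d,m\gg 0$ with $d/m$ arbitrarily close to $\mu$ from above, and—these being non-empty by the Riemann--Roch/effectivity clause of Theorem~\ref{Ineq-thm}—the concluding Proposition of Section~\ref{CM-method-sec} relating $\mu$ to the Seshadri constant gives $\epsilon(\mathbb{P}^2,\mathcal{O}_{\mathbb{P}^2}(1);p_1,\ldots,p_{10}) \geq 1/\mu = \tfrac{117}{330}$.
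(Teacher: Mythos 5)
Your proposal follows the paper's own route exactly: assemble the linear inequalities produced by the ten component-by-component verifications of the Gluing Lemma, observe that the resulting polyhedron projects to an unbounded region in the $(d,m)$-plane, identify the binding facet, and feed the resulting $\mu$ into Theorem~\ref{Ineq-thm} and the subsequent Proposition on Seshadri constants. The one point to correct is the arithmetic at the decisive step: the facet relation $4(19m-6d)+2 < \tfrac{69}{2}d-109m$ is equivalent to $185m+2 < \tfrac{117}{2}d$, i.e.\ $d > \tfrac{370}{117}m + \tfrac{4}{117}$, so the limiting slope is $\mu = \tfrac{370}{117}$, not $\tfrac{330}{117}$; the figure $\tfrac{117}{330}$ (which you inherit from a typo in the statement, and which the paper itself contradicts in its own displayed computation and in its abstract) cannot be right, since $\tfrac{117}{330} > \tfrac{1}{\sqrt{10}}$ would exceed the trivial upper bound for this Seshadri constant. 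The correct conclusion is $\epsilon(\mathbb{P}^2, \mathcal{O}_{\mathbb{P}^2}(1); p_1, \ldots, p_{10}) \geq \tfrac{117}{370}$.
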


\section{Algorithmic aspects}  \label{Alg-sec}

\noindent We do not stop with the Fifth Degeneration because a new idea is needed but because the amount of data we need
to keep track of becomes unmanagable by hand. We illustrate this by identifying the next candidates of curves to throw.

\subsection{The next degeneration: Curves to throw}

Assume that $\sqrt{10}m < d < \frac{370}{117}m$. Then $\frac{69}{2}d-109m < 4(19m-6d)$ and we cannot choose $a$ such that
\[ 4(19m-6d) < a < \frac{69}{2}d-109m. \]
We also have $4(19m-6d) < 13d-41m \Leftrightarrow 117m<37d$ because $\frac{117}{37}<\sqrt{10}$. So let us assume from
now on
\[ \frac{69}{2}d-109m < 4(19m-6d) < a < 13d-41m. \] 
Furthermore we modify the line bundle $\mathcal{L}_5$ to
\[ \mathcal{L}^\prime := \mathcal{L}_5 \otimes \mathcal{O}_{\mathcal{X}_5}((\frac{69}{2}d-109m-a) \sum T_{1,i}^{(3,5)}), \]
for the reasons discussed in Remark~\ref{mod-rem}. Using 
\[ \mathcal{O}_{F_5}(T_{1,1}^{(3,5)}) \cong \mathcal{L}(0;0,0^6,[-1,1],[0,0])\ \mathrm{and\ } 
\mathcal{O}_{F_5}(T_{1,2}^{(3,5)}) \cong \mathcal{L}(0;0,0^6,[0,0],[-1,1]) \] 
we obtain
\[ \mathcal{L}^\prime \cong \mathcal{L}(76d-240m-3a; (13d-41m-a)^6, [0, 69d-218m-2a]^2). \]
Consequently, the two $(-1)$-curves $E_{5,1}$ in $\mathcal{L}(0;0,0^6,[0,-1],[0,0])$ and 
$E_{5,2}$ in $\mathcal{L}(0;0,0^4,[0,0],[0,-1])$ are our next candidates for curves to throw. We can throw them simultaneously
because they do not intersect on $F_5$.

\noindent The intersection curves of $F_5$ with the other components add up to a section of 
$\mathcal{L}(15;1,3^6,[6,7],[6,7])$. Hence $E_{5,1}$ resp. $E_{5,2}$ intersect the other components in $7$ points (if they are
different). So we must perform two $7$-throws.

\subsection{Non-termination of algorithm}

The increasing amount of bookkeeping might be tedious to cope with by hand but would not pose any difficulties for a
computer, at least in the next steps. On the other hand it is also interesting to prove general statements which ensure that
the algorithm never terminates. In the following we specify and shortly discuss some issues related to that aim.

\subsubsection{Existence of curves to be thrown}

If a line bundle $\mathcal{L}$ is special on $\mathbb{P}^2$ blown up in several points in general position, the existence of a
$(-1)$-curve intersecting $\mathcal{L}$ sufficiently negative is predicted by the Harbourne-Hirschowitz Conjecture. But in the
degenerations constructed above we already observe components of the central fiber which are isomorphic to 
$\mathbb{P}^2$ blown up in points in rather special positions. In particular, we must deal with omnipresent infinitely
near points. 

\noindent Nevertheless we always found curves to throw among the $(-1)$-curves of the exceptional configuration in which
the restriction of $\mathcal{L}$ is described. A better understanding of why they exist would be desirable.

\subsubsection{Transversal intersections}

The curves to throw should intersect the other components of the central fiber transversally. Otherwise, the Throwing 
Construction~\ref{throw-const} is not applicable, or must be extended to a much more complicated situation. 

\noindent In the above degenerations transversality is always a consequence of sufficiently general position of blown up
points. But when continuing the algorithm more intricate configurations might occur. 

\subsubsection{Modification of degenerated line bundle}

We modified the line bundle on the central fiber in the First, Third and Fifth Degeneration, and we will also need to do it in a
possible Sixth Degeneration, see the section before. The modifications can always be justified as in Remark~\ref{mod-rem},
and use analogous components.

\subsubsection{Position of points} 

Even if the blown up points on a component of the central fiber are not in general position
they should not lie in a too special configuration. In the above degenerations the necessary generality can always be deduced
from the general position of the $10$ points blown up in the beginning.

\subsubsection{Verifying non-specialty}

In all cases in which Harbourne's Criterion~\ref{Har-crit} does not work we were able to simplify the situation with
Criterion~\ref{curvevan-crit}. This was possible because lots of the blown up points in the considered components of the
central fiber lie on simple curves. This is inherent to the algorithm, because new points always occur on intersection
curves with other components. 

\noindent When we applied Harbourne's Criterion~\ref{Har-crit} we did not motivate  the choice of Cremona transformations to
standardize the line bundle. Harbourne~\cite{Har85} developped an algorithm for standardization, for fixed degree and 
multiplicities. But in our case, degree and multiplicities depend on the parameters $d, m, a$, and which Cremona
transformations lead to a standardized line bundle, depends on linear inequalities between these parameters. On the other
hand these linear inequalities are exactly what we want to find. 

\noindent Therefore, a more systematic approach tries different inequalities, their effect on the standardization, and finally
decide which set of linear inequalities gives the best bound in the end. But this is very tedious. 

\subsection{Future prospects}

Besides trying to find bounds for the Seshadri constant of $10$ points on $\mathbb{P}^2$ we could also start the algorithm to
find bounds for the Seshadri constant of $11, 12, \ldots$ points in general position on $\mathbb{P}^2$. But after some steps
we will encounter the difficulties described above in all these cases. 

\noindent On the other hand overcoming these difficulties only requires careful bookkeeping and systematic trial-and-error.
These are tasks perfectly fit to a computer. So if we want to find new bounds for Seshadri constants, we should first program 
a package of tools which allow us to navigate through the data accumulated by the algorithm, without too much
effort.  



\end{document}